\let\c@table\c@figure
\definecolor{lightgray}{rgb}{0.8, 0.8, 0.8}
\definecolor{darkgray}{rgb}{0.7, 0.7, 0.7}
\definecolor{darkblue}{rgb}{0, 0, .4}
\definecolor{dark-gray}{gray}{0.35}
\newcounter{todocounter}
\theoremstyle{plain}
\newtheorem{theorem}{Theorem}[section]
\newtheorem{proposition}[theorem]{Proposition}
\newtheorem{lemma}[theorem]{Lemma}
\newtheorem{conjecture}[theorem]{Conjecture}
\newtheorem{question}[theorem]{Question}
\newfont{\footsc}{cmcsc10 at 8truept}
\newfont{\footbf}{cmbx10 at 8truept}
\newfont{\footrm}{cmr10 at 10truept}
\renewenvironment{abstract}%
                {
                  \begin{list}{}%
                     {\setlength{\rightmargin}{1in}%
                      \setlength{\leftmargin}{1in}}%
                   \item[]\ignorespaces\begin{small}}%
                 {\end{small}\unskip\end{list}}
\newcommand{\Av}{\operatorname{Av}}
\newcommand{\Cl}{\operatorname{Cl}}
\newcommand{\ds}{\displaystyle}
\newcommand{\C}{\mathcal{C}}
\newcommand{\D}{\mathcal{D}}
\newcommand{\SD}{\operatorname{SD}}
\newcommand{\eval}[2][\right]{\relax\ifx#1\right\relax \left.\fi#2#1\rvert}
\title{\sc Deflatability of Permutation Classes}
\author{
	M. H. Albert\\
	M. D. Atkinson\footnote{Corresponding author: \url{mike@cs.otago.ac.nz}}\\[-0.25ex]
	\small Department of Computer Science\\[-0.5ex]
	\small University of Otago\\[-0.5ex]
	\small New Zealand	
	\and
	Cheyne Homberger\\
	Jay Pantone\footnote{Pantone's research was sponsored by the National Science Foundation under Grant Number DMS-1301692.}\\[-0.25ex]
	\small Department of Mathematics\\[-0.5ex]
	\small University of Florida\\[-0.5ex]
	\small USA	
}
\date{}
\begin{document}
\maketitle

\pagestyle{main}

\begin{abstract}
A deflatable permutation class is one in which the simple permutations are contained in a proper subclass. Deflatable permutation classes are often easier to describe and enumerate than non-deflatable ones. Some theorems which guarantee non-deflatability are proved and examples of both deflatable and non-deflatable principal classes are given.

Keywords: {Pattern class, simple permutation}
\end{abstract}

\section{Introduction}
\label{section:introduction}

This paper is inspired by a series of recent enumerative and structural results in the theory of permutation pattern classes.  These have all been proven by a common technique which relies on establishing a sparseness property of the simple permutations in a particular permutation class.  When this property holds, much of the analysis of the class can be carried out in a substantially smaller permutation class. This raises the question, interesting in its own right, of characterising when this sparseness property, which we will call \emph{deflatability}, holds. This paper answers that question, at least in part, for principal permutation classes.

In the remainder of this section we define the basic terms of our subject, explain our motivation in concrete terms, and summarise our results. Section~\ref{section:prelim-lemmas} contains preliminary results needed in the subsequent section. Section~\ref{section:notdef} is devoted to proving cases in which deflatability fails, while Section~\ref{section:def} provides example of principal classes which are deflatable. Finally, Section~\ref{section:concl} discusses the remaining unknown cases along with some open questions.

The fundamental concept in permutation class theory is the relation of permutation containment.  A permutation $\alpha$ is contained as a pattern (or subpermutation) in another permutation $\beta$ (denoted $\alpha\leq\beta$) if, when both are written in one line notation, $\beta$ has a subsequence whose terms are ordered in the same relative manner as the terms of $\alpha$. For instance, $312\leq 2531647$ because the entries of the subsequence $514$ follow the same relative order as the permutation $312$.  This relation is more clearly displayed in the diagrams of the permutations where we plot the points $(i, \beta(i))$ of a permutation $\beta$ in the $(x,y)$ plane.  For example, the diagram of $2531647$ is shown in Figure~\ref{figure:perm-ex-1}.

\begin{figure}
	\minipage{0.5\textwidth}
	\begin{center}
		\begin{tikzpicture}[scale=0.3]
			\draw[step=1cm,gray,very thin] (0.0,0.0) grid (8,8);
			\draw (0,0) rectangle (8,8);
			\fill (1,2) circle (0.2cm);
			\fill[color=gray] (2,5) circle (0.2cm);
			\fill (3,3) circle (0.2cm);
			\fill[color=gray] (4,1) circle (0.2cm);
			\fill (5,6) circle (0.2cm);
			\fill[color=gray] (6,4) circle (0.2cm);
			\fill (7,7) circle (0.2cm);
		\end{tikzpicture}
	\end{center}
	\caption{The diagram of the permutation $2531647$. The three gray points represent the pattern $312$ contained in $2531647$.\ \\}
	\label{figure:perm-ex-1}
\endminipage\hfill
\minipage{0.5\textwidth}
	\begin{center}
		\begin{tikzpicture}[scale=0.3]
			\fill [color=lightgray] (0.5,0.5) rectangle (4.5, 4.5);
			\draw[step=1cm,gray,very thin] (0.0,0.0) grid (8,8);
			\draw (0,0) rectangle (8,8);
			\fill (1,4) circle (0.2cm);
			\fill (2,3) circle (0.2cm);
			\fill (3,7) circle (0.2cm);
			\fill (4,1) circle (0.2cm);
			\fill (5,2) circle (0.2cm);
			\fill (6,6) circle (0.2cm);
			\fill (7,5) circle (0.2cm);
		\end{tikzpicture}
	\end{center}
	\caption{The permutation $4371265 = 2413[21,1,12,21]$. The shaded square represents a box which is cut by two cut points, one by position and one by value.}
	\label{figure:perm-ex-2}
	\endminipage
\end{figure}

Such diagrams will be used extensively, and we now define some associated terms.  A \emph{box} in a permutation diagram is a rectangular region  containing a subset of the points.  A {\em cut point} of a box is a point of the permutation  outside the box but whose position is between  the leftmost and rightmost points of the box (cut by position), or whose value is between the maximum and minimum points of the box (cut by value). An \emph{interval} of a permutation is a box which is not cut by position nor by value. Equivalently, an interval in a permutation is a set of entries whose indices and values each form a contiguous set, i.e, an interval of the domain and the range. For convenience, we use the convention that the entire permutation is not itself an interval.

Intervals of permutations arise naturally through the process of \emph{inflation}: an inflation of a permutation $\pi$ is a permutation formed by replacing some of the points of $\pi$ by other permutations (with appropriate adjustments of values so that the result is a permutation). The permutation which results from inflating a permutation $\alpha$ of length $k$ by subpermutations $\tau_1, \ldots, \tau_k$ is denoted by $\alpha[\tau_1, \ldots, \tau_k]$. For example, $4371265  = 2413[21, 1, 12, 21]$, as shown in Figure~\ref{figure:perm-ex-2}.

The pattern containment relation is a partial order on the set of all permutations.  It admits eight automorphisms corresponding to the isometries of the square; for example, inversion of permutations corresponds to reflection over the line $y=x$ (see \cite{simion:restricted-permutations} for a more comprehensive discussion).  The partial order is studied through its lower ideals, i.e., sets of permutations which are closed downwards under pattern containment. These sets are called \emph{classes}.  Commonly, a permutation class $\C$ is described by specifying the (unique) set of minimal permutations that do not belong to $\C$.  This set is called the \emph{basis} and we write $\C=\Av(B)$ to signify that $\C$ is the set of permutations that \emph{avoid} (do not contain) any of the permutations of the basis $B$.

The first permutation classes to be studied are the so-called \emph{principal} classes --- those whose a basis consists of a single permutation. All of our results will be confined to this case.  Despite their simple definition, the structure of principal permutation classes is not very well understood.  In particular the vast majority of permutation classes that have been enumerated are not principal.

Recent successes in permutation class enumeration (for example: \cite{atkinson:(3+1)-avoiding, albert:enumeration-2143-4231, albert:enumeration-three-classes-grid, albert:inflations-case-studies, pantone:enumeration-3124-4312, bona:pattern-avoiding-involutions}) have relied heavily on the notion of a \emph{simple} permutation.  A permutation $\sigma$ is simple if it has no intervals other than those consisting of single points.  Their significance in the theory of permutation classes in due in large part to the following result.

\begin{proposition}[Albert and Atkinson~\cite{albert:simple-permutations}]
	Every permutation $\pi$ is the inflation of a unique simple permutation $\sigma$.  If $|\sigma|>2$ then the maximal intervals of $\pi$ are disjoint and $\pi$ is obtained from $\sigma$ by inflating its points with these maximal intervals.
\end{proposition}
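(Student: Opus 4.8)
The plan is to follow the classical substitution‐decomposition argument (the permutation analogue of Gallai's modular decomposition of graphs). The single tool underlying everything is a \emph{combination lemma} for intervals: if $A$ and $B$ are intervals of a permutation $\pi$ with $A\cap B\neq\emptyset$ and neither contained in the other, then each of $A\cup B$, $A\cap B$, $A\setminus B$, $B\setminus A$ is either an interval of $\pi$ or all of $\pi$. This is immediate from the characterisation of an interval as a set of entries occupying a contiguous block of positions and a contiguous block of values, together with the fact that two overlapping sets of consecutive integers have union and intersection again consisting of consecutive integers. I would prove this first.

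For existence, I would argue by the trichotomy on a permutation $\pi$ with $|\pi|\ge 2$ (the case $|\pi|\le 1$ being trivial, with $\sigma=1$): either $\pi=12[\alpha,\beta]$ with $\alpha,\beta$ nonempty (\emph{sum-decomposable}), or $\pi=21[\alpha,\beta]$ with $\alpha,\beta$ nonempty (\emph{skew-decomposable}), or neither. In the first case take $\sigma=12$ and in the second $\sigma=21$; both are simple. In the third case consider the maximal proper intervals of $\pi$, the maximal elements under inclusion among intervals other than $\pi$ itself. Using the combination lemma I would show that no two of them overlap: if maximal proper intervals $A\neq B$ met, then $A\cup B$ would be an interval, hence equal to $\pi$ by maximality, and then the pieces $A\setminus B$, $A\cap B$, $B\setminus A$ would exhibit $\pi$ as $12[\cdot,\cdot]$ or $21[\cdot,\cdot]$, contradicting the hypothesis. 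Since singletons are intervals, the maximal proper intervals then partition the entries of $\pi$ into contiguous position‑ and value‑blocks; contracting each to a point yields a permutation $\sigma$ with $\pi=\sigma[\tau_1,\dots,\tau_k]$, where $\tau_i$ is the pattern of the $i$th block. Then $\sigma$ is simple, since a proper interval of $\sigma$ with two or more entries would pull back to a union of maximal proper intervals of $\pi$ that is proper yet strictly larger than one of them. Finally $|\sigma|\ge 2$ (else $\pi$ is itself a proper interval), $\sigma\notin\{12,21\}$ (else $\pi$ is sum‑ or skew‑decomposable), and there is no simple permutation of length $3$, so $|\sigma|\ge 4$; together with the disjointness already established this also proves the second sentence of the statement in this case.

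For uniqueness, suppose $\pi=\sigma[\tau_1,\dots,\tau_k]=\sigma'[\tau'_1,\dots,\tau'_m]$ with $\sigma$ and $\sigma'$ simple. I would first check that which of the three sizes $|\sigma|=1$, $|\sigma|=2$, $|\sigma|\ge4$ occurs depends only on $\pi$: $|\sigma|=1$ iff $|\pi|=1$; $\sigma\in\{12,21\}$ iff $\pi$ is sum‑ or skew‑decomposable (and a permutation of length at least two is not both, since the leading $j$ entries of a sum‑decomposable permutation are its $j$ smallest while those of a skew‑decomposable one are its $j$ largest), which pins down $\sigma$ in that case; and otherwise $|\sigma|\ge 4$. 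In the last case I claim $\sigma$ is forced to be the quotient of $\pi$ by its maximal proper intervals. The key sublemma is that, because $\sigma$ is simple, every proper interval $I$ of $\pi$ is either contained in a single block $\tau_i$ or is a union of whole blocks: if $I$ met a block only partially while also meeting a second block then, chasing positions and values through the combination lemma, that block would have to be simultaneously position‑extreme and value‑extreme in $\sigma$, which forces $\sigma$ to have a proper interval of size at least $2$ — impossible. Granting this, the blocks met by $I$ (when $I$ is a union of blocks) form an interval of $\sigma$, hence by simplicity a single block or all of them, so the maximal proper intervals of $\pi$ are exactly $\tau_1,\dots,\tau_k$; this determines $\sigma$ and re‑derives the second sentence.

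I expect the main obstacle to be the sublemma in the uniqueness step: pinning down exactly which intervals $\pi=\sigma[\tau_1,\dots,\tau_k]$ can have, and in particular ruling out an interval that swallows several blocks whole but clips one block at an end. That configuration is genuinely possible when $\sigma$ is merely sum‑ or skew‑decomposable, so the argument must use simplicity of $\sigma$ in an essential way; everything else is either a short reduction or a routine appeal to the contiguity definition of an interval.
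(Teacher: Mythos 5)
The paper states this result without proof, citing Albert and Atkinson, so there is no in-paper argument to compare against; your proposal is essentially the standard substitution-decomposition proof from that reference (the combination lemma for overlapping intervals, the sum/skew/indecomposable trichotomy for existence, and the ``every proper interval of $\sigma[\tau_1,\dots,\tau_k]$ lies inside a block when $\sigma$ is simple'' sublemma for uniqueness), and it is correct. The one step worth writing out fully is the sublemma you yourself flag: after showing that the set $J$ of blocks meeting a proper interval $I$ is itself an interval of $\sigma$, simplicity forces $J$ to be all of $\sigma$ whenever $|J|\ge 2$, and only then does deleting the (position- and value-extreme) partially-met blocks leave a proper interval of $\sigma$ of size at least $k-2\ge 2$, the desired contradiction --- this is precisely where $|\sigma|\ge 4$ (there being no simple permutations of length $3$) is used.
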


When $\sigma = 12$ or $\sigma=21$ in this proposition, we say that $\pi$ is \emph{sum-decomposable} or \emph{skew-decomposable}, respectively. Sums and skew-sums are written using the notation $\alpha \oplus \beta = 12[\alpha, \beta]$ and $\alpha \ominus \beta = 21[\alpha, \beta].$ If $\pi$ is not sum-decomposable, it is said to be \emph{sum-indecomposable}, with \emph{skew-indecomposable} defined similarly. We use the term \emph{decomposable} to mean either sum-decomposable or skew-decomposable, and we use the term \emph{indecomposable} to mean neither sum-decomposable nor skew-decomposable.

Simple permutations are quite common: Albert, Atkinson, and Klazar~\cite{albert:enumeration-simple-perms} showed that within the set of all permutations, they have density $1/e^2$.  However, it is often the case that within any particular permutation class (apart from the set of all permutations) their density is far lower; no satisfactory explanation of this phenomenon is known.  Indeed there is no known permutation class whose simple permutations have positive density within the class as a whole. The unexpected low density of simple permutations in a permutation class $\C$ is frequently a consequence of the simple permutations lying in a proper subclass $\C'$ of $\C$. This property may enable the structure of $\C$ to be determined and thereby its enumeration: briefly, $\C'$ is an easier class to work with and the entirety of $\C$ can be recovered by inflation.

Rather than think about the simple permutations of a given class, one can think about classes which contain a given set $S$ of simple permutations; for ease of exposition, we assume that $S$ is closed under taking simple subpermutations. On one end of the spectrum is the downward closure of $S$, i.e., the set of all subpermutations of elements of $S$, denoted $\Cl(S)$, which of course is the smallest class containing $S$. On the other end we encounter the notion of the \emph{substitution closure} of a class: the substitution closure of $\C$, denoted $\langle \C \rangle$, is the largest class containing the same simple permutations. With this notation, the largest class containing exactly the simples in $S$ is $\langle \Cl(S) \rangle$.

As mentioned above, it is frequently easier to enumerate and describe a class if its simple permutations are actually contained in a smaller class. This leads us to a key definition. A permutation class $\C$ is said to be \emph{deflatable} if its simple permutations lie in a smaller class, i.e., if $\C \subseteq \langle \D \rangle$ for some proper subclass $\D$ of $\C$. This term is intended to convey that the proper subclass $\D$ can be obtained by, in a sense, reversing the operation of inflating simple permutations. Moreover, it follows that $\C$ is not deflatable if and only if $\C$ is equal to the downward closure of its simple permutations.

Examples of deflatable and non-deflatable classes are readily given.  The class $\Av(231)$ is deflatable since 12 and 21 are its only simple permutations.  On the other hand $\Av(321)$ is not deflatable, a result which is more or less folkloric, but appears (essentially) in a paper by Albert, Atkinson, Brignall, Ru\v{s}kuc, Smith, and West~\cite[Proposition 6]{albert:growth-rates-subclasses-321}.

Now we can pose our central question: 
\emph{for which permutations $\pi$ is $\Av(\pi)$ deflatable}?
We give partial answers only.  In Section~\ref{section:notdef}, we will consider classes $\Av(\pi)$ for which $\pi$ is decomposable -- we will show that many of these classes are not deflatable. Section~\ref{section:def} presents a test for deflatability and uses this test to provide several examples of deflatable permutation classes, including an infinite family. Lastly, Section~\ref{section:concl} proves non-deflatability in a special case and poses some questions.

\section{Preliminary Lemmas}
\label{section:prelim-lemmas}

In the following chapters, we will sometimes restrict our focus to indecomposable permutations. The next lemma shows that this does not lose us much generality.

\begin{lemma}
	\label{lemma:embed-indec}
	Every permutation in $\Av(\pi)$ can be embedded into an indecomposable permutation in $\Av(\pi)$ unless $\pi \in \{1, 12, 21, 132, 213, 231, 312\}$.
\end{lemma}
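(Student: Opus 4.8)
The plan is to embed an arbitrary $\beta \in \Av(\pi)$ into a larger permutation that is both sum- and skew-indecomposable while still avoiding $\pi$. The natural construction is to adjoin a small number of extra points to $\beta$ that destroy every possible sum- and skew-decomposition: for instance, place a new point that is larger in value and later in position than everything in $\beta$, and a new point that is smaller in value and later in position, or some similar fixed ``wrapper'' pattern of two, three, or four points around $\beta$. Adding a point that is last in position but neither largest nor smallest in value kills skew- and sum-decomposability simultaneously at the top level, and one must check that no decomposition survives. So the first step is to pin down a concrete gadget $G$ (a short permutation, or a short sequence of points to be appended on the right) such that for \emph{every} permutation $\beta$, the permutation $\beta$ with $G$ appended is indecomposable.

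The second step is the content of the ``unless'' clause: appending $G$ must not create an occurrence of $\pi$. Here I would argue that if $|\pi|$ is large enough relative to $|G|$ and $\pi$ itself is not one of a few small exceptional patterns, then any occurrence of $\pi$ in $\beta \cup G$ uses at most $|G|$ of the new points, hence at least $|\pi| - |G|$ points of $\beta$, which by a pigeonhole/deletion argument would force an occurrence of some pattern strictly contained in $\pi$ — and one chooses the gadget so that the patterns it can contribute are exactly the forbidden small ones. More carefully: the new points of $G$ occupy a known position (rightmost) and known relative values, so the sub-permutation of $\pi$ induced on the ``old'' coordinates of any embedding is a specific pattern $\pi'$ determined by which points of $G$ were used; as long as each such $\pi'$ is a proper pattern of $\pi$ that $\beta$ already avoids (which it does, since $\beta \in \Av(\pi)$ implies $\beta$ avoids every strict sub-pattern of $\pi$ only if that sub-pattern is not itself $\pi$ — so the real requirement is $\pi' \lneq \pi$), no new copy of $\pi$ appears. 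The exceptional list $\{1,12,21,132,213,231,312\}$ should emerge precisely as those $\pi$ that are too short for this separation, or for which the minimal indecomposable wrapper already contains $\pi$; I would verify each of the seven by hand and, symmetrically, verify that $123$ and $321$ (and all longer patterns) are \emph{not} exceptional, presumably using a $2$- or $3$-point gadget.

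I expect the main obstacle to be the interaction between the choice of gadget and the exceptional set: a wrapper large enough to guarantee indecomposability for \emph{all} $\beta$ (including pathological ones like increasing or decreasing permutations, which are the hardest to make indecomposable) may be large enough to embed some small $\pi$ we would like to keep, so the gadget must be chosen minimally and perhaps adapted to $\beta$ (e.g.\ one wrapper when $\beta$ is sum-decomposable or empty-ish, another otherwise). Handling these edge cases — the empty permutation, $\beta$ of length $1$, monotone $\beta$ — and checking that the same fixed construction works uniformly, while keeping the new pattern $\pi'$ strictly below $\pi$, is the delicate part; the rest is a routine case check over the seven small patterns to confirm the list is exactly right, using the eight symmetries of the containment order to cut the casework (note $\{132,213,231,312\}$ is closed and $\{123\}$, $\{321\}$ are the two monotone classes of length $3$ that are notably absent).
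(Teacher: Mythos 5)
Your proposal is a strategy sketch rather than a proof, and two of its load-bearing steps are broken. First, no fixed ``wrapper'' $G$ appended on the right can work uniformly. Even your opening claim fails: a point that is last in position and of intermediate value does not kill sum-decomposability, because the decomposition can cut inside $\beta$ with the new point landing in the second block (append to $\beta=132$ a final point with value between $1$ and $2$: the result is $1432=1\oplus 321$). Monotone $\beta$ defeats most other small right-appended gadgets for similar reasons. The paper's construction is necessarily adaptive: it first passes to $\omega\ominus 1$, rewrites each singleton skew component as $12$, and then inserts a linking point \emph{between each consecutive pair of skew components} (not at the right end), so the number and placement of new points depends on $\omega$.

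Second, and more seriously, your argument that no copy of $\pi$ is created is invalid. You reduce to the induced pattern $\pi'$ on the old coordinates and require only $\pi'\lneq\pi$, but $\beta\in\Av(\pi)$ in no way prevents $\beta$ from containing proper subpatterns of $\pi$ (take $\pi=321$, $\beta=21$, $\pi'=21$: your criterion is satisfied yet a copy of $\pi$ could perfectly well be completed). The correct argument must use the extremal positions and values of the new points to constrain which \emph{roles} in an occurrence of $\pi$ they could play, and this forces a case split on the structure of $\pi$: the paper treats separately the case where $\pi$ has a corner point (there the linking step can only introduce the patterns $1$, $12$, $132$ beginning with $1$, whence the exceptional list) and the case where the four outer points of $\pi$ form one of $2143$, $2413$, $3142$, $3412$ (there a four-point $2413$ frame is added and a chain argument shows any occurrence of $\pi$ would need all four frame points, forcing $\pi$'s outer points to be a $2413$). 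Your proposal contains neither the adaptive construction nor the role-based avoidance argument, and the exceptional set $\{1,12,21,132,213,231,312\}$ is asserted to ``emerge'' rather than derived; as written the proof cannot be completed along the lines you describe.
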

\begin{proof}
	Let $\omega \in \Av(\pi)$. We first handle the case where $\pi$ has a corner point, i.e., $\pi$ has one of the forms $1 \oplus \tau$, $\tau \oplus 1$, $1 \ominus \tau$, or $\tau \ominus 1$. Assume further that $\pi$ starts with $1$; the other three cases follow symmetrical arguments.
	
	We first embed $\omega$ into the sum-indecomposable permutation $\hat{\omega} = \omega \ominus 1$. By the assumption that $\pi$ has the form $1 \oplus \tau$, it is clear that $\hat{\omega} \in \Av(\pi)$. Let us be explicit, just once, about this sort of remark. Suppose that $1 \oplus \tau \leq \omega \ominus 1$. Then there is a subset of $\omega \ominus 1$ whose elements have pattern $1 \oplus \tau$. In particular, the leftmost element of this subset is its least element. Therefore, the last element of $\omega \ominus 1$ cannot be in the set, and in fact $1 \oplus \tau \leq \omega$. Consider the skew-decomposition $\hat{\omega} = \omega_1 \ominus \cdots \ominus \omega_k$ such that each $\omega_i$ is itself skew-indecomposable. Form a new skew-indecomposable permutation $\widebar{\omega} = \widebar{\omega_1} \ominus \cdots \ominus \widebar{\omega_k}$, where $\widebar{\omega_i} = 12$ if $\omega_i = 1$, and $\widebar{\omega_i} = \omega_i$ otherwise. Lastly, obtain an indecomposable permutation $\zeta$ containing $\omega$ by taking each pair $(\widebar{\omega_i}, \widebar{\omega_{i+1}})$ of skew components of $\widebar{\omega}$ and linking them together by inserting an entry just before the final point of $\widebar{\omega_i}$ and just below the topmost point of $\widebar{\omega_{i+1}}$. The only permutations beginning with $1$ that can be introduced by this step are $1$, $12$, and $132$. Therefore $\zeta \in \Av(\pi)$. Figure~\ref{figure:embed-indec} gives an example of performing these steps to $\omega = 564213$. This completes the proof in the case that $\pi$ has a corner point.
	\begin{figure}
	\begin{center}
		\begin{tikzpicture}[scale=0.3, baseline=(current bounding box.south)]
			\fill [color=lightgray] (0.5,4.5) rectangle (2.5, 6.5);
			\fill [color=lightgray] (2.5,3.5) rectangle (3.5, 4.5);
			\fill [color=lightgray] (3.5,0.5) rectangle (6.5, 3.5);
			\draw[step=1cm,gray,very thin] (0.0,0.0) grid (7,7);
			\draw (0,0) rectangle (7,7);
			\fill (1,5) circle (0.2cm);
			\fill (2,6) circle (0.2cm);
			\fill (3,4) circle (0.2cm);
			\fill (4,2) circle (0.2cm);
			\fill (5,1) circle (0.2cm);
			\fill (6,3) circle (0.2cm);
			\node at (3.5,-1) {$\omega = 564213$};
			\node at (8.5,4) {$\Longrightarrow$};
		\end{tikzpicture}\!
		\begin{tikzpicture}[scale=0.3, baseline=(current bounding box.south)]
			\fill [color=lightgray] (0.5,5.5) rectangle (2.5, 7.5);
			\fill [color=lightgray] (2.5,4.5) rectangle (3.5, 5.5);
			\fill [color=lightgray] (3.5,1.5) rectangle (6.5, 4.5);
			\fill [color=lightgray] (6.5,0.5) rectangle (7.5, 1.5);
			\draw[step=1cm,gray,very thin] (0.0,0.0) grid (8,8);
			\draw (0,0) rectangle (8,8);
			\fill (1,6) circle (0.2cm);
			\fill (2,7) circle (0.2cm);
			\fill (3,5) circle (0.2cm);
			\fill (4,3) circle (0.2cm);
			\fill (5,2) circle (0.2cm);
			\fill (6,4) circle (0.2cm);
			\fill (7,1) circle (0.2cm);
			\node at (4,-1) {$\widehat{\omega} = 6753241$};
			\node at (9.5,4) {$\Longrightarrow$};
		\end{tikzpicture}\!
		\begin{tikzpicture}[scale=0.3, baseline=(current bounding box.south)]
			\fill [color=lightgray] (0.5,7.5) rectangle (2.5, 9.5);
			\fill [color=lightgray] (2.5,5.5) rectangle (4.5, 7.5);
			\fill [color=lightgray] (4.5,2.5) rectangle (7.5, 5.5);
			\fill [color=lightgray] (7.5,0.5) rectangle (9.5, 2.5);
			\draw[step=1cm,gray,very thin] (0.0,0.0) grid (10,10);
			\draw (0,0) rectangle (10,10);
			\fill (1,8) circle (0.2cm);
			\fill (2,9) circle (0.2cm);
			\fill (3,6) circle (0.2cm);
			\fill (4,7) circle (0.2cm);
			\fill (5,4) circle (0.2cm);
			\fill (6,3) circle (0.2cm);
			\fill (7,5) circle (0.2cm);
			\fill (8,1) circle (0.2cm);
			\fill (9,2) circle (0.2cm);
			\node at (5,-1) {$\widebar{\omega} = 896743512$};
			\node at (11.5,4) {$\Longrightarrow$};
		\end{tikzpicture}\hspace*{-.16in}
		\begin{tikzpicture}[scale=0.3, baseline=(current bounding box.south)]
			\fill [color=lightgray] (0.5,10.5) rectangle (3.5, 12.5);
			\fill [color=lightgray] (3.5,7.5) rectangle (6.5, 10.5);
			\fill [color=lightgray] (6.5,3.5) rectangle (10.5, 7.5);
			\fill [color=lightgray] (10.5,0.5) rectangle (12.5, 3.5);
			\draw[step=1cm,gray,very thin] (0.0,0.0) grid (13,13);
			\draw (0,0) rectangle (13,13);
			\fill (1,11) circle (0.2cm);
			\fill (2,9) circle (0.2cm);
			\fill (3,12) circle (0.2cm);
			\fill (4,8) circle (0.2cm);
			\fill (5,6) circle (0.2cm);
			\fill (6,10) circle (0.2cm);
			\fill (7,5) circle (0.2cm);
			\fill (8,4) circle (0.2cm);
			\fill (9,2) circle (0.2cm);
			\fill (10,7) circle (0.2cm);
			\fill (11,1) circle (0.2cm);
			\fill (12,3) circle (0.2cm);
			\node at (6.5,-1) {$\zeta = 11\;9\;12\;8\;6\;10\;5\;4\;2\;7\;1\;3$};
		\end{tikzpicture}
	\end{center}
	\caption{The progression from $\omega$ to $\zeta$ as described in the proof of Lemma~\ref{lemma:embed-indec}.}
	\label{figure:embed-indec}
\end{figure}
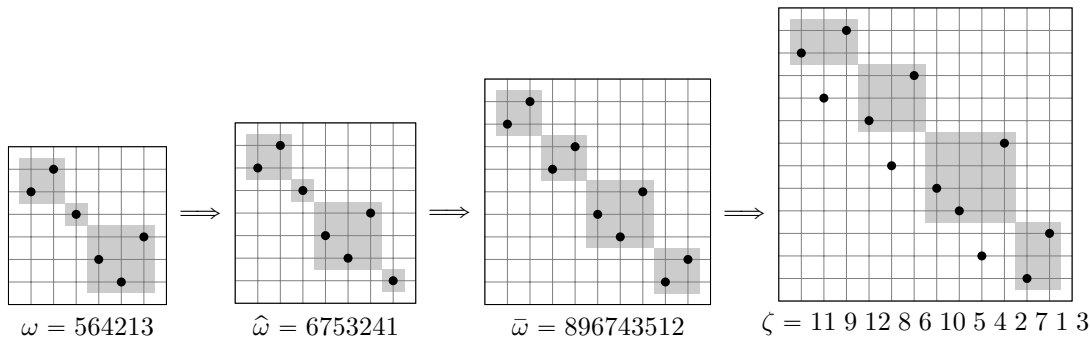

Assume now that $\pi$ has no corner points. It follows that the outer points (the top- bottom- left- and right-most entries) of $\pi$ form one of the patterns $2143$, $2413$, $3142$, or $3412$. By appealing to a symmetry if necessary, we can assume that these outer points do not form a $2413$ pattern. Form an indecomposable permutation $\zeta$ by adding outer points to $\pi$ to form a $2413$, as shown in Figure~\ref{figure:embed-indec-2413}. 
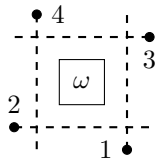
\begin{figure}
	\begin{center}
		\begin{tikzpicture}[scale=0.3, baseline=(current bounding box).center]
			\draw (2,2) rectangle (4,4) node[midway] {$\omega$};
			\draw[fill] (0,1) circle (.2) node[above=2pt] {$2$};
			\draw[fill] (5,0) circle (.2) node[left=2pt] {$1$};
			\draw[fill] (1,6) circle (.2) node[right=2pt] {$4$};
			\draw[fill] (6,5) circle (.2) node[below=2pt] {$3$};
			\draw[thick, dashed] (0,1) -- (6,1);
			\draw[thick, dashed] (5,0) -- (5,6);
			\draw[thick, dashed] (6,5) -- (0,5);
			\draw[thick, dashed] (1,6) -- (1,0);
		\end{tikzpicture}
		
	\end{center}
	\caption{The embedding of $\omega$ into $\zeta$ in the case where $\omega$ does not have a corner point as described in the proof of Lemma~\ref{lemma:embed-indec}.}
	\label{figure:embed-indec-2413}
\end{figure}
For convenience, we refer to these points as the $2$, the $4$, the $1$, and the $3$. Suppose toward a contradiction that $\zeta$ contains an occurrence of $\pi$. Since $\omega \in \Av(\pi)$, at least one of the new outer points must be involved in the occurrence of $\pi$. We will assume that the $2$ is involved, and all other cases follow symmetrically.  However, since we assumed that $\pi$ does not have a corner point, the outer point $1$ must also be involved. Similarly, $1$ would be a corner point unless the outer point $3$ were involved, and again $3$ would be a corner point unless the outer point $4$ were involved. Therefore, the outer points of $\pi$ form a $2413$, a contradiction.
\end{proof}

Suppose we wish to prove that a class $\Av(\pi)$ is not deflatable. To accomplish this, we have to show that each permutation $\omega \in \Av(\pi)$ can be embedded into some simple permutation of $\Av(\pi)$. By Lemma~\ref{lemma:embed-indec}, we may assume that $\omega$ is indecomposable; that is, it is an inflation of a simple permutation of length strictly greater than $2$. We henceforth always assume that $\omega$ is indecomposable unless otherwise stated. This allows us to speak of maximal intervals without fear that they may overlap.

If it happens that all maximal intervals of $\omega$ have length $1$, then $\omega$ is already a simple permutation (recall that we do not allow the entire permutation to be, itself, an interval). Thus, we can restrict ourselves to $\omega$ containing a longest interval $\alpha$ of size at least 2 (choosing one arbitrarily if there are several longest intervals). Then, we shall embed $\omega$ into a one-point extension $\omega^+$ by adding a new point $x$ which cuts the interval $\alpha$. Our aim will be to replace $\omega$ with $\omega^+$, ensuring that it too is indecomposable, lies in $\Av(\pi)$, and is closer to being a simple permutation. A permutation $\omega$ for which this is possible will be called \emph{breakable}. Replacing $\omega^+$ with $\omega$ and repeating this step, we shall eventually embed $\omega$ in a simple permutation of $\Av(\pi)$. 

Even if this basic step can be repeated whenever $\omega$ is not simple, we need a precise measure of being closer to a simple permutation. The measure we use is 
	\[\SD(\omega)  = \ds\sum_\gamma \left(|\gamma|-1\right),\]
where the summation is over all maximal intervals $\gamma$ of $\omega$, and $|\gamma|$ denotes the number of elements in $\gamma$. Our constructions will assure that $\SD(\omega)$ decreases. Clearly then the basic step can only be carried out finitely many times before $\SD(\omega)$ becomes equal to $0$, and hence the permutation becomes simple.

As we shall now see, there is a general criterion that guarantees that the basic step results in another indecomposable permutation and reduces the measure $\SD(\omega)$.

\begin{lemma}
	\label{lemma:MI}
	Suppose that $\omega$ is an indecomposable permutation with an interval $\alpha$ of maximum length $\ell > 1$. Suppose that $\omega^+$ is an extension of $\omega$ by a point $x$ that cuts $\alpha$ and that $\alpha \cup \{x\}$ does not form an interval in $\omega^+$. Then, $\omega^+$ is indecomposable and $\SD(\omega^+) < \SD(\omega)$.
\end{lemma}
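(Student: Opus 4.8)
The argument falls into two essentially independent pieces: indecomposability of $\omega^+$, and the inequality $\SD(\omega^+)<\SD(\omega)$.

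\textbf{Indecomposability.} I would argue by contradiction. Suppose $\omega^+$ is decomposable; after applying a symmetry of the square we may assume $\omega^+=\mu\oplus\nu$ with $\mu,\nu$ both nonempty. The new point $x$ lies in $\mu$ or in $\nu$. If deleting $x$ leaves both parts nonempty, then $\omega$ itself is sum-decomposable, contradicting the hypothesis. Otherwise $x$ is the unique element of $\mu$ or of $\nu$, i.e. $\omega^+$ equals $1\oplus\omega$ or $\omega\oplus1$ (and in the skew case, $1\ominus\omega$ or $\omega\ominus1$). In every one of these four situations $x$ is a corner point of $\omega^+$: it lies strictly below-and-left of, or strictly above-and-right of, every other point. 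But then no point of $\omega$ — in particular no point of $\alpha$ — can lie on both sides of $x$ in position or on both sides in value, so $x$ cannot cut $\alpha$, a contradiction.

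\textbf{Reformulating the measure.} Since an indecomposable permutation is the inflation of a simple permutation of length greater than $2$, its maximal intervals are pairwise disjoint and partition it (the quoted proposition of Albert and Atkinson). Hence $\SD(\pi)=|\pi|-N(\pi)$, where $N(\pi)$ denotes the number of maximal intervals of $\pi$. As $|\omega^+|=|\omega|+1$, the desired inequality $\SD(\omega^+)<\SD(\omega)$ is equivalent to $N(\omega^+)\ge N(\omega)+2$, so the task becomes: produce two ``extra'' maximal intervals. I would first normalise the situation. Because $x$ cuts $\alpha$ but $\alpha\cup\{x\}$ is not an interval of $\omega^+$, $x$ cuts $\alpha$ by exactly one of position and value (if it cut by both, then in $\omega^+$ the set $\alpha\cup\{x\}$ would occupy a block of consecutive positions and a block of consecutive values, hence be an interval). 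Applying a symmetry, assume $x$ cuts $\alpha$ by position and lies above every point of $\alpha$; write $[v_1,v_2]$ for the value range of $\alpha$ in $\omega^+$ and $v_x$ for the value of $x$. Now $\alpha\cup\{x\}$ does occupy consecutive positions, so the hypothesis that it is not an interval forces its values to be non-consecutive, i.e. $v_x\ge v_2+2$.

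\textbf{The key step.} The engine is the observation that if $\delta$ is \emph{any} interval of $\omega^+$ containing $x$, then $\delta\setminus\{x\}$ is an interval of $\omega$: deleting $x$ and renumbering turns the consecutive block of positions occupied by $\delta$ into a consecutive block with one fewer position, and likewise for values. I would use this to show that $\{x\}$ is a maximal interval of $\omega^+$. Indeed, suppose $\delta$ is an interval of $\omega^+$ with $x\in\delta$ and $|\delta|\ge 2$. Since $x$ cuts $\alpha$ by position, $x$ is strictly interior (in position) to the consecutive block that $\alpha\cup\{x\}$ occupies, so both position-neighbours of $x$ lie in $\alpha$; at least one of them is forced into $\delta$, so $\delta$ meets $\alpha$. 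By the observation, $\delta\setminus\{x\}$ is an interval of $\omega$ meeting $\alpha$, hence contained in $\alpha$ (as $\alpha$ is a maximal interval of $\omega$). Thus $\delta\subseteq\alpha\cup\{x\}$, so the value set of $\delta$ is contained in $[v_1,v_2]\cup\{v_x\}$; but $\delta$ contains a value $\le v_2$ and the value $v_x\ge v_2+2$, so by consecutiveness it contains $v_2+1$, which is not in $[v_1,v_2]\cup\{v_x\}$ — a contradiction.

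\textbf{Conclusion.} Every maximal interval of $\omega^+$ other than $\{x\}$ avoids $x$, hence (by the same renumbering argument) is an interval of $\omega$ and so lies inside a maximal interval of $\omega$. Therefore the maximal intervals of $\omega^+$ split into $\{x\}$, those contained in $\alpha$, and, for each maximal interval $\gamma\neq\alpha$ of $\omega$, those contained in $\gamma$. The points of $\alpha$ are covered by at least two of them: a single maximal interval of $\omega^+$ covering all of $\alpha$ would (being contained in $\alpha$) equal $\alpha$, making $\alpha$ an interval of $\omega^+$, which it is not since $x$ cuts it. Each $\gamma\neq\alpha$ is covered by at least one. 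Hence $N(\omega^+)\ge 1+2+(N(\omega)-1)=N(\omega)+2$, completing the proof. The only part that needs care is the normalisation in the reformulating step together with the ``position-neighbour of $x$ lies in $\alpha$'' assertion (which relies on $x$ being strictly position-interior to its block with $\alpha$, valid precisely because $x$ cuts $\alpha$ by position), and checking that no boundary configuration — such as $\alpha$ sitting at the start or end of the permutation — disrupts these counts.
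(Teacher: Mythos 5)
Your proof is correct and follows essentially the same route as the paper's: indecomposability via the observation that $x$ is not a corner point, the argument that $\{x\}$ is a singleton maximal interval of $\omega^+$ (using a positional neighbour in $\alpha$ together with a value-separator), and the fact that $\alpha$ must split into at least two maximal intervals of $\omega^+$ while every other maximal interval of $\omega$ contributes at least one. The only difference is cosmetic bookkeeping: you rewrite $\SD(\pi)=|\pi|-N(\pi)$ and count maximal intervals, whereas the paper compares the contributions $\sum(|\beta_i|-1)$ directly; both rest on the same standard fact that every proper interval of an inflation of a simple permutation of length at least $4$ lies inside a single maximal block.
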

\begin{proof}
	It is evident that $\omega^+$ is indecomposable because $x$ cuts $\alpha$ and so the new point $x$ is not a corner point.
	
	We compare the maximal intervals of $\omega$ with the maximal intervals of $\omega^+$. Without loss of generality, we may assume that $x$ cuts $\alpha$ by position so, since  $\alpha \cup \{x\}$ does not form an interval in $\omega^+$, $\alpha$ is separated from $x$ by value. Then, $x$ cuts no other maximal interval of $\omega$ by position, and cuts at most one maximal interval of $\omega$ by value.
	
	The maximal interval $\beta$ of $\omega^+$ that contains $x$ is just the singleton $\{x\}$, for if there were another point in this interval, it would have at least one positional neighbor $u$ in $\alpha$. Take $v$ to be a point separating $x$ from $\alpha$ by value. Then $v$ must lie in $\beta$ since $\beta$ contains points of value less than $v$ and greater than $v$. Now, $\beta \setminus \{x\}$ would be an interval of $\omega$ that contains points from two distinct maximal intervals, a contradiction. Clearly, the maximal interval $\{x\}$ contributes $0$ to $\SD (\omega^+)$.
	
	Now consider a maximal interval $\beta$ of $\omega^+$ that does not contain $x$. Then, $\beta$ is also an interval of $\omega$ and so $\beta$ is contained in a maximal interval $\gamma$ of $\omega$. If $x$ does not cut $\gamma$, then $\gamma$ is also an interval of $\omega^+$ and hence, by the maximality of $\beta$, we have $\beta = \gamma$. Thus, such intervals contribute equal amounts to both $\SD(\omega)$ and $\SD(\omega^+)$. However, if $x$ cuts $\gamma$ (which certainly happens if $\gamma = \alpha$), then $\gamma$ is not an interval of $\omega^+$ and so $\gamma$ will be a proper union $\beta_1 \cup \cdots \cup \beta_k$ of more than one maximal interval. Since the union is proper,
		\[
			\sum_{i=1}^k \left(|\beta_i|-1\right) = |\gamma|-k < |\gamma|-1.
		\]
It follows that $\SD(\omega^+) < \SD(\omega)$, as desired.
\end{proof}

Lemma~\ref{lemma:embed-indec} and Lemma~\ref{lemma:MI} set the stage for all of our proofs that a principal class $\Av(\pi)$ is not deflatable. They show that non-deflatability will follow if, for every indecomposable permutation $\omega \in \Av(\pi)$ with non-trivial maximal interval $\alpha$, we can find a one-point extension by a cut point $x$ of $\alpha$ to a permutation $\omega^+ \in \Av(\pi)$ such that $\alpha \cup \{x\}$ does not form an interval, i.e., if every indecomposable non-simple permutation of $\Av(\pi)$ is breakable. We shall call classes with this property \emph{extendible}. For convenience, we record this below.

\begin{lemma}
	\label{lemma:extendible}
	If a class $\C$ is extendible, then it is not deflatable.
\end{lemma}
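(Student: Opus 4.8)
The plan is to unwind the definition. The paper has already observed that $\C$ fails to be deflatable exactly when $\C$ equals the downward closure of its simple permutations. One inclusion is automatic: every simple permutation of $\C$ lies in $\C$, so $\C$ contains the downward closure of its simples. Hence it suffices to establish the other inclusion, namely that every $\omega\in\C$ can be embedded into some simple permutation that again lies in $\C$. So the whole argument reduces to producing, for an arbitrary $\omega\in\C$, a simple $\sigma\in\C$ with $\omega\le\sigma$.

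First I would dispose of the reduction to the indecomposable case. Given $\omega\in\C$, Lemma~\ref{lemma:embed-indec} embeds it into an indecomposable permutation of $\C$ (the finitely many bases excluded there yield classes small enough to inspect by hand, so I would note them and move on). Thus we may assume $\omega$ is indecomposable, which by the structure theorem for simple permutations means $\omega$ is the inflation of a simple permutation of length at least $3$; in particular its maximal intervals are pairwise disjoint, so $\SD(\omega)$ is well defined. If $\SD(\omega)=0$ then every maximal interval of $\omega$ is a single point, so $\omega$ has no nontrivial interval at all and is already simple: this is the base case.

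The inductive step is where extendibility enters. Suppose $\SD(\omega)>0$ with $\omega$ indecomposable. Then $\omega$ has a longest interval $\alpha$ with $|\alpha|\ge 2$, and since $\C$ is extendible there is a one-point extension $\omega^+\in\C$ obtained by adjoining a point $x$ that cuts $\alpha$ and for which $\alpha\cup\{x\}$ is not an interval of $\omega^+$. This is precisely the hypothesis of Lemma~\ref{lemma:MI}, which then tells us that $\omega^+$ is again indecomposable and that $\SD(\omega^+)<\SD(\omega)$. Applying the inductive hypothesis to $\omega^+$ produces a simple $\sigma\in\C$ with $\omega^+\le\sigma$, and since $\omega\le\omega^+$ we are done. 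Because $\SD$ takes nonnegative integer values and strictly decreases at each step, the recursion terminates; equivalently, the induction is on $\SD(\omega)\in\mathbb{Z}_{\ge 0}$.

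I do not expect a genuine obstacle here: the two preceding lemmas have done all of the real work, and this statement is essentially their packaging. The only points demanding care are the bookkeeping ones — confirming that $\SD(\omega)=0$ together with indecomposability really does force simplicity (it does, since an indecomposable permutation whose maximal intervals are all singletons has no nontrivial interval), checking that the quantity handed to Lemma~\ref{lemma:MI} is exactly ``$\alpha\cup\{x\}$ is not an interval'' so that the definition of \emph{extendible} is tailored to match its hypothesis, and recording that the exceptional bases of Lemma~\ref{lemma:embed-indec} are harmless. The closest thing to a subtlety is simply keeping track of the chain of containments $\omega\le\omega^+\le\sigma$ while the measure decreases, so that a finite iteration does land in a simple permutation of $\C$.
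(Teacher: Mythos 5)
Your argument is exactly the one the paper intends: the lemma is recorded without a formal proof precisely because the two preceding paragraphs already set out this induction on $\SD$, with Lemma~\ref{lemma:embed-indec} reducing to the indecomposable case and Lemma~\ref{lemma:MI} guaranteeing that each breaking step stays in $\C$, preserves indecomposability, and strictly decreases $\SD$ until a simple permutation is reached. Your write-up is correct and, if anything, slightly more careful than the paper about the base case and the exceptional bases of Lemma~\ref{lemma:embed-indec}.
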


\section{Non-Deflatable Permutation Classes}
\label{section:notdef}

This section focuses mainly on decomposable permutations $\pi$. Because deflatability is invariant over the symmetries of a permutation, we choose to consider only sum-decomposable permutations. Each of the proofs in this section proceeds with the following setup.

\begin{figure}
	\minipage{0.5\textwidth}
		\begin{center}
			\begin{tikzpicture}[scale = .6]
				\draw[draw=none, fill=black!15] (2,0) rectangle ++(2,2);
				\draw[draw=none, fill=black!15] (0,2) rectangle ++(2,2);
				\draw[draw=none, fill=black!15] (2,4) rectangle ++(2,2);
				\draw[draw=none, fill=black!15] (4,2) rectangle ++(2,2);
				\foreach \i in {2,4} {
					\draw (\i,0) -- (\i,6);
					\draw (0,\i) -- (6,\i);
				}
				\node at (1,1) {$\beta$};
				\node at (5,1) {$\epsilon$};
				\node at (5,5) {$\delta$};
				\node at (1,5) {$\gamma$};
				\node at (3,3) {$\alpha$};
			\end{tikzpicture}
		\end{center}
		\caption{Diagram of an indecomposable $\omega$ with a longest maximal interval $\alpha$. We label the four corner regions as $\beta$, $\gamma$, $\delta$, and $\epsilon$.}
		\label{figure:typical-omega}
	\endminipage\hfill
	\minipage{0.5\textwidth}
		\begin{center}
			\begin{tikzpicture}[scale = .6]
				\draw[draw=none, fill=black!15] (2,0) rectangle ++(2,2);
				\draw[draw=none, fill=black!15] (0,2) rectangle ++(2,2);
				\draw[draw=none, fill=black!15] (2,4) rectangle ++(2,2);
				\draw[draw=none, fill=black!15] (4,2) rectangle ++(2,2);
				\foreach \i in {2,4} {
					\draw (\i,0) -- (\i,6);
					\draw (0,\i) -- (6,\i);
				}
				\draw[fill] (3,2) circle (.08) node[above right] {$a$};
				\draw[fill] (1.5,1) circle (.08) node[right] {$b$};
				\draw[fill] (1.3,2.2) circle (.08) node[above left] {$x$};
				\draw[dashed] (1.3,1)--(1.3,2.2)--(3,2.2);
				\node at (3,3) {$\alpha$};
			\end{tikzpicture}
		\end{center}
		\caption{The permutation $\omega^+$ formed by inserting the entry $x$ into $\omega$ just to the left of $b$ and just above $a$.\\}
		\label{figure:beta-lambda-1}
	\endminipage
\end{figure}

Let $\pi$ be sum-decomposable and suppose $\omega \in \Av(\pi)$ is indecomposable and non-simple. Let $\alpha$ be a longest maximal interval of $\omega$. Then, $\omega$ can be depicted as in Figure~\ref{figure:typical-omega}, where the shaded regions signify that no entries cut $\alpha$ by either position or value. We will always refer to the regions $\alpha$, $\beta$, $\gamma$, $\delta$, and $\epsilon$ as shown in Figure~\ref{figure:typical-omega}.
Suppose that $\pi= \lambda \oplus \mu \oplus \rho$ (where we allow $\mu$ to be possibly empty).

We first show that if $\lambda \leq \beta$ or $\rho \leq \delta$, then $\omega$ has a one-point extension splitting $\alpha$, so that in all future proofs we can assume $\beta \in \Av(\lambda)$ and $\delta \in \Av(\rho)$.

Suppose that $\lambda \leq \beta$. Let $b$ be the rightmost point of the leftmost occurrence of $\lambda$ in $\beta$. Let $a$ be the bottommost point of $\alpha$. Insert a new entry $x$ just to the left of $b$ and just above $a$ to form $\omega^+$, as in Figure~\ref{figure:beta-lambda-1}. It is clear that $\alpha \cup \{x\}$ is not an interval because they are separated by $b$. Furthermore, suppose that the insertion of $x$ introduced an occurrence of $\pi$ in $\omega^+$. Then, $x$ itself must be involved, otherwise $\omega$ would have contained an occurrence of $\pi$. However, $x$ cannot play a role in the $\lambda$ part of $\pi$, for otherwise there would be an occurrence of $\mu \oplus \rho$ above and to the right of $x$, and hence above and to the right of the occurrence of $\lambda$ which ends with $b$. This would imply that $\omega$ contained an occurrence of $\pi$, a contradiction. Moreover, if $x$ played the role in the $\mu \oplus \rho$ part of $\pi$ in $\omega^+$, then there would be an occurrence of $\lambda$ below and to the left of $x$, contradicting our choice of $b$. Thus, $\omega^+ \in \Av(\pi)$.

Therefore, if $\lambda \leq \beta$, it follows that $\omega$ is breakable. A symmetric argument shows that $\omega$ is breakable if $\rho \leq \delta$. Thus, for $\pi = \lambda \oplus \mu \oplus \rho$ (with $\mu$ possibly empty), when trying to show that $\omega \in \Av(\pi)$ is breakable, we may always assume that $\beta \in \Av(\lambda)$ and $\delta \in \Av(\rho)$. Additionally, the indecomposability of $\omega$ implies that $\gamma$ and $\epsilon$ are not both empty. 
We can now begin to investigate which decomposable permutations $\pi$ lead to deflatable classes $\Av(\pi)$ and which do not. Most of the remainder of this section is dedicated to showing that ``most'' such classes are non-deflatable.

\begin{theorem}
	\label{theorem:three-comp-sum}
	Let $\pi = \lambda \oplus \mu \oplus \rho$ with all three summands non-empty. Then, $\Av(\pi)$ is not deflatable.
\end{theorem}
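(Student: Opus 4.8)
The plan is to show that $\Av(\pi)$ is \emph{extendible}, since by Lemma~\ref{lemma:extendible} this immediately gives non-deflatability. So I would take an arbitrary indecomposable non-simple $\omega\in\Av(\pi)$, fix a longest maximal interval $\alpha$ together with its surrounding regions $\beta,\gamma,\delta,\epsilon$ as in Figure~\ref{figure:typical-omega}, and invoke the reductions already made just above the theorem: $\beta\in\Av(\lambda)$, $\delta\in\Av(\rho)$, and $\gamma$ and $\epsilon$ are not both empty. Because the $180^\circ$ rotation carries $\pi$ to another sum $\rho'\oplus\mu'\oplus\lambda'$ of three nonempty summands while swapping $\beta\leftrightarrow\delta$ and $\gamma\leftrightarrow\epsilon$, I would assume without loss of generality that $\epsilon$ is nonempty; let $e$ be its leftmost point and let $a$ be the bottommost point of $\alpha$.

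The one-point extension I would use is $\omega^+$, obtained by inserting a new point $x$ just to the right of $e$ in position and just above $a$ in value. First I would verify the hypotheses of Lemma~\ref{lemma:MI}: since $|\alpha|\ge 2$, the value of $x$ lies strictly between the least and greatest values of $\alpha$, so $x$ cuts $\alpha$ by value; and $e$, which sits between $\alpha$ and $x$ in position but lies below every point of $\alpha\cup\{x\}$ in value, cuts $\alpha\cup\{x\}$, so this set is not an interval of $\omega^+$. Lemma~\ref{lemma:MI} then yields that $\omega^+$ is indecomposable with $\SD(\omega^+)<\SD(\omega)$, so the whole argument reduces to proving $\omega^+\in\Av(\pi)$.

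The point of this particular placement of $x$ is that it forces two structural facts, which I would record before the main argument: every entry lying both above and to the right of $x$ must lie in $\delta$ (being above $x$ excludes $a$ and the regions $\beta,\epsilon$, while being to the right of $x$, hence of $e$, hence of $\alpha$, excludes $\alpha$ and the regions $\beta,\gamma$), and every entry lying both below and to the left of $x$ must lie in $\{a\}\cup\beta\cup\{e\}$ (below $x$ confines it to $\{a\}\cup\beta\cup\epsilon$, and to the left of $x$ trims $\epsilon$ down to its leftmost point $e$). Now I would assume for contradiction that $\omega^+$ contains an occurrence of $\pi=\lambda\oplus\mu\oplus\rho$; it must use $x$, which therefore lies in exactly one of the three blocks. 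If $x$ lies in the $\lambda$-block or the $\mu$-block, then the entire $\rho$-block sits above and to the right of $x$, hence inside $\delta$, so $\rho\le\delta$, contradicting $\delta\in\Av(\rho)$. If $x$ lies in the $\rho$-block, then the $\lambda\oplus\mu$ part of the occurrence sits below and to the left of $x$, hence inside $\{a\}\cup\beta\cup\{e\}$; but $a$ has the strictly largest value and $e$ the strictly largest position among these entries, so in any occurrence of $\lambda\oplus\mu$ they can only play the role of the maximum entry and of the rightmost entry respectively, and both of those entries of $\lambda\oplus\mu$ belong to $\mu$ since $\mu$ is nonempty. Hence the $\lambda$-block uses neither $a$ nor $e$, so $\lambda$ embeds into $\beta$, contradicting $\beta\in\Av(\lambda)$. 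In every case we reach a contradiction, so $\omega^+\in\Av(\pi)$ and $\omega$ is breakable.

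I expect the genuinely delicate step to be this last case, $x$ in the $\rho$-block: it is the only place where the nonemptiness of the middle summand $\mu$ is used, and it is exactly this that lets one force the whole $\lambda$-part into $\beta$ rather than merely into $\{a\}\cup\beta\cup\{e\}$ — which is why the hypothesis that all three summands are nonempty is essential. The remaining ingredients — checking the two cut conditions and the two containment facts about entries above-right and below-left of $x$ — should be routine reading of the diagram.
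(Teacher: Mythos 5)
Your proof is correct and is essentially the paper's own argument viewed through the $180^\circ$ rotation symmetry: the paper assumes $\gamma\neq\emptyset$ and inserts $x$ just left of the rightmost point of $\gamma$ and just below the topmost point of $\alpha$, which is exactly the rotated image of your insertion next to $e$ and $a$, and the final contradiction (forcing $a$ and $e$, respectively $a$ and $c$, into the $\mu$-block because $\mu$ is nonempty, hence pushing $\lambda$ into $\beta$, respectively $\rho$ into $\delta$) is the same. Your write-up is, if anything, slightly more careful than the paper's, which omits the (identical) subcase where $x$ lies in the $\mu$-block.
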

\begin{proof}
	Let $\omega \in \Av(\pi)$ be indecomposable. As above, we can assume that $\beta \in \Av(\lambda)$, $\delta \in \Av(\rho)$. At least one of $\gamma$ and $\epsilon$ is non-empty. Since $\pi$ satisfies the conditions of the hypothesis if and only if $\pi^{-1}$ does, we can assume that $\gamma$ is non-empty without loss of generality. Let $c$ be the rightmost entry of $\gamma$ and let $a$ be the topmost entry of $\alpha$. Form $\omega^+$ by inserting an entry $x$ into $\omega$ that lies just to the left of $c$ and just below $a$, as in Figure~\ref{figure:three-comp-sum-1}.

\begin{figure}
	\minipage{0.5\textwidth}
		\begin{center}
			\begin{tikzpicture}[scale = .6]
				\draw[draw=none, fill=black!15] (2,0) rectangle ++(2,2);
				\draw[draw=none, fill=black!15] (0,2) rectangle ++(2,2);
				\draw[draw=none, fill=black!15] (2,4) rectangle ++(2,2);
				\draw[draw=none, fill=black!15] (4,2) rectangle ++(2,2);
				\draw[draw=none, fill=black!15] (1.5,4) rectangle ++(.5,2);
				\foreach \i in {2,4} {
					\draw (\i,0) -- (\i,6);
					\draw (0,\i) -- (6,\i);
				}
				\draw[fill] (3,4) circle (.08) node[above] {$a$};
				\draw[fill] (1.5,5) circle (.08) node[above left=-1pt] {$c$};
				\draw[fill] (1.3,3.8) circle (.08) node[below left] {$x$};
				\draw[dashed] (1.3, 5)--(1.3,3.8)--(2.8,3.8);
				\node at (3,3) {$\alpha$};
			\end{tikzpicture}
		\end{center}
		\caption{The permutation $\omega^+$ formed by inserting the entry $x$ into $\omega$ just to the left of $c$ and just below $a$, in the proof of Theorem~\ref{theorem:three-comp-sum}}
		\label{figure:three-comp-sum-1}
	\endminipage\hfill
	\minipage{0.5\textwidth}
		\begin{center}
			\begin{tikzpicture}[scale = .6]
				\draw[draw=none, fill=black!15] (2,0) rectangle ++(2,2);
				\draw[draw=none, fill=black!15] (0,2) rectangle ++(2,2);
				\draw[draw=none, fill=black!15] (2,4) rectangle ++(2,2);
				\draw[draw=none, fill=black!15] (4,2) rectangle ++(2,2);
				\draw[draw=none, fill=black!15] (.8,0) rectangle ++(1.2,2);
				\draw[draw=none, fill=black!15] (0,4) rectangle ++(2,.6);
				\foreach \i in {2,4} {
					\draw (\i,0) -- (\i,6);
					\draw (0,\i) -- (6,\i);
				}
				\node at (3,3) {$\alpha$};
				\draw[fill=black] (1.2, 4.6) circle (.08) node[below] {$c$};
				\draw[fill=black] (2, 3) circle (.08) node[left] {$a$};
				\draw[fill=black] (2.2, 4.8) circle (.08) node[right] {$x$};
				\draw[dashed] (1.2, 4.8) -- (2.2, 4.8) -- (2.2, 3);
			\end{tikzpicture}
		\end{center}
		\caption{The diagram for Case 1 in the proof of Theorem~\ref{theorem:33proof}.\ \\\ \\}
		\label{figure:33proof-1}
	\endminipage
\end{figure}

Suppose that $x$ is part of an occurrence of $\pi$ in $\omega^+$. If $x$ plays a role in the $\rho$ part of $\pi$, then $\lambda \leq \beta$, a contradiction. If $x$ plays a role in the $\lambda$ part of $\pi$, then the $\mu \oplus \rho$ part of the occurrence of $\pi$ lies among $\{a,c\} \cup \delta$. At least one of $a$ or $c$ must belong to the $\rho$ part of this occurrence, since otherwise we would have $\rho \leq \delta$. However, $a \in \rho$ and $c \in \rho$ are each impossible since $\mu$ is non-empty and $a$ and $c$ are, respectively, the first and lowest elements of $\{a,c\} \cup \delta$. Hence, $x$ is not part of an occurrence of $\pi$, which shows that $\omega$ is breakable. Since $\omega$ was an arbitrary indecomposable permutation, $\Av(\pi)$ is extendible, and so, by Lemma~\ref{lemma:extendible}, is not deflatable.
\end{proof}

As the above theorem did not require $\lambda$, $\mu$, and $\rho$ to be sum-indecomposable, it handles all sum-decomposable permutations except for those of the form $\pi = \lambda \oplus \rho$ with $\lambda$ and $\rho$ sum-indecomposable. The next theorem begins to handle this case.

\begin{theorem}
	\label{theorem:33proof}
	Let $\pi = \lambda \oplus \rho$, with $|\lambda|, |\rho| \geq 2$. Then, $\Av(\pi)$ is not deflatable. 
\end{theorem}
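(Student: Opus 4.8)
The plan is to show that $\Av(\pi)$ is extendible and then quote Lemma~\ref{lemma:extendible}. Since $|\pi|=|\lambda|+|\rho|\geq 4$, Lemma~\ref{lemma:embed-indec} lets us restrict to indecomposable permutations, and by the standing reduction of this section we may take an indecomposable non-simple $\omega\in\Av(\pi)$ with a longest maximal interval $\alpha$ satisfying $\beta\in\Av(\lambda)$ and $\delta\in\Av(\rho)$; it then suffices to produce a point $x$ cutting $\alpha$ with $\alpha\cup\{x\}$ not an interval and $\omega^{+}\in\Av(\pi)$, after which Lemma~\ref{lemma:MI} finishes. First we may assume $\lambda,\rho$ are sum-indecomposable: if $\lambda=\lambda_{1}\oplus\lambda_{2}$, say, then $\pi=\lambda_{1}\oplus\lambda_{2}\oplus\rho$ has three nonempty summands and Theorem~\ref{theorem:three-comp-sum} applies. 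In particular, as $|\lambda|,|\rho|\geq 2$, neither of $\lambda,\rho$ starts with its minimum or ends with its maximum. Finally $\gamma,\epsilon$ are not both empty, and since $\pi^{-1}=\lambda^{-1}\oplus\rho^{-1}$ meets the same hypotheses while inversion swaps $\gamma\leftrightarrow\epsilon$, assume $\gamma\neq\emptyset$.

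The key tool is \emph{corner absorption}: if $\sigma\in\Av(\tau)$ with $\tau$ sum-indecomposable and $|\tau|\geq 2$, then adjoining to $\sigma$ one new point northeast of all of $\sigma$ keeps the result in $\Av(\tau)$, since a new occurrence of $\tau$ would use the added point as its last and largest entry, forcing $\tau=\tau'\oplus 1$; likewise adjoining a point southwest of $\sigma$ is harmless because $\tau\neq 1\oplus\tau''$. Hence $\beta$ with the leftmost point $a$ of $\alpha$ adjoined still avoids $\lambda$, and $\delta$ with the topmost point $a''$ of $\alpha$ adjoined still avoids $\rho$.

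Now consider two candidate extensions. \emph{Insertion A} (Figure~\ref{figure:33proof-1}): with $c$ the lowest point of $\gamma$, insert $x$ just above $c$ in value and just right of $a$ in position; then $x$ cuts $\alpha$ by position and $c$ sits strictly between $\alpha$ and $x$ in value, so $\alpha\cup\{x\}$ is not an interval. In a hypothetical occurrence of $\pi=\lambda\oplus\rho$ in $\omega^{+}$ using $x$: if $x$ is in the $\lambda$-part, the $\rho$-part lies northeast of $x$, hence inside $\delta$, contradicting $\delta\in\Av(\rho)$; if $x$ is in the $\rho$-part, the $\lambda$-part lies southwest of $x$, hence inside $\beta\cup\{a,c\}$, and since $\beta\cup\{a\}\in\Av(\lambda)$ the occurrence must use $c$, necessarily as its maximum. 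So Insertion~A works unless $\lambda$ occurs in $\beta\cup\{a\}$ with a point of $\gamma$ adjoined on top. \emph{Insertion B} (the transpose): with $c_{0}$ the rightmost point of $\gamma$, insert $x$ just left of $c_{0}$ in position and just below $a''$ in value; then $x$ cuts $\alpha$ by value and $c_{0}$ separates $\alpha$ from $x$ by position. Now if $x$ is in the $\rho$-part, the $\lambda$-part lies southwest of $x$, hence inside $\beta$ alone --- impossible; and if $x$ is in the $\lambda$-part, the $\rho$-part lies northeast of $x$, hence inside $\{c_{0},a''\}\cup\delta$, and since $\delta\cup\{a''\}\in\Av(\rho)$ the occurrence must use $c_{0}$. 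So Insertion~B works unless $\rho$ occurs in $\delta$ with a point of $\gamma$ adjoined on top.

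The remaining step --- showing that at least one of A, B always applies, i.e.\ that these two residual obstructions cannot coexist --- is where I expect the real difficulty. I would organize it as a case analysis on the horizontal positions of the points of $\gamma$ relative to $\beta$ (this controls whether $c$, respectively $c_{0}$, is forced to be an extreme point of $\beta\cup\{a,c\}$, respectively $\{c_{0},a''\}\cup\delta$), using sum-indecomposability of $\lambda$ and $\rho$ to eliminate the surviving patterns. The boundary cases $|\lambda|=2$ (so $\lambda=21$) and dually $|\rho|=2$, where the corner-absorption bound says nothing, would be dispatched by a bespoke insertion or via the reverse-complement symmetry $\pi^{rc}=\rho^{rc}\oplus\lambda^{rc}$. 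In short, the insertions themselves are routine to check; the work is in assembling them into an exhaustive dichotomy and clearing the length-two cases.
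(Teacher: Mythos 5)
Your setup is sound and, up to the point where you stop, it tracks the paper's argument closely: the same reductions (sum-indecomposability of $\lambda$ and $\rho$ via Theorem~\ref{theorem:three-comp-sum}, $\beta\in\Av(\lambda)$, $\delta\in\Av(\rho)$, $\gamma\neq\emptyset$ by symmetry), and your Insertions A and B are exactly the paper's two main insertions (the one in Case~1/first part of Case~2b, and the point $x'$ in Case~2b). Your ``corner absorption'' observation is correct and is a clean way to package the usual ``the occurrence cannot use $a$'' steps. But the proof is genuinely incomplete, and the missing piece is the actual crux of the theorem. You frame the remaining task as showing that ``these two residual obstructions cannot coexist,'' and you defer it. In fact they \emph{can} coexist without any immediate contradiction: the obstruction to A is an occurrence of $\lambda$ in $\beta\cup\{c\}$ with $c$ as its maximum but not its last point, and the obstruction to B is an occurrence of $\rho$ in $\{c_0\}\cup\delta$ with $c_0$ first but not minimal; these do not splice into an occurrence of $\lambda\oplus\rho$ in $\omega$ unless every point of the first occurrence is both left of and below every point of the second, which fails whenever $\beta$ has points to the right of $c_0$ or $\delta$ has points below $c$ in value.

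The paper resolves precisely these configurations with machinery you have not supplied: a third insertion (just to the \emph{right} of the rightmost point $c'$ of $\gamma$ and just below the top of $\alpha$, Figure~\ref{figure:33proof-2}), which works unconditionally but only breaks $\alpha$ when $\beta$ has a point to the right of all of $\gamma$ (this is Case~2a, and the symmetric version handles $\delta$ having a point below $c$); then a sub-case analysis on where the lowest point $d$ of $\delta$ sits relative to $c$ and $c'$; and finally, in the surviving case $c<d<c'$, the splicing argument that combines the two witnessing occurrences into a copy of $\pi$ in $\omega$ itself, contradicting $\omega\in\Av(\pi)$. None of this is ``routine assembly'': it is where the hypotheses of Cases~1, 2a, 2b are actually used, and your proposed organizing principle (horizontal positions of $\gamma$ relative to $\beta$ only) omits the equally necessary vertical comparison of $\delta$ with $\gamma$. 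Separately, your worry about the cases $|\lambda|=2$ or $|\rho|=2$ is unfounded --- corner absorption applies to any sum-indecomposable $\tau$ with $|\tau|\geq 2$, including $21$ --- so no bespoke argument is needed there; the real work is all in the coexistence case you left open.
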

\begin{proof}
	Theorem~\ref{theorem:three-comp-sum} allows us to assume that $\lambda$ and $\rho$ are sum-indecomposable.	Suppose that $\omega \in \Av(\pi)$ is indecomposable. We wish to show that $\omega$ is breakable. To this end, choose a largest maximal interval $\alpha$ of $\omega$ and let $\beta$, $\gamma$, $\delta$ and $\epsilon$ be as in Figure \ref{figure:typical-omega}.  By our previous arguments we may also suppose that $\beta \in \Av(\lambda)$ and $\delta \in \Av(\rho)$. We can assume by symmetry that $\gamma$ is non-empty (for the same reason as in the previous theorem). Furthermore, at least one of $\beta$ and $\delta$ is non-empty, as otherwise $\omega$ is skew-decomposable. We consider a division into cases.
		
	 \bigskip
 
	\emph{Case 1: $\beta$ is empty, or the last entry of $\beta$ precedes the smallest entry of $\gamma$}

Let $c$ be the smallest entry of $\gamma$. Suppose that all entries of $\beta$ lie to the left of $c$ (it is permissible that $\beta$ be empty). Let $a$ be the leftmost entry in $\alpha$. We will show that $\alpha$ can be split by an entry $x$ placed just above $c$ and just to the right of $a$, as in Figure~\ref{figure:33proof-1}.

Suppose that the placement of $x$ introduces an occurrence of $\pi$. If $x$ were an entry of the $\lambda$ part of $\pi$, then the $\rho$ part of $\pi$ would lie entirely above it and to its right. This would force $\rho$ to be entirely contained in $\delta$, a contradiction to an earlier assumption. Therefore, $x$ must be an entry in the $\rho$ part of $\pi$. It follows that the $\lambda$ part of $\pi$ occurs in $\{a,c\} \cup \beta$.

If the occurrence of $\lambda$ contains the point $c$, then the occurrence of $\rho$ contains $x$ as its first and least entry, a contradiction to the assumption that $\rho$ is sum-indecomposable. If the occurrence of $\lambda$ contains the point $a$ (but not the point $c$), then $a$ is the last and greatest entry of $\lambda$, contradicting that $\lambda$ is sum-indecomposable. Hence the occurrence of $\lambda$ is contained entirely within $\beta$, contradicting a previous assumption.


\bigskip

\emph{Case 2: The smallest entry of $\gamma$ precedes the last entry of $\beta$}

Let $b$ be the rightmost entry of $\beta$ and let $c'$ be the rightmost entry of $\gamma$. We handle two cases: either $c'$ precedes $b$ or $b$ precedes $c'$.

\smallskip

\emph{Case 2a: The last entry of $\gamma$ precedes the last entry of $\beta$}

Consider a splitting entry $x$ which lies just to the right of $c'$ and just below $a'$, as in Figure~\ref{figure:33proof-2}. If the insertion of $x$ creates an occurrence of $\pi$, then $x$ lies in the $\lambda$ part of $\pi$ -- otherwise the $\lambda$ part of $\pi$ lies entirely in $\beta$, a contradiction. Hence, the occurrence of the $\rho$ part is contained in $\{a'\} \cup \delta$. Since $\delta \in \Av(\rho)$, the occurrence of $\rho$ must contain the point $a'$, implying that $\rho$ is sum-decomposable. This contradicts our previous assumption. Therefore, $x$ splits $\alpha$ without introducing an occurrence of $\pi$. Note that this case did not require that $c \neq c'$ nor $a \neq a'$. 
\begin{figure}
	\minipage{0.5\textwidth}
		\begin{center}
			\begin{tikzpicture}[scale = .6]
				\draw[draw=none, fill=black!15] (2,0) rectangle ++(2,2);
				\draw[draw=none, fill=black!15] (0,2) rectangle ++(2,2);
				\draw[draw=none, fill=black!15] (2,4) rectangle ++(2,2);
				\draw[draw=none, fill=black!15] (4,2) rectangle ++(2,2);
				\draw[draw=none, fill=black!15] (1.5,0) rectangle ++(.5,2);
				\draw[draw=none, fill=black!15] (0,4) rectangle ++(2,.6);
				\draw[draw=none, fill=black!15] (1.2,4) rectangle ++(.8,2);
				\foreach \i in {2,4} {
					\draw (\i,0) -- (\i,6);
					\draw (0,\i) -- (6,\i);
				}
				\node at (3,3) {$\alpha$};
				\draw[fill=black] (.8, 4.6) circle (.08) node[below] {$c$};
				\draw[fill=black] (1.2, 5.2) circle (.08) node[above=2pt,left] {$c'$};
				\draw[fill=black] (2, 3) circle (.08) node[left] {$a$};
				\draw[fill=black] (1.4, 3.8) circle (.08) node[below] {$x$};
				\draw[fill=black] (1.5, 1) circle (.08) node[left] {$b$};
				\draw[fill=black] (3, 4) circle (.08) node[right=2pt, above] {$a'$};
				\draw[dashed] (3, 3.8) -- (1.4, 3.8) -- (1.4, 5.2);
			\end{tikzpicture}

		\end{center}
		\caption{The diagram for Case 2a in the proof of Theorem~\ref{theorem:33proof}.}
		\label{figure:33proof-2}
	\endminipage\hfill
	\minipage{0.5\textwidth}
		\begin{center}
			\begin{tikzpicture}[scale = .6]
				\draw[draw=none, fill=black!15] (2,0) rectangle ++(2,2);
				\draw[draw=none, fill=black!15] (0,2) rectangle ++(2,2);
				\draw[draw=none, fill=black!15] (2,4) rectangle ++(2,2);
				\draw[draw=none, fill=black!15] (4,2) rectangle ++(2,2);
				\draw[draw=none, fill=black!15] (1.2,0) rectangle ++(.8,2);
				\draw[draw=none, fill=black!15] (0,4) rectangle ++(2,.6);
				\draw[draw=none, fill=black!15] (1.5,4) rectangle ++(.5,2);
				\foreach \i in {2,4} {
					\draw (\i,0) -- (\i,6);
					\draw (0,\i) -- (6,\i);
				}
				\node at (3,3) {$\alpha$};
				\draw[fill=black] (.8, 4.6) circle (.08) node[below] {$c$};
				\draw[fill=black] (1.5, 5.2) circle (.08) node[above=2pt,left] {$c'$};
				\draw[fill=black] (2, 3) circle (.08) node[left] {$a$};
				\draw[fill=black] (1.2, 1) circle (.08) node[left] {$b$};
				\draw[fill=black] (3, 4) circle (.08) node[right=2pt, above] {$a'$};
				\draw[fill=black] (2.2, 4.8) circle (.08) node[right] {$x$};
				\draw[dashed] (.8, 4.8) -- (2.2, 4.8) -- (2.2, 3);
			\end{tikzpicture}
			\end{center}
		\caption{The first diagram for Case 2b in the proof of Theorem~\ref{theorem:33proof}.}
		\label{figure:33proof-3}
	\endminipage
\end{figure}

\smallskip

\emph{Case 2b: The last entry of $\beta$ precedes the last entry of $\gamma$}

Assume now that $b$ precedes $c'$. Let $x$ be a splitting entry which lies just to the right of $a$ and just above $c$.  (See Figure~\ref{figure:33proof-3}.)

Suppose that the insertion of the entry $x$ creates an occurrence of $\pi$. Since $x$ cannot lie in the $\lambda$ part of $\pi$ (as $\delta$ avoids $\rho$), $x$ must lie in the $\rho$ part of $\pi$. Hence, the $\lambda$ part of $\pi$ is contained in $\{a,c\} \cup \beta$. The point $c$ must be part of the occurrence of $\lambda$, since otherwise $\lambda$ is sum-decomposable. In fact, the point $a$ cannot be part of the $\lambda$ occurrence because this, together with $c$ also lying in the $\lambda$ part would force $x$ to be both the first and smallest entry of $\rho$, once again implying that $\rho$ is sum-decomposable. Therefore, there is an occurrence of $\lambda$ within $\{c\}\cup \beta$ that contains $c$. 

Now consider an alternative splitting point $x'$ placed just to the left of $c'$ and just below $a'$. Let $d$ be the lowest point of $\delta$. If $d$ is lower than $c$, then we can proceed by an argument symmetrical to Case 2a, and if $d > c'$, then we can proceed by an argument symmetrical to Case 1. Therefore, we may assume that $c$ is lower than $d$ and that $d$ is lower than $c'$. Figure~\ref{figure:33proof-4} shows the new splitting point $x'$, along with the occurrence of $\lambda$ which caused $x$ to fail as a splitting point. Assume also that $x'$ creates an occurrence of $\pi$. An argument symmetric to that of the previous paragraph by a reflection over the antidiagonal shows that there must be an occurrence of $\rho$ involving $c'$ and some points of $\delta$, as shown in Figure~\ref{figure:33proof-5}.

\begin{figure}
	\minipage{0.5\textwidth}
		\begin{center}
			\begin{tikzpicture}[scale = .6]
				\draw[draw=none, fill=black!15] (2,0) rectangle ++(2,2);
				\draw[draw=none, fill=black!15] (0,2) rectangle ++(2,2);
				\draw[draw=none, fill=black!15] (2,4) rectangle ++(2,2);
				\draw[draw=none, fill=black!15] (4,2) rectangle ++(2,2);
				\draw[draw=none, fill=black!15] (1.1,0) rectangle ++(.9,2);
				\draw[draw=none, fill=black!15] (0,4) rectangle ++(2,.6);
				\draw[draw=none, fill=black!15] (1.6,4) rectangle ++(.4,2);
				\draw[draw=none, fill=black!15] (4,4) rectangle ++(2,.8);
				\foreach \i in {2,4} {
					\draw (\i,0) -- (\i,6);
					\draw (0,\i) -- (6,\i);
				}
				\node at (3,3) {$\alpha$};
				\draw[fill=black] (.8, 4.6) circle (.08) node[below] {$c$};
				\draw[fill=black] (1.6, 5.2) circle (.08) node[above=2pt,left] {$c'$};
				\draw[fill=black] (2, 3) circle (.08) node[right] {$a$};
				\draw[fill=black] (1.5, 3.8) circle (.08) node[left=5pt, below right] {$x'$};
				\draw[fill=black] (1.1, 1) circle (.08) node[left] {$b$};
				\draw[fill=black] (5,4.8) circle (.08) node[above] {$d$};
				\draw[fill=black] (3, 4) circle (.08) node[right=2pt, above] {$a'$};
				\draw[dashed] (1.5, 3.8) -- (1.5, 5.2);
				\draw[dashed] (1.5, 3.8) -- (3, 3.8);
				\draw[thick] (-.2,-.2) rectangle (1.3,4.8);
				\node at (.2,.2) {$\lambda$};
			\end{tikzpicture}
		\end{center}
		\caption{A diagram showing the existence of an occurrence of $\lambda$ in $\omega$ as in Case 2b of Theorem~\ref{theorem:33proof}.}
		\label{figure:33proof-4}
	\endminipage\hfill
	\minipage{0.5\textwidth}
		\begin{center}
			\begin{tikzpicture}[scale = .6]
				\draw[draw=none, fill=black!15] (2,0) rectangle ++(2,2);
				\draw[draw=none, fill=black!15] (0,2) rectangle ++(2,2);
				\draw[draw=none, fill=black!15] (2,4) rectangle ++(2,2);
				\draw[draw=none, fill=black!15] (4,2) rectangle ++(2,2);
				\draw[draw=none, fill=black!15] (1.1,0) rectangle ++(.9,2);
				\draw[draw=none, fill=black!15] (0,4) rectangle ++(2,.6);
				\draw[draw=none, fill=black!15] (1.5,4) rectangle ++(.5,2);
				\draw[draw=none, fill=black!15] (4,4) rectangle ++(2,1);
				\foreach \i in {2,4} {
					\draw (\i,0) -- (\i,6);
					\draw (0,\i) -- (6,\i);
				}
				\node at (3,3) {$\alpha$};
				\draw[fill=black] (.8, 4.6) circle (.08) node[below] {$c$};
				\draw[fill=black] (1.5, 5.2) circle (.08) node[left=3pt, above right] {$c'$};
				\draw[fill=black] (2, 3) circle (.08) node[right] {$a$};
				\draw[fill=black] (1.1, 1) circle (.08) node[left] {$b$};
				\draw[fill=black] (5,5) circle (.08) node[above] {$d$};
				\draw[fill=black] (3, 4) circle (.08) node[right=2pt, above] {$a'$};
				\draw[thick] (-.2,-.2) rectangle (1.3,4.8);
				\draw[thick] (1.3, 4.8) rectangle (6.2, 6.2);
				\node at (.2,.2) {$\lambda$};
				\node at (5.8, 5.8) {$\rho$};
			\end{tikzpicture}
		\end{center}
		\caption{A diagram showing the existence of an occurrence of $\pi$ in $\omega$ as in Case 2b of Theorem~\ref{theorem:33proof}.}
		\label{figure:33proof-5}
	\endminipage
\end{figure}

Thus $\pi = \lambda \oplus \rho$ is contained in $\omega$, a contradiction.
\end{proof}

The two previous theorems show that for all decomposable permutations $\pi$ that are either the sum of three or more components, or are two-component sums with components both of size at least two, the class $\Av(\pi)$ is not deflatable. To complete the decomposable case, it remains, by symmetry, to handle the case where $\pi = 1 \oplus \rho$ for sum-indecomposable $\rho$. Initial intuition suggests that perhaps this is the easy case -- not only is that intuition false, it actually is not true that classes $\Av(\pi)$ with $\pi = 1 \oplus \rho$ are all non-deflatable, as we shall see in Section~\ref{section:def}. 

For the remainder of this section we concern ourselves solely with the case $\pi = 1 \oplus \rho$. We will at times need to refer to specific elements of this permutation and may do so either by position, e.g., ``the leftmost element of $\rho$'' or by value -- here keep in mind that for instance ``$2$'' would refer to the least element of $\rho$. Also, we assume $|\pi| = n$, so $n$ is always the maximum value (and of course occurs somewhere in $\rho$).

A set of two consecutive entries in a permutation is called a \emph{bond} if they are also consecutive in value (equivalently, a bond is a two element interval). If the elements of a bond form a $12$ pattern it is an \emph{increasing bond}, while a $21$ pattern is referred to as a \emph{decreasing bond}. For example, in the permutation $134652$, the entries $34$ form an increasing bond, while the entries $65$ form a decreasing bond.

The presence of bonds in $\rho$ seems to play an important role in the deflatability of $\pi = 1 \oplus \rho$. In particular, if $\rho$ lacks either an increasing bond or a decreasing bond, then $\Av(\pi)$ is not deflatable. We prove this in two parts, depending on whether $\rho$ starts with an ascent or starts with a descent.

\begin{theorem}
	\label{theorem:bonds-inc}
	Let $\pi = 1 \oplus \rho$, where $\rho$ is sum-indecomposable and starts with an ascent. If $\rho$ lacks either an increasing bond or a decreasing bond, then $\Av(\pi)$ is not deflatable.
\end{theorem}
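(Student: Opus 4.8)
The plan is to prove that $\Av(\pi)$ is extendible, so that Lemma~\ref{lemma:extendible} gives non-deflatability. Fix an indecomposable non-simple $\omega \in \Av(\pi)$, pick a longest maximal interval $\alpha$, and set up the regions $\beta, \gamma, \delta, \epsilon$ of Figure~\ref{figure:typical-omega}. Viewing $\pi = 1 \oplus \rho$ as $\lambda \oplus \mu \oplus \rho$ with $\lambda = 1$ and $\mu$ empty, the reduction at the start of this section lets us assume $\beta$ is empty (else $\omega$ is breakable) and $\delta \in \Av(\rho)$, while indecomposability of $\omega$ keeps $\gamma$ and $\epsilon$ from both being empty. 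Two standing consequences of the hypotheses will be used throughout: since $\rho$ is sum-indecomposable its first entry is not its least, and since $\rho$ also begins with an ascent, $\rho(1) < \rho(2)$, so $\rho(1)$ is not the greatest entry either --- in particular $\rho$ is neither $1 \oplus \tau$ nor $1 \ominus \tau$ for any $\tau$.

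Now I would split on whether $\gamma$ or $\epsilon$ is nonempty. These two branches cannot be folded together by passing to $\pi^{-1}$, the way the two previous theorems combined their cases: ``$\rho$ lacks an increasing bond or a decreasing bond'' is inversion-invariant, but ``$\rho$ begins with an ascent'' is not, so the family $\{1 \oplus \rho\}$ obeying this theorem's hypotheses is not closed under the only symmetry that fixes $\alpha$ while staying in that family. In the branch $\gamma \neq \emptyset$ I would choose an extreme entry $c$ of $\gamma$ and an extreme entry $a$ of $\alpha$ and insert a new point $x$ value-adjacent to $a$ and position-adjacent to $c$ (with the mirror-image choice in the branch $\epsilon \neq \emptyset$); then $x$ cuts $\alpha$ and $c$ is a cut point of $\alpha \cup \{x\}$, so $\alpha \cup \{x\}$ is not an interval and Lemma~\ref{lemma:MI} makes $\omega^{+}$ indecomposable with $\SD(\omega^{+}) < \SD(\omega)$. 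It remains to verify $\omega^{+} \in \Av(\pi)$, i.e.\ that no occurrence of $\pi$ uses $x$. If $x$ plays the leading $1$, the copy of $\rho$ lies strictly above and to the right of $x$; the geometry of the insertion confines that region to $\delta$ together with at most the two special entries $a$ and $c$, and a short analysis of which of $a,c$ can participate --- using $\delta \in \Av(\rho)$, the sum-indecomposability of $\rho$, and the fact that $\rho$ cannot begin with its greatest entry --- rules this out except in a residual configuration. If instead $x$ lies inside the copy of $\rho$, then the leading $1$ is one of the (at most two) points of $\omega^{+}$ lying below and to the left of $x$, and rerunning the same three facts forces the copy of $\rho$ onto $\{x\}$ together with a subset of $\delta$, with $x$ realizing an interior value --- again a residual configuration.

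The residual configurations are where the hypothesis that $\rho$ lacks a bond of one orientation is spent. In each, a putative copy of $\rho$ rests partly on $\delta$ (or $\epsilon$) with one entry of $\alpha$ (or the new point $x$) pinned against it, and one argues either by switching to an alternative splitting point --- in the spirit of Case~2b of Theorem~\ref{theorem:33proof} --- or by extracting directly from the forced configuration two positionally consecutive entries of $\rho$ that are also consecutive in value, that is, a bond; selecting the side on which $x$ was inserted lets one dictate the orientation of this bond, making it one of exactly the kind the hypothesis forbids. The step I expect to be the main obstacle is precisely this bookkeeping: choosing $c$, $a$, and the side of the insertion so that in every residual configuration a bond of the excluded orientation is genuinely forced, and doing so uniformly across both the $\gamma \neq \emptyset$ and the $\gamma = \emptyset$ branches, which --- unlike in the earlier theorems --- get no assistance from symmetry.
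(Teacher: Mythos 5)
There is a genuine gap: the entire content of the theorem lives in what you call the ``residual configurations,'' and you do not resolve them --- you explicitly defer ``choosing $c$, $a$, and the side of the insertion so that in every residual configuration a bond of the excluded orientation is genuinely forced'' as the main obstacle. That is not bookkeeping; it is the proof. Everything before that point (extendibility via Lemma~\ref{lemma:extendible}, the reduction to $\beta$ empty and $\delta \in \Av(\rho)$, Lemma~\ref{lemma:MI} handling indecomposability and the $\SD$ decrease) is the shared boilerplate of every theorem in Section~\ref{section:notdef}; the bond hypothesis is never actually cashed in.

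The reason you get stuck is that you anchor the inserted point at an extreme entry of $\gamma$ (or $\epsilon$), whereas the paper anchors it at the \emph{leftmost entry $d$ of $\delta$} --- and $\delta$ is guaranteed non-empty here precisely because $\beta$ is empty and $\omega$ is not skew-decomposable (a fact you do not use; you only invoke ``$\gamma$ and $\epsilon$ not both empty,'' which is the wrong dichotomy for this configuration). Placing $x$ just to the right of the leftmost entry $a$ of $\alpha$ and value-adjacent to $d$ (just above $d$ when no decreasing bond is assumed, just below $d$ in the relevant subcase when no increasing bond is assumed) sets up the dichotomy that drives the whole argument: in any occurrence of $\pi$ through $x$, either $d$ is also involved --- in which case $x$ and $d$ are consecutive in position and value within the occurrence and form a bond of exactly the forbidden orientation --- or $d$ is not involved, in which case $d$ can substitute for $x$ and $\omega$ itself already contained $\pi$, a contradiction. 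Your placement next to $c \in \gamma$ does not support either horn: the copy of $\rho$ sits up and to the right of the $1$, while $c$ sits up and to the \emph{left} of $\alpha$, so $c$ is neither forced into the occurrence adjacent to $x$ nor positioned to substitute for $x$ in the $\rho$ part. You would need to rediscover the $d$-anchored insertion to close the argument, and at that point the $\gamma$-versus-$\epsilon$ case split becomes unnecessary (the paper's only secondary split, in the no-increasing-bond case, is on whether $d$ lies below all of $\gamma$).
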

\begin{proof}
	This proof is split into two separate cases in which $\rho$ either has no increasing bond or has no decreasing bond. As always, we assume $\omega \in \Av(\pi)$ is indecomposable, and that $\alpha$ is a largest maximal interval of $\omega$ with $\beta$, $\gamma$, $\delta$ and $\epsilon$ as in Figure \ref{figure:typical-omega}. Since $\lambda = 1$, we can assume that $\beta$ is empty in addition to assuming that $\delta \in \Av(\rho)$. Further $\delta$ is non-empty else $\omega$ would be skew-decomposable. 
	
	\bigskip
	
	\emph{Case 1: $\rho$ has no decreasing bond}
	
	Let $a$ be the leftmost entry of $\alpha$ and let $d$ be the leftmost entry of $\delta$. Form a one-point extension $\omega^+$ of $\omega$ by inserting a point $x$ just to the right of $a$ and just above $d$ as in Figure~\ref{figure:1plus-1}. If $\omega^+$ contains an occurrence of $\pi$, then it is immediately clear that $x$ must play a role in the $\rho$ part; otherwise $x$ plays the role of $1$ which would force $\rho \leq \delta$. Therefore, the $1$ of $\pi$ either lies in $\gamma$ or is equal to $a$.
	
	If the $1$ of $\pi$ lies in $\gamma$, then no entry in $\alpha$ can play a role in $\gamma$. Hence, $d$ also cannot play a role in $\rho$, as then it would be the entry immediately following $x$ and $\{x,d\}$ would form a decreasing bond. From this it follows that $\omega$ itself had an occurrence of $\pi$ in which $d$ played the same role as $x$ did in $\omega^+$, a contradiction. When this type of argument is used subsequently, we say that ``$d$ substitutes for $x$''.
	
	If the $1$ of $\pi$ lies in $\alpha$, then a priori it may be possible for other entries of $\alpha$ to play a role in an occurrence of $\pi$. However, now $x$ must be the first entry of $\rho$, and so the presence any other entry in $\alpha$ (other than $a$) in $\rho$ would force $\rho$ to start with a descent. Hence $d$ substitutes for $x$, as again $d$ cannot play a role in in the occurrence of $\pi$ and $d$ and $x$ are split neither by value nor by position by any other entry involved in the occurrence of $\pi$.
	
\begin{figure}
	\minipage{0.5\textwidth}
		\begin{center}
			\begin{tikzpicture}[scale = .6]
				\draw[draw=none, fill=black!15] (2,0) rectangle ++(2,2);
				\draw[draw=none, fill=black!15] (0,2) rectangle ++(2,2);
				\draw[draw=none, fill=black!15] (2,4) rectangle ++(2,2);
				\draw[draw=none, fill=black!15] (4,2) rectangle ++(2,2);
				\draw[draw=none, fill=black!15] (0,0) rectangle ++(2,2);
				\draw[draw=none, fill=black!15] (4,4) rectangle ++(.8,2);
				\foreach \i in {2,4} {
					\draw (\i,0) -- (\i,6);
					\draw (0,\i) -- (6,\i);
				}
				\node at (3,3) {$\alpha$};
				\draw[fill=black] (2, 3) circle (.08) node[left] {$a$};
				\draw[fill=black] (2.2, 5.2) circle (.08) node[above] {$x$};
				\draw[fill=black] (4.8,5) circle (.08) node[right] {$d$};
				\draw[dashed] (2.2,3) -- (2.2, 5.2) -- (4.8, 5.2);
			\end{tikzpicture}
		\end{center}
		\caption{The diagram corresponding to \emph{Case 1} in the proof of Theorem~\ref{theorem:bonds-inc}.\ \\}
		\label{figure:1plus-1}
	\endminipage\hfill
	\minipage{0.5\textwidth}
		\begin{center}
			\begin{tikzpicture}[scale = .6]
				\draw[draw=none, fill=black!15] (2,0) rectangle ++(2,2);
				\draw[draw=none, fill=black!15] (0,2) rectangle ++(2,2);
				\draw[draw=none, fill=black!15] (2,4) rectangle ++(2,2);
				\draw[draw=none, fill=black!15] (4,2) rectangle ++(2,2);
				\draw[draw=none, fill=black!15] (0,0) rectangle ++(2,2);
				\draw[draw=none, fill=black!15] (4,4) rectangle ++(.8,2);
				\draw[draw=none, fill=black!15] (0,4) rectangle ++(2,1);
				\foreach \i in {2,4} {
					\draw (\i,0) -- (\i,6);
					\draw (0,\i) -- (6,\i);
				}
				\node at (3,3) {$\alpha$};
				\draw[fill=black] (2, 3) circle (.08) node[left] {$a$};
				\draw[fill=black] (2.2, 5.2) circle (.08) node[above] {$x$};
				\draw[fill=black] (4.8,5) circle (.08) node[right] {$d$};
				\draw[dashed] (2.2,3) -- (2.2, 5.2) -- (4.8, 5.2);
			\end{tikzpicture}
		\end{center}
		\caption{The diagram corresponding to the first part of \emph{Case 2} in the proof of Theorem~\ref{theorem:bonds-inc}.}
		\label{figure:1plus-2}
	\endminipage
\end{figure}

	\bigskip
	
	\emph{Case 2: $\rho$ has no increasing bond}
	
	Let $a$ be the leftmost entry of $\alpha$ and let $d$ be the leftmost entry of $\delta$. We consider two separate cases: either $d$ is lower in value than all entries that lie in $\gamma$, or else there is some entry $c \in \gamma$ which is lower in value than $d$.
	
	First assume that $d$ is lower in value than all entries that lie in $\gamma$. Form $\omega^+$ by inserting an entry $x$ just to the right of $a$ and just above $d$, as in Figure~\ref{figure:1plus-2}. Note that this is the same placement as in \emph{Case 1}. As in the first part, if an occurrence of $\pi$ in $\omega^+$ contained any entry of $\alpha$ other than $a$, this would violate the assumption that $\rho$ started with an ascent. Moreover, the $1$ of $\pi$ cannot lie in $\gamma$, as there is no entry of $\gamma$ lower than $x$. This completes the proof under this assumption.
	
	Now assume that there exists an entry $c \in \gamma$ which is lower in value than $d$. Here we form $\omega^+$ in a different way, by placing the new entry $x$ just below $d$ instead of just above $d$. See Figure~\ref{figure:1plus-3} for a diagram of this placement. Suppose $\omega^+$ contains an occurrence of $\pi$. Then, $x$ must play a role in the $\rho$ part of $\pi$, otherwise $\rho \leq \delta$. We proceed as in \emph{Case 1}. If the $1$ of $\pi$ is in $\gamma$, then both $x$ and $d$ cannot be involved as they would form an increasing bond. Since no entry of $\alpha$ can be involved, $d$ substitutes for $x$. Otherwise, if the $1$ of $\pi$ is $a$, then $x$ is the second entry in the occurrence of $\pi$ and the third entry lies above and to the right of $d$ (because $\rho$ starts with an ascent). So, again, $d$ substitutes for $x$. This completes the proof of \emph{Case 2}.	
\begin{figure}
	\minipage{0.5\textwidth}
		\begin{center}
			\begin{tikzpicture}[scale = .6]
				\draw[draw=none, fill=black!15] (2,0) rectangle ++(2,2);
				\draw[draw=none, fill=black!15] (0,2) rectangle ++(2,2);
				\draw[draw=none, fill=black!15] (2,4) rectangle ++(2,2);
				\draw[draw=none, fill=black!15] (4,2) rectangle ++(2,2);
				\draw[draw=none, fill=black!15] (0,0) rectangle ++(2,2);
				\draw[draw=none, fill=black!15] (4,4) rectangle ++(.8,2);
				\foreach \i in {2,4} {
					\draw (\i,0) -- (\i,6);
					\draw (0,\i) -- (6,\i);
				}
				\node at (3,3) {$\alpha$};
				\draw[fill=black] (2, 3) circle (.08) node[left] {$a$};
				\draw[fill=black] (2.2, 4.8) circle (.08) node[above] {$x$};
				\draw[fill=black] (4.8,5) circle (.08) node[right] {$d$};
				\draw[fill=black] (1,4.5) circle (.08) node[left] {$c$};
				\draw[dashed] (2.2,3) -- (2.2, 4.8) -- (4.8, 4.8);
			\end{tikzpicture}
		\end{center}
		\caption{The diagram corresponding to the second part of \emph{Case 2} in the proof of Theorem~\ref{theorem:bonds-inc}.}
		\label{figure:1plus-3}
	\endminipage\hfill
	\minipage{0.5\textwidth}
		\begin{center}
			\begin{tikzpicture}[scale = .6]
				\draw[draw=none, fill=black!15] (2,0) rectangle ++(2,2);
				\draw[draw=none, fill=black!15] (0,2) rectangle ++(2,2);
				\draw[draw=none, fill=black!15] (2,4) rectangle ++(2,2);
				\draw[draw=none, fill=black!15] (4,2) rectangle ++(2,2);
				\draw[draw=none, fill=black!15] (0,0) rectangle ++(2,2);
				\draw[draw=none, fill=black!15] (4,1.5) rectangle ++(2,.5);
				\foreach \i in {2,4} {
					\draw (\i,0) -- (\i,6);
					\draw (0,\i) -- (6,\i);
				}
				\draw[fill] (5,1.5) circle (.08) node [below] {$e$};
				\draw[fill] (3,2) circle (.08) node [below] {$a$};
				\draw[fill] (5.2, 2.2) circle (.08) node [above] {$x$};
				\draw[dashed] (3,2.2)--(5.2,2.2)--(5.2,1.5);
				\node at (3,3) {$\alpha$};
			\end{tikzpicture}
		\end{center}
		\caption{The diagram corresponding to an initial placement of $x$ in Theorem~\ref{theorem:bonds-dec1}.}
		\label{figure:1plus-4}
	\endminipage
\end{figure}
\end{proof}


The above theorem handles all cases in which $\pi = 1 \oplus \rho$, where $\rho$ starts with an ascent and does not simultaneously have both kinds of bonds.  We next handle the case in which $\rho$ starts with a descent and has no increasing bond. For convenience, we say that $\pi$ satisfies condition $(\ddagger)$ if:
	\[\text{there is at least one entry to the right of 2 that is less than the leftmost entry of $\rho$ \tag{$\ddagger$}}\]

\begin{theorem}
	\label{theorem:bonds-dec1}
	Suppose that $\pi$ satisfies condition $(\ddagger)$, starts with a descent and has no increasing bond. Then, $\Av(\pi)$ is not deflatable.
\end{theorem}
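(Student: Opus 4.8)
Following the established template, we take an indecomposable, non-simple $\omega \in \Av(\pi)$ with a longest maximal interval $\alpha$ and the surrounding regions $\beta,\gamma,\delta,\epsilon$ as in Figure~\ref{figure:typical-omega}. Since $\lambda = 1$, the earlier discussion lets us assume $\beta$ is empty and $\delta \in \Av(\rho)$; moreover $\delta$ is non-empty (else $\omega$ is skew-decomposable). We want to exhibit a cut point $x$ of $\alpha$ producing $\omega^+ \in \Av(\pi)$ with $\alpha \cup \{x\}$ not an interval, which by Lemma~\ref{lemma:MI} and Lemma~\ref{lemma:extendible} finishes the proof.

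\textbf{Setting up the extension.} Because $\rho$ starts with a descent, the first element of $\rho$ is \emph{not} its least element ``2''; condition $(\ddagger)$ provides an entry to the right of the ``2'' that is below the leftmost entry of $\rho$. The natural first attempt is to insert $x$ just below the lowest entry $d$ of $\delta$ and just to the right of the leftmost entry $a$ of $\alpha$ (as suggested by Figure~\ref{figure:1plus-4}, with $e$ playing the role of $d$). If $\omega^+$ contains $\pi$, then $x$ must be used (else $\omega$ already contained $\pi$), and $x$ cannot play the ``1'' role (else $\rho \leq \delta$, contradicting $\delta \in \Av(\rho)$). So $x$ lies in the $\rho$-part, and the ``1'' of $\pi$ is either $a$ or an entry of $\gamma$. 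The key leverage is the \emph{no increasing bond} hypothesis combined with $\rho$ starting with a descent: since $x$ sits just below $d$, if $d$ were also used in the occurrence it would have to follow $x$ in the pattern and form an increasing bond with it — forbidden. Hence $d$ is unused and, being separated from $x$ by no other participating entry in position or value, ``$d$ substitutes for $x$'', yielding an occurrence of $\pi$ already in $\omega$, a contradiction. One must check both sub-cases for the ``1'': if the ``1'' is in $\gamma$, no entry of $\alpha$ participates and the argument goes through directly; if the ``1'' is $a$, then $x$ is forced to be the \emph{first} entry of $\rho$, and since $\rho$ starts with a descent the next participating entry must lie below $x$ and to its right, again ruling out any use of other $\alpha$-entries and confirming $d$ substitutes.

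\textbf{The obstruction and the fix.} The above breaks down exactly when the ``1'' of $\pi$ can be played by an entry $c \in \gamma$ that lies \emph{below} $x$ (i.e.\ below $d$); then $d$ need not participate and cannot substitute. This is where condition $(\ddagger)$ and a second placement are needed. In that situation I would instead place $x$ using an entry $c \in \gamma$ strategically — analogously to the second part of Case~2 in Theorem~\ref{theorem:bonds-inc} — splitting $\alpha$ with a point whose vertical position relative to $\gamma$ forbids any $\gamma$-entry from serving as the ``1''. Concretely, one argues by the relative order of the lowest entry $d$ of $\delta$ against the entries of $\gamma$: either every $\gamma$-entry exceeds $d$ (so the first placement works), or some $c \in \gamma$ drops below $d$, in which case condition $(\ddagger)$ guarantees enough structure ``below the leftmost entry of $\rho$'' that relocating $x$ just above the relevant entry and invoking the \emph{no increasing bond} hypothesis again forces a substitution. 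I expect the main obstacle to be bookkeeping the interaction between the three hypotheses — $(\ddagger)$, ``starts with a descent'', and ``no increasing bond'' — so that in every subcase the substituting entry is genuinely unconstrained by the participating entries of the putative $\pi$-occurrence; the geometric intuition is clear (an increasing bond can never be created by a downward insertion adjacent in value), but enumerating the placements of the ``1'' and verifying the substitution is valid in each is the delicate part.
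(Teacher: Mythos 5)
Your proposal has the right general shape (split $\alpha$, force $x$ into the $\rho$-part, use the no-increasing-bond hypothesis to show a nearby entry substitutes for $x$), but there are two genuine gaps. First, your initial placement does not do what you claim. You put $x$ just below the lowest entry $d$ of $\delta$ and just to the right of the leftmost entry $a$ of $\alpha$, and in the sub-case where $a$ plays the $1$ you assert that, because $\rho$ starts with a descent, no other entry of $\alpha$ can participate, so $d$ substitutes for $x$. This is exactly backwards: it is the \emph{ascent} hypothesis in Theorem~\ref{theorem:bonds-inc} that rules out further $\alpha$-entries when $x$ sits above $\alpha$. Here $x$ is the first entry of $\rho$ and the second entry of $\rho$ lies \emph{below and to the right} of the first, which is precisely where the rest of $\alpha$ sits; a participating $\alpha$-entry then lies positionally between $x$ and $d$ and blocks the substitution. (Relatedly, you have misread Figure~\ref{figure:1plus-4}: the paper's first insertion is made in the $\epsilon$ region, with $x$ just to the right of the \emph{topmost entry of $\epsilon$} and just above the bottommost entry of $\alpha$, cutting $\alpha$ by value; the substitution argument there is against that entry of $\epsilon$, not against an entry of $\delta$.)

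Second, the hard case is not proved. You correctly identify that the argument breaks when the $1$ can be played below your splitting point, but the proposed fix ("relocating $x$ just above the relevant entry\dots $(\ddagger)$ guarantees enough structure") is a statement of intent, not an argument, and you never actually deploy condition $(\ddagger)$ or the descent hypothesis in a checkable way. The paper's resolution needs two ingredients you are missing: (i) a preliminary reduction (via closure of the hypotheses under inversion, plus Theorem~\ref{theorem:three-comp-sum}) to the case where $3$ precedes $2$ in $\pi$, which yields that at least three entries of $\pi$ precede the value $2$, at least two of them in $\rho$; and (ii) when $a$ plays the $1$ and the splitting point plays the $2$, condition $(\ddagger)$ forces every other entry of the occurrence into $\delta$, so that placing the second splitting point $x'$ just to the right of the \emph{leftmost} entry of $\delta$ leaves no room for the two or more entries of $\rho$ that must precede the $2$. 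Without (i) and (ii) the case analysis cannot be closed, so as written the proposal does not constitute a proof.
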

\begin{proof}
	Note that $\pi^{-1} = 1 \oplus \rho^{-1}$. If $\rho^{-1}$ starts with an ascent, then we can appeal to the previous cases (as $\Av(\pi)$ is deflatable if and only if $\Av(\pi^{-1})$ is). Thus, we can assume that $\rho^{-1}$ starts with a descent. In terms of $\pi$, this implies that $3$ precedes $2$. If $2$ is the third entry of $\pi$, it follows that the first three entries of $\pi$ are $132$, and since $|\pi| > 3$, this implies that $\pi$ is a three-component sum, and thus is handled by Theorem~\ref{theorem:three-comp-sum}. Therefore, we can assume that $\pi$ has at least three entries preceding $2$ (at least two of which are part of $\rho$).
	
	Let $\omega \in \Av(\pi)$ be indecomposable, with all the usual additional assumptions. Suppose that $\epsilon$ is non-empty. Let $e$ be the topmost entry in $\epsilon$ and let $a$ be the bottommost entry in $\alpha$. Form $\omega^+$ by inserting an entry $x$ just to the right of $e$ and just above $a$ as in Figure~\ref{figure:1plus-4}. Suppose this introduces an occurrence of $\pi$. It follows that $x$ plays a role in the $\rho$ part of $\pi$. If the $1$ of $\pi$ is in $\epsilon$ then the $e$ cannot be involved, otherwise $e$ and $x$ form an increasing bond. This would allow $e$ to substitute for $x$. So, the $1$ must be in $\alpha$, and in fact the only possibility is that $a$ plays the role of the $1$.
  
	Since $a$ plays the role of the $1$, the role of the $2$ (the least element of $\rho$) must be played by $x$. If an occurrence of $\pi$ is created, condition $(\ddagger)$ forces all entries of $\pi$ other than $1$ and $2$ to be in $\delta$. Pick the leftmost (lexicographically least by position) possibilities for these entries of $\pi$. An example is given in Figure~\ref{figure:1plus-5} with $\pi = 153264$.
	
\begin{figure}
	\minipage{0.5\textwidth}
		\begin{center}
			\begin{tikzpicture}[scale = .6]
				\draw[draw=none, fill=black!15] (2,0) rectangle ++(2,2);
				\draw[draw=none, fill=black!15] (0,2) rectangle ++(2,2);
				\draw[draw=none, fill=black!15] (2,4) rectangle ++(2,2);
				\draw[draw=none, fill=black!15] (4,2) rectangle ++(2,2);
				\draw[draw=none, fill=black!15] (0,0) rectangle ++(2,2);
				\draw[draw=none, fill=black!15] (4,1.5) rectangle ++(2,.5);
				\foreach \i in {2,4} {
					\draw (\i,0) -- (\i,6);
					\draw (0,\i) -- (6,\i);
				}
				\draw[fill]  (4.4,5) circle (.08) node [above] {$5$};
				\draw[fill]  (4.8,4.4) circle (.08) node [above] {$3$};
				\draw[fill]  (5.4,5.3) circle (.08) node [above] {$6$};
				\draw[fill]  (5.8,4.7) circle (.08) node [above] {$4$};
				\draw[fill] (5,1.5) circle (.08) node [below] {$e$};
				\draw[fill] (3,2) circle (.08) node [below] {$a$} node [left=5pt, above] {$1$};
				\draw[fill] (5.2, 2.2) circle (.08) node [above=2pt, right] {$x$} node [above] {$2$};
				\draw[dashed] (3,2.2)--(5.2,2.2)--(5.2,1.5);
				\node at (3,3) {$\alpha$};
			\end{tikzpicture}
		\end{center}
		\caption{A diagram showing a possible occurrence of $\pi$ in Theorem~\ref{theorem:bonds-dec1}.\ \\}
		\label{figure:1plus-5}
	\endminipage\hfill
	\minipage{0.5\textwidth}
		\begin{center}
			\begin{tikzpicture}[scale = .6]
				\draw[draw=none, fill=black!15] (2,0) rectangle ++(2,2);
				\draw[draw=none, fill=black!15] (0,2) rectangle ++(2,2);
				\draw[draw=none, fill=black!15] (2,4) rectangle ++(2,2);
				\draw[draw=none, fill=black!15] (4,2) rectangle ++(2,2);
				\draw[draw=none, fill=black!15] (0,0) rectangle ++(2,2);
				\draw[draw=none, fill=black!15] (4,1.5) rectangle ++(2,.5);
				\draw[draw=none, fill=black!15] (4,0) rectangle ++(.5,2);
				\draw[draw=none, fill=black!15] (4,4) rectangle ++(.4,2);
				\foreach \i in {2,4} {
					\draw (\i,0) -- (\i,6);
					\draw (0,\i) -- (6,\i);
				}     
				\draw[fill]  (4.4,5) circle (.08) node [above] {$$};
				\draw[fill] (5,1.5) circle (.08) node [below] {$e$};
				\draw[fill] (3,2) circle (.08) node [below] {$a$} node [above] {$$};
				\draw[fill] (4.6, 2.2) circle (.08) node [above=2pt, right] {$x'$} node [above] {$$};
				\draw[dashed] (3,2.2)--(4.6,2.2)--(4.6,5);
				\node at (3,3) {$\alpha$};
			\end{tikzpicture}
		\end{center}
		\caption{The new placement of a splitting entry in the proof of Theorem~\ref{theorem:bonds-dec1}.}
		\label{figure:1plus-6}
	\endminipage
\end{figure}
	
	In this case, there can be no entry in $\epsilon$ which lies to the left of the entry which played the role of the first entry of $\rho$ (in the example above, $\epsilon$ has no entry which lies to the left of the entry marked ``$5$''): otherwise that entry could would have played the $1$ in an occurrence of $\pi$ in $\omega$ which involved $e$ as the $2$ and the same remaining entries in $\delta$. Now, place a new splitting entry $x'$ just above $a$ and just to the right of the leftmost entry of $\delta$ (which may or may not be one of the entries in the occurrence of $\pi$), as in Figure~\ref{figure:1plus-6}.
  
Now, we have forced $a$ to play the role of the $1$ in any occurrence of $\pi$ which involves $x'$. Again, $x'$ must play the role of the $2$ and the remaining entries of $\pi$ would have to be in $\delta$. However, since we have already shown that three entries of $\pi$ must precede $2$, this is impossible. Therefore, the introduction of $x'$ does not introduce an occurrence of $\pi$, and indeed $\omega^+ \in \Av(\pi)$.

If $\epsilon$ were actually empty, then the splitting created by $x'$ works for the same reason.
\end{proof}

We continue under the assumption that $\pi = 1 \oplus \rho$ where $\rho$ starts with a descent and has no increasing bond. Assume that $\pi$ does not satisfy condition $(\ddagger)$; that is, assume that the first entry of $\rho$ is less than every entry to the right of the entry $2$. Moreover, we can assume that $\pi^{-1}$ also fails $(\ddagger)$. In terms of $\pi$, this translates to the property that the entry $2$ precedes every entry which has value greater than the first entry of $\rho$.

After a little inspection, one can see if $\pi$ and $\pi^{-1}$ both fail condition $(\ddagger)$, then either $\pi$ is a sum of three or more components or $\pi$ has the form $1n\cdots 2$ (by this, we do not mean that $\rho$ is decreasing, just that $\rho$ starts with its biggest entry and ends with its smallest entry). The former case is already proved, so we only need to prove the latter.

\begin{theorem}
	\label{theorem:bonds-dec2}
	Suppose that $\pi = 1 \oplus \rho$ is of the form $1n \cdots 2$ and $\rho$ has no increasing bond. Then $\Av(\pi)$ is not deflatable.
\end{theorem}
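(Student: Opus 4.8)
The plan is to show that $\Av(\pi)$ is \emph{extendible} and then invoke Lemma~\ref{lemma:extendible}. We may assume $n\geq 4$ (the case $\pi=132$ is excluded by Lemma~\ref{lemma:embed-indec}), so Lemma~\ref{lemma:embed-indec} reduces us to indecomposable non-simple $\omega\in\Av(\pi)$, and we use the standard setup of this section: $\alpha$ is a longest maximal interval of $\omega$, with corner regions $\beta,\gamma,\delta,\epsilon$ as in Figure~\ref{figure:typical-omega}; since $\lambda=1$ we have $\beta$ empty and $\delta\in\Av(\rho)$, and indecomposability forces $\gamma\cup\delta$ and $\gamma\cup\epsilon$ each to be non-empty. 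Since $\pi^{-1}$ is again of the form $1n\cdots 2$ and $\rho^{-1}$ again has no increasing bond, $\pi$ meets the hypotheses if and only if $\pi^{-1}$ does; as inversion exchanges $\gamma$ with $\epsilon$, we may assume $\gamma$ is non-empty.

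The engine of the proof is a restriction on which entry of $\pi$ a well-placed cut point of $\alpha$ can realize. Let $a$ be the lowest and $a^{\ast}$ the highest entry of $\alpha$, and let $w$ be the entry of $\omega$ immediately to the right of $\alpha$ (which must exist, else $\omega$ is skew-decomposable; $w$ lies in $\delta$ or $\epsilon$). First consider the extension $\omega^{+}$ obtained by inserting $x$ just to the right of $w$ at a value just below that of $a^{\ast}$: then $x$ cuts $\alpha$ by value, $w$ separates $x$ from $\alpha$ by position so $\alpha\cup\{x\}$ is not an interval, and Lemma~\ref{lemma:MI} makes $\omega^{+}$ indecomposable with $\SD(\omega^{+})<\SD(\omega)$. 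If $\omega^{+}$ contained an occurrence of $\pi$, then $x$ would be one of its entries; but since every entry of $\pi$ other than its first lies above and to the right of that first entry, $x$ playing the $1$ of $\pi$ would force all of $\rho$ into the region above $a^{\ast}$ and right of $x$, which lies in $\delta$ --- impossible since $\delta\in\Av(\rho)$. So $x$ is never the $1$ of $\pi$. When $\epsilon$ is empty (so $\gamma,\delta$ are non-empty and $w\in\delta$), the same $x$ finishes the argument: using that the second entry of $\pi$ is its maximum and its last entry has the second-smallest value, one checks directly that $x$ can be neither the maximum $n$ of $\pi$ (which would need the $n-2\geq 2$ remaining entries of $\rho$ in the empty region right of $\alpha$ and below $x$), nor the last entry $2$ (which would confine $n-2$ entries to $\{a^{\ast},w\}$, since the $1$ of $\pi$ is forced into $\alpha$ and hence lies to the right of every entry of $\gamma$, making those entries unavailable), nor any interior entry of $\rho$ (the entry $2$ would then have to sit below and right of $x$, a region that is empty). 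Hence $\omega^{+}\in\Av(\pi)$ and $\omega$ is breakable.

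It remains to treat $\gamma$ and $\epsilon$ both non-empty; here a single one-point extension need not avoid $\pi$, and the approach mirrors the proof of Theorem~\ref{theorem:bonds-dec1}. Let $e$ be the topmost entry of $\epsilon$ and form $\omega^{+}$ by inserting $x$ just to the right of $e$ and just above $a$; again $x$ cuts $\alpha$ by value, $\omega^{+}$ is an indecomposable extension with smaller $\SD$, and as before $x$ cannot be the $1$ of $\pi$ (this would place $\rho$ inside $\delta$). If $x$ plays the maximum $n$ of $\pi$, then --- since $n$ is the \emph{second} entry of $\pi$ --- the $1$ of $\pi$ lies to the left of $e$ inside $\epsilon$ and the remaining $n-2$ entries of $\rho$ lie in $\epsilon$ to the right of $e$; if some entry of $\delta$ occurs between these entries and $e$, then (using that $\rho$ has no increasing bond, together with the fact that the only entries which could separate $x$ from such a $\delta$-entry by value are entries of $\alpha$, none of which are used) that $\delta$-entry substitutes for $x$, yielding a copy of $\pi$ in $\omega$, a contradiction; and otherwise no entry of $\delta$ lies to the left of the entry realizing the first entry of $\rho$, whereupon we reposition, inserting $x'$ just above $a$ and just to the right of the leftmost entry of $\delta$, and argue exactly as in the second half of the proof of Theorem~\ref{theorem:bonds-dec1} --- now using that \emph{all} $n-1\geq 3$ entries of $\pi$ precede its $2$ --- that a forced occurrence of $\pi$ would require three entries to the left of $2$ inside too small a set. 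The cases where $x$ plays the last entry $2$ or an interior entry of $\rho$ are handled in the same style, by a substitution argument (for interior entries the substituting entry is $w$) or by a repositioned cut point.

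The main obstacle is exactly this last case. As in Case~2b of Theorem~\ref{theorem:33proof} and throughout Theorem~\ref{theorem:bonds-dec1}, one cannot win with a single cut point: one must first extract the structural consequence of the initial cut point failing --- that certain entries cannot appear to the left of the entry realizing the first entry of $\rho$, and that (since $2$ is the final entry of $\pi$) at least three entries always precede it --- and then choose a second cut point tailored to that information. The delicate part is the bookkeeping: checking that each repositioned point still cuts $\alpha$, that $\alpha$ together with it is still not an interval so that Lemma~\ref{lemma:MI} applies and $\SD$ drops, and that it genuinely introduces no occurrence of $\pi$.
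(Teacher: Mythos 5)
Your overall strategy (extendibility via one-point extensions and Lemmas~\ref{lemma:embed-indec}, \ref{lemma:MI}, and \ref{lemma:extendible}) matches the paper's, and your first case ($\epsilon$ empty) uses essentially the paper's own placement --- a point just to the right of the leftmost entry of $\delta$ and just below the topmost entry of $\alpha$ --- and is sound, except that ``confining $n-2$ entries to $\{a^{\ast},w\}$'' is not yet a contradiction when $n=4$: you still need to observe that those two entries would have to realize the descent $43$ of $1432$, which the ascent $a^{\ast}<w$ cannot do. The genuine gap is in your second case, where $\gamma$ and $\epsilon$ are both non-empty. There you insert $x$ just to the right of the topmost entry $e$ of $\epsilon$ and just above the bottommost entry $a$ of $\alpha$, but you only work through the sub-case in which $x$ plays the maximum of $\pi$; the sub-cases in which $x$ plays the final entry $2$ or an interior entry of $\rho$ are dismissed as ``handled in the same style,'' and your closing paragraph concedes the bookkeeping is not done. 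This matters: your argument does not rule out the scenario in which $a$ plays the $1$ and the pattern of $\rho$ with its last entry deleted occurs among entries of $\delta$ lying to the left of $e$; then your $x$ completes a genuine occurrence of $\pi$ (as its last entry), no entry of $\omega$ can substitute for $x$ (the candidates in $\epsilon$ lie below $a$, and those in $\alpha$ or $\gamma$ lie to the left of the participating $\delta$-entries), and a different cut point is genuinely required. Moreover, the fallback you propose --- ``argue exactly as in the second half of Theorem~\ref{theorem:bonds-dec1}'' --- is not available: that argument forces the remaining entries into $\delta$ via condition $(\ddagger)$, and a permutation of the form $1n\cdots 2$ fails $(\ddagger)$ by definition, since nothing lies to the right of its $2$; that failure is precisely why this theorem is stated separately.

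A second symptom of the same problem is that your proof never genuinely uses the hypothesis that $\rho$ has no increasing bond (it appears only in one parenthetical where it does no work; the substitution of a $\delta$-entry for $x$ playing the maximum needs only a positional condition). The paper instead splits on whether the leftmost entry of $\delta$ precedes the leftmost entry of $\epsilon$, and in the remaining case places the cut point just below the topmost entry of $\alpha$ and just to the \emph{left} of the bottommost entry $d'$ of $\delta$; the no-increasing-bond hypothesis is then exactly what guarantees that $x$ and $d'$ cannot both participate in an occurrence (once $a$ and $\gamma$ are excluded they are adjacent both in position and in value among the participating entries), so that $d'$ substitutes for $x$. To repair your argument you would need to supply and fully verify a second placement of this kind for the sub-cases you left open.
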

\begin{proof}
	Let $a$ be the topmost entry of $\alpha$, let $d$ be the leftmost entry of $\delta$, and let $e$ be the leftmost entry of $\epsilon$ (if it exists). We proceed in two cases. Suppose first that $\epsilon$ is empty or that $d$ precedes $e$. Place a splitting entry $x$ just to the right of $d$ and just below $a$, as in Figure~\ref{figure:1plus-7}.
	
\begin{figure}
	\minipage{0.5\textwidth}
		\begin{center}
			\begin{tikzpicture}[scale = .6]
				\draw[draw=none, fill=black!15] (2,0) rectangle ++(2,2);
				\draw[draw=none, fill=black!15] (0,2) rectangle ++(2,2);
				\draw[draw=none, fill=black!15] (2,4) rectangle ++(2,2);
				\draw[draw=none, fill=black!15] (4,2) rectangle ++(2,2);
				\draw[draw=none, fill=black!15] (0,0) rectangle ++(2,2);
				\draw[draw=none, fill=black!15] (4,0) rectangle ++(.8,2);
				\draw[draw=none, fill=black!15] (4,4) rectangle ++(.4,2);
				\foreach \i in {2,4} {
					\draw (\i,0) -- (\i,6);
					\draw (0,\i) -- (6,\i);
				}
				\draw[fill]  (4.4,5) circle (.08) node [above = 4pt, right] {$d$};
				\draw[fill]  (4.8,1) circle (.08) node [right] {$e$};
				\draw[fill] (3,4) circle (.08) node [above] {$a$};
				\draw[fill] (4.6, 3.8) circle (.08) node [below] {$x$};
				\draw[dashed, thick] (3,3.8)--(4.6,3.8)--(4.6,5);
				\node at (3,3) {$\alpha$};
			\end{tikzpicture}
		\end{center}
		\caption{A diagram corresponding to $\omega^+$ in the first case of Theorem~\ref{theorem:bonds-dec2}.}
		\label{figure:1plus-7}
	\endminipage\hfill
	\minipage{0.5\textwidth}
		\begin{center}
			\begin{tikzpicture}[scale = .6]
				\draw[draw=none, fill=black!15] (2,0) rectangle ++(2,2);
				\draw[draw=none, fill=black!15] (0,2) rectangle ++(2,2);
				\draw[draw=none, fill=black!15] (2,4) rectangle ++(2,2);
				\draw[draw=none, fill=black!15] (4,2) rectangle ++(2,2);
				\draw[draw=none, fill=black!15] (0,0) rectangle ++(2,2);
				\draw[draw=none, fill=black!15] (4,0) rectangle ++(.4,2);
				\draw[draw=none, fill=black!15] (4,4) rectangle ++(.6,2);
				\draw[draw=none, fill=black!15] (4,4) rectangle ++(2,.4);
				\foreach \i in {2,4} {
					\draw (\i,0) -- (\i,6);
					\draw (0,\i) -- (6,\i);
				}
				\draw[fill]  (4.6,5) circle (.08) node [above=3pt, right] {$d$};
				\draw[fill]  (4.4,1) circle (.08) node [right] {$e$};
				\draw[fill]  (5.3,4.4) circle (.08) node [right=3pt,above] {$d'$};
				\draw[fill] (3,4) circle (.08) node [above] {$a$};
				\draw[fill] (5.1, 3.8) circle (.08) node [below] {$x$};
				\draw[dashed, thick] (3,3.8)--(5.1,3.8)--(5.1,4.4);
				\node at (3,3) {$\alpha$};
			\end{tikzpicture}
		\end{center}
		\caption{A diagram corresponding to $\omega^+$ in the second case of Theorem~\ref{theorem:bonds-dec2}.}
		\label{figure:1plus-8}
	\endminipage
\end{figure}

In this case, $x$ must play a role in the $\rho$. Therefore the $1$ of any occurrence of $\pi$ is in $\alpha$ (and is not $a$). If $x$ is not the $2$, then there is no place for the $2$. Only $a$ and $d$ can be entries of $\pi$ other than the $1$ or $2$. Therefore, $\pi = 132$ (a known case) or $\pi = 1342$ (not of the form $1n\cdots 2$). Hence, this case is complete.

Suppose instead that $e$ precedes $d$. Let $d'$ be the bottommost entry in $\delta$ (it is possible that $d=d'$). Place the splitting entry $x$ just below $a$ and just to the left of $d'$, as in Figure~\ref{figure:1plus-8}. Suppose there is an occurrence of $\pi$. Then, $x$ must play a role in the $\rho$ of such an occurrence.

If the $1$ of this occurrence is in $\alpha$ and $a$ is not involved, then since $x$ and $d'$ cannot both be involved, $d'$ can substitute for $x$. Thus $a$ must be involved. If $x$ is not the $2$, then there is no place to put the $2$. This forces $x$ to be the $2$ and $a$ to be the $3$. However, unless $\pi = 132$, there is no place now for the biggest entry of $\pi$.

If the $1$ of this occurrence is in $\epsilon$, the we can substitute $d'$ for $x$. This completes the second case, and the proof.
\end{proof}

We have now disposed of the case where $\pi = 1 \oplus \rho$ where $\rho$ starts with a descent and has no increasing bond. The last case we handle is when $\pi = 1 \oplus \rho$ where $\rho$ starts with a descent and has no decreasing bond. As before, we can further assume that $\rho^{-1}$ starts with a descent, i.e., that $3$ precedes $2$ in $\pi$. The two proofs below largely mirror the previous two proofs, with some small changes in the easy cases.

\begin{theorem}
	\label{theorem:bonds-dec3}
	Suppose that $\pi = 1 \oplus \rho$ satisfies condition $(\ddagger)$, and that $\rho$ starts with a descent and has no decreasing bond. Then $\Av(\pi)$ is not deflatable.
\end{theorem}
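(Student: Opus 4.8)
The plan is to repeat, in mirror image, the argument of Theorem~\ref{theorem:bonds-dec1}: its hypotheses and ours differ only in which bond $\rho$ lacks, so the same sequence of one-point extensions should go through once each appeal to ``$\rho$ has no increasing bond'' is replaced by an appeal to ``$\rho$ has no decreasing bond,'' and each splitting point is placed on the reflected sides (an increasing-bond obstruction is forced when the anchoring entry sits on the lower-left/upper-right diagonal side of the new point, a decreasing-bond obstruction when it sits on the upper-left/lower-right side).

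First I would clear away the reductions exactly as in Theorem~\ref{theorem:bonds-dec1}. Since $\pi^{-1}=1\oplus\rho^{-1}$, since deflatability is a symmetry invariant, and since sum-indecomposability of $\rho$ and the absence of a decreasing bond are preserved by inversion, if $\rho^{-1}$ starts with an ascent then $\Av(\pi^{-1})$, hence $\Av(\pi)$, is covered by Theorem~\ref{theorem:bonds-inc}. So assume $\rho^{-1}$ starts with a descent, i.e.\ that $3$ precedes $2$ in $\pi$. If $2$ were the third entry of $\pi$, the first three entries would be $132$ and, since $|\pi|>3$, $\pi$ would be a sum of at least three components, handled by Theorem~\ref{theorem:three-comp-sum}. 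Hence at least three entries of $\pi$ --- at least two of them lying in $\rho$ --- precede $2$; this is precisely the situation reached early in the proof of Theorem~\ref{theorem:bonds-dec1}.

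Now fix an indecomposable non-simple $\omega\in\Av(\pi)$, a longest maximal interval $\alpha$, and the regions $\beta,\gamma,\delta,\epsilon$ of Figure~\ref{figure:typical-omega}. Since $\lambda=1$ we may take $\beta$ empty, and $\delta\in\Av(\rho)$ with $\delta$ non-empty (else $\omega$ is skew-decomposable); by Lemmas~\ref{lemma:MI} and~\ref{lemma:extendible} it suffices to exhibit a point $x$ that cuts $\alpha$, such that $\alpha\cup\{x\}$ is not an interval and $\omega\cup\{x\}\in\Av(\pi)$. Following Theorem~\ref{theorem:bonds-dec1}, I would treat $\epsilon\neq\emptyset$ first (the case $\epsilon=\emptyset$, where then $\gamma\neq\emptyset$, being the fallback). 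Take a suitable extreme entry of $\epsilon$ as an anchor $y$ and a suitable extreme entry $a$ of $\alpha$, and insert $x$ immediately beside $y$ and beside $a$, choosing the two directions so that (a) $y$ lies between $\alpha$ and $x$ in one coordinate, which forces $\alpha\cup\{x\}$ not to be an interval, and (b) any bond $x$ could form with a neighbor in an occurrence of $\pi$ is a \emph{decreasing} bond, which $\rho$ does not possess. Then, as in Theorem~\ref{theorem:bonds-dec1}: if the insertion introduced an occurrence of $\pi$, $x$ must play a role in the $\rho$ part (else $\rho\leq\delta$), so the ``$1$'' of $\pi$ lies in $\epsilon$ or equals $a$; in the former case $y$ cannot be used (decreasing bond with $x$) and hence substitutes for $x$, contradicting $\omega\in\Av(\pi)$; in the latter, $x$ is forced to play the role of $2$, condition $(\ddagger)$ then forces every other entry of $\pi$ into $\delta$, and after noting (as in Theorem~\ref{theorem:bonds-dec1}) that no entry of $\epsilon$ lies left of the witness for the first entry of $\rho$, one passes to a second splitting point $x'$ placed beside $a$ and just past the leftmost entry of $\delta$, which again forces all but two entries of $\pi$ into $\delta$ --- impossible, since three entries of $\pi$ precede $2$. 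When $\epsilon=\emptyset$, the point $x'$ alone finishes the argument for the same reason.

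As with Theorem~\ref{theorem:bonds-dec1}, I expect the real work to lie entirely in the bookkeeping: choosing the \emph{exact} four sides for $x$ and for $x'$ so that simultaneously $x$ cuts $\alpha$, $\alpha\cup\{x\}$ fails to be an interval, every bond $x$ can enter is decreasing, and the ``$y$ substitutes for $x$'' step is legitimate (no entry of the occurrence separates $y$ from $x$ in either coordinate); and then carefully tracking how condition $(\ddagger)$ and the ``at least three entries before $2$'' reduction interact through the two nested placements, including the mildly delicate sub-case where $\epsilon$ is non-empty but the ``$1$'' is nonetheless forced to be $a$. None of this should require an idea beyond those already used in Theorem~\ref{theorem:bonds-dec1}.
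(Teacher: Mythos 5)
Your overall architecture matches the paper's proof exactly: the same reductions (via $\pi^{-1}$, reducing to ``$3$ precedes $2$'' and ``at least three entries of $\pi$ precede $2$''), a first splitting point anchored in $\epsilon$ whose illegality forces $a$ to play the $1$ and the new point to play the $2$, condition $(\ddagger)$ pushing the rest of $\pi$ into $\delta$, and a second splitting point $x'$ just past the leftmost entry of $\delta$ finishing the argument (with the $\epsilon=\emptyset$ case handled by $x'$ alone, exactly as in Theorem~\ref{theorem:bonds-dec1}).

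However, there is a genuine gap in the central construction, and it sits precisely where you assert that the ``two directions'' can be chosen so that simultaneously (a) the anchor $y\in\epsilon$ separates $\alpha\cup\{x\}$ and (b) the pair $\{x,y\}$ is decreasing. These two requirements are incompatible: every entry of $\epsilon$ lies to the right of $\alpha$ and below it, so if $y$ is to lie between $\alpha$ and $x$ in some coordinate while $x$ still cuts $\alpha$, then $x$ must end up to the upper right of $y$, and $\{y,x\}$ is an \emph{increasing} pair --- which is why this placement works in Theorem~\ref{theorem:bonds-inc}/\ref{theorem:bonds-dec1} but is useless here. To get a decreasing pair you must put $x$ just \emph{to the left of} and above $e$ (the topmost entry of $\epsilon$), and then $e$ no longer separates $\alpha\cup\{x\}$; the separator has to be supplied by a different entry. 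The paper does this by splitting on whether $d$ (the leftmost entry of $\delta$) precedes $e$ or not: if $d$ precedes $e$, then $d$ itself separates $\alpha$ from $x$ by position; if $e$ precedes $d$, one must first observe that $e$ cannot also be the leftmost entry of $\epsilon$ (otherwise $\alpha\cup\{e\}$ would be an interval, contradicting maximality of $\alpha$, using that $\beta$ is empty), and the resulting second entry of $\epsilon$ to the left of $e$ provides the separator. Neither the case split nor the maximality argument appears in your plan, and without them the construction you describe does not produce a legal one-point extension. (A smaller discrepancy: in the $d$-precedes-$e$ case the paper also needs the intermediate step that an entry $z\in\epsilon$ preceding $\hat d$ would already yield $\pi\le\omega$; you do gesture at the analogous step from Theorem~\ref{theorem:bonds-dec1}, so I count that as present in spirit.)
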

\begin{proof}
	If $\epsilon$ is empty we can use the same splitting construction as in the proof of Theorem~\ref{theorem:bonds-dec1}. So, assume now that $\epsilon$ is non-empty. Let $a$ be the bottommost entry of $\alpha$, let $d$ be the leftmost entry of $\delta$, and let $e$ be the topmost entry of $\epsilon$. We handle two separate cases. First assume that $e$ precedes $d$. In this case, we must have that $e$ is not also the leftmost entry of $\epsilon$, or else $\alpha$ is not a maximal interval. Place a splitting entry $x$ just above $a$ and just to the left of $e$, as shown in Figure~\ref{figure:1plus-9}.

\begin{figure}
	\minipage{0.5\textwidth}
		\begin{center}
			\begin{tikzpicture}[scale = .6]
				\draw[draw=none, fill=black!15] (2,0) rectangle ++(2,2);
				\draw[draw=none, fill=black!15] (0,2) rectangle ++(2,2);
				\draw[draw=none, fill=black!15] (2,4) rectangle ++(2,2);
				\draw[draw=none, fill=black!15] (4,2) rectangle ++(2,2);
				\draw[draw=none, fill=black!15] (0,0) rectangle ++(2,2);
				\draw[draw=none, fill=black!15] (4,0) rectangle ++(.4,2);
				\draw[draw=none, fill=black!15] (4,4) rectangle ++(1.5,2);
				\draw[draw=none, fill=black!15] (4,1.5) rectangle ++(2,.5);
				\foreach \i in {2,4} {
					\draw (\i,0) -- (\i,6);
					\draw (0,\i) -- (6,\i);
				}
				\draw[fill]  (5.5,5) circle (.08) node [right] {$d$};
				\draw[fill]  (4.4,.7) circle (.08) node [above] {$$};
				\draw[fill]  (5.2,1.5) circle (.08) node [below] {$e$};
				\draw[fill] (3,2) circle (.08) node [below] {$a$};
				\draw[fill] (5, 2.2) circle (.08) node [above] {$x$};
				\draw[dashed, thick] (3,2.2)--(5,2.2)--(5,1.5);
				\node at (3,3) {$\alpha$};
			\end{tikzpicture}
		\end{center}
		\caption{A diagram corresponding to $\omega^+$ in the first case of Theorem~\ref{theorem:bonds-dec3}.}
		\label{figure:1plus-9}
	\endminipage\hfill
	\minipage{0.5\textwidth}
		\begin{center}
			\begin{tikzpicture}[scale = .6]
				\draw[draw=none, fill=black!15] (2,0) rectangle ++(2,2);
				\draw[draw=none, fill=black!15] (0,2) rectangle ++(2,2);
				\draw[draw=none, fill=black!15] (2,4) rectangle ++(2,2);
				\draw[draw=none, fill=black!15] (4,2) rectangle ++(2,2);
				\draw[draw=none, fill=black!15] (0,0) rectangle ++(2,2);
				\draw[draw=none, fill=black!15] (4,1.5) rectangle ++(2,.5);
				\draw[draw=none, fill=black!15] (4,4) rectangle ++(.3,2);
				\foreach \i in {2,4} {
					\draw (\i,0) -- (\i,6);
					\draw (0,\i) -- (6,\i);
				}
				\draw[fill] (4.3,5) circle (.08) node[right=4pt, above] {$d$};
				\draw[fill] (5,1.5) circle (.08) node [below] {$e$};
				\draw[fill] (3,2) circle (.08) node [below] {$a$};
				\draw[fill] (4.8, 2.2) circle (.08) node [above] {$x$};
				\draw[dashed, thick] (3,2.2)--(4.8,2.2)--(4.8,1.5);
				\node at (3,3) {$\alpha$};
			\end{tikzpicture}
		\end{center}
		\caption{A diagram corresponding to $\omega^+$ in the second case of Theorem~\ref{theorem:bonds-dec3}.}
		\label{figure:1plus-10}
	\endminipage
\end{figure}

If there is an occurrence of $\pi$ in $\omega^+$, then the $1$ of $\pi$ either lies in $\alpha$ or $\epsilon$. If the $1$ of $\pi$ lies in $\epsilon$, then we can substitute $e$ for $x$ since both cannot be involved. If the $1$ is in $\alpha$, then $a$ is the $1$ and $x$ is the $2$. Since $\pi$ satisfies condition $(\ddagger)$, all other entries must be in $\delta$, but then the first two entries $\pi$ are $12$, a contradiction.
  
Suppose instead that $d$ precedes $e$. Place a splitting entry $x$ just above $a$ and just to the left of $e$, as in Figure~\ref{figure:1plus-10}. Again, if the $1$ of an occurrence of $\pi$ is in $\epsilon$, then we can substitute $e$ for $x$. Thus, the $1$ is in $\alpha$, and $a=1$ and $x=2$. Since $\pi$ satisfies condition $(\ddagger)$, all other entries of $\pi$ lie in $\delta$. Define $\hat{\pi} = \pi \smallsetminus \{a,x\}$ and let $\hat{d}$ be the leftmost entry of $\hat{\pi}$. It is possible that $\hat{d} = d$. See Figure~\ref{figure:1plus-11}.
  
\begin{figure}
	\minipage{0.5\textwidth}
		\begin{center}
			\begin{tikzpicture}[scale = .6]
				\draw[draw=none, fill=black!15] (2,0) rectangle ++(2,2);
				\draw[draw=none, fill=black!15] (0,2) rectangle ++(2,2);
				\draw[draw=none, fill=black!15] (2,4) rectangle ++(2,2);
				\draw[draw=none, fill=black!15] (4,2) rectangle ++(2,2);
				\draw[draw=none, fill=black!15] (0,0) rectangle ++(2,2);
				\draw[draw=none, fill=black!15] (4,1.5) rectangle ++(2,.5);
				\draw[draw=none, fill=black!15] (4,4) rectangle ++(.3,2);
				\foreach \i in {2,4} {
					\draw (\i,0) -- (\i,6);
					\draw (0,\i) -- (6,\i);
				}
				\draw[thick] (4.7,4.2) rectangle (5.7, 5.8) node[right=4pt, above=0pt, midway] {$\hat{\pi}$};
				\draw[fill] (4.3,5) circle (.08) node[right=3pt, above] {$d$};
				\draw[fill] (4.7,4.7) circle (.08) node[right] {$\hat{d}$};
				\draw[fill] (5,1.5) circle (.08) node [below] {$e$};
				\draw[fill] (3,2) circle (.08) node [below] {$a$};
				\draw[fill] (4.8, 2.2) circle (.08) node [above] {$x$};
				\draw[dashed, thick] (3,2.2)--(4.8,2.2)--(4.8,1.5);
				\node at (3,3) {$\alpha$};
			\end{tikzpicture}
		\end{center}
		\caption{A diagram corresponding to $\omega^+$ in the second case of Theorem~\ref{theorem:bonds-dec3}.}
		\label{figure:1plus-11}
	\endminipage\hfill
	\minipage{0.5\textwidth}
		\begin{center}
			\begin{tikzpicture}[scale = .6]
				\draw[draw=none, fill=black!15] (2,0) rectangle ++(2,2);
				\draw[draw=none, fill=black!15] (0,2) rectangle ++(2,2);
				\draw[draw=none, fill=black!15] (2,4) rectangle ++(2,2);
				\draw[draw=none, fill=black!15] (4,2) rectangle ++(2,2);
				\draw[draw=none, fill=black!15] (0,0) rectangle ++(2,2);
				\draw[draw=none, fill=black!15] (4,1.5) rectangle ++(2,.5);
				\draw[draw=none, fill=black!15] (4,4) rectangle ++(.3,2);
				\draw[draw=none, fill=black!15] (4,0) rectangle ++(.7,2);
				\foreach \i in {2,4} {
					\draw (\i,0) -- (\i,6);
					\draw (0,\i) -- (6,\i);
				}
				\draw[thick] (4.7,4.2) rectangle (5.7, 5.8) node[right=4pt, above=0pt, midway] {$\hat{\pi}$};
				\draw[fill] (4.3,5) circle (.08) node[right=3pt, above] {$d$};
				\draw[fill] (4.7,4.7) circle (.08) node[right] {$\hat{d}$};
				\draw[fill] (5,1.5) circle (.08) node [below] {$e$};
				\draw[fill] (3,2) circle (.08) node [below] {$a$};
				\draw[fill] (4.5, 2.2) circle (.08) node [right=6pt, above] {$x'$};
				\draw[dashed, thick] (3,2.2)--(4.5,2.2)--(4.5,5);
				\node at (3,3) {$\alpha$};
			\end{tikzpicture}
		\end{center}
		\caption{A diagram corresponding to $\omega^+$ in the second case of Theorem~\ref{theorem:bonds-dec3}.}
		\label{figure:1plus-12}
	\endminipage
\end{figure}

If there is an entry $z$ in $\epsilon$ that precedes $\hat{d}$, then $z$ and $e$ can together play the same roles as $a$ and $x$, creating an occurrence of $\pi$ in $\omega$. If not, then place a splitting entry $x'$ just to the right of $d$ and just above $a$, as in Figure~\ref{figure:1plus-12}. If $x'$ creates another occurrence of $\pi$, then we must have $a=1$ and $x'=2$; this forces this entry of $\pi$ to be $2$. Since we know that $3$ precedes $2$, it follows that $\pi$ is a three-component sum, and we can appeal to Theorem~\ref{theorem:three-comp-sum}.
\end{proof}

Lastly, we consider the case in which $\pi$ fails condition $(\ddagger)$. As before, we may also assume that $\pi^{-1}$ fails condition $(\ddagger)$, leaving us only with the case $\pi = 1n\cdots 2$, where $\pi$ has no decreasing bond.

\begin{theorem}
	\label{theorem:bonds-dec4}
Let $\pi$ have the form $1n \cdots 2$, such that $\pi$ has no decreasing bond. Then $\Av(\pi)$ is not deflatable.
\end{theorem}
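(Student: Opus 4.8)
The plan is to follow the same strategy as the preceding theorems (Theorems~\ref{theorem:bonds-dec1}--\ref{theorem:bonds-dec3}): take an indecomposable $\omega \in \Av(\pi)$ with a longest maximal interval $\alpha$ and corner regions $\beta,\gamma,\delta,\epsilon$ as in Figure~\ref{figure:typical-omega}, use the standard reductions ($\beta$ empty since $\lambda = 1$, $\delta \in \Av(\rho)$, $\delta$ non-empty lest $\omega$ be skew-decomposable, and $\gamma$ non-empty by symmetry), and then exhibit a one-point extension $\omega^+$ by a cut point $x$ of $\alpha$ with $\alpha \cup \{x\}$ not an interval, so that by Lemma~\ref{lemma:MI} and Lemma~\ref{lemma:extendible} the class is not deflatable. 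Since $\pi = 1n\cdots 2$ has the form that both $\pi$ and $\pi^{-1}$ fail $(\ddagger)$, I get to exploit strong structural facts: the entry $2$ (least entry of $\rho$) is the last entry of $\pi$, the entry $n$ is the second entry of $\pi$, $\rho$ starts with a descent, and $\rho$ has no decreasing bond. In particular $\rho$ has no decreasing bond means that the descent $n \cdots$ at the start of $\rho$ is not a bond, so $n-1$ appears strictly to the right of the second position of $\rho$; and $2$ being last with no decreasing bond means $3$ does not immediately precede $2$.

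The main case split should mirror Theorem~\ref{theorem:bonds-dec3}: whether $\epsilon$ is empty (or $d$ precedes $e$) versus $e$ precedes $d$, where $a$ is the bottommost entry of $\alpha$, $d$ the leftmost entry of $\delta$, and $e$ the topmost entry of $\epsilon$. In the first subcase, place a splitting entry $x$ just above $a$ and just to the left of the leftmost entry of $\delta$ (analogous to Figure~\ref{figure:1plus-6} / Figure~\ref{figure:1plus-12}): if $x$ lies in the $\lambda = 1$ part we would force $\rho \le \delta$, so $x$ lies in $\rho$; the $1$ of $\pi$ must then be $a$ (entries of $\gamma$ are ruled out because $a$ lies below and to the right of all of $\gamma$ relative to $x$, and the leftmost-$\delta$ choice of $x$'s horizontal position blocks $\gamma$), which forces $x$ to play the role of the least entry of $\rho$, i.e. the final entry $2$ of $\pi$. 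But then all of $\hat\pi = \pi \setminus \{1,2\}$ must occur within $\delta$, and since $\delta \in \Av(\rho)$ and $\rho = \hat\pi$ with its initial $n$ and terminal $2$ — wait, more carefully: $\hat\pi$ occurring in $\delta$ together with $a$ playing $1$ and $x$ playing $2$ directly yields an occurrence of $\pi$ unless the $1$-role or $2$-role fails; but $a < x$ in value and $a$ is left of $x$, and $a$ is below everything in $\delta$ while $x$ is below $d$ — so the only obstruction is that $x$ might not be \emph{below} the entry of $\hat\pi$ playing the role of $3$ (the second-smallest of $\rho$). Since $x$ was placed just below $a$ and $a$ is the bottom of $\alpha$, and all of $\hat\pi \subseteq \delta$ sits above $\alpha$, this obstruction cannot arise — so $\pi \le \omega$, contradiction. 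In the second subcase ($e$ precedes $d$), place $x$ just above $a$ and just to the left of $e$ (as in Figure~\ref{figure:1plus-10}): if the $1$ of $\pi$ lies in $\epsilon$, then since $e$ and $x$ cannot both be used (they would need to be separated, but $x$ sits just left of and just above $a$, just below $e$) we substitute $e$ for $x$, giving $\pi \le \omega$; so the $1$ is $a$, $x$ is the final $2$, all of $\hat\pi$ lies in $\delta$, and as before we recover $\pi \le \omega$ unless some entry of $\epsilon$ lies to the left of the leftmost entry of $\hat\pi$ — in which case that entry, together with $e$, substitutes for $(a,x)$ and again yields $\pi \le \omega$ inside $\omega$; if no such $\epsilon$-entry exists, re-place the splitting point just to the right of $d$ and argue as in the final paragraph of Theorem~\ref{theorem:bonds-dec3} that, because $3$ precedes $2$ in $\pi$ and three entries precede $2$, no occurrence of $\pi$ is created.

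The hard part will be the bookkeeping in the second subcase: precisely controlling which entries of $\epsilon$ can ``substitute'' and ensuring that after re-placing the splitting point we genuinely cannot create $\pi$ — this is where the hypotheses ``$\pi$ fails $(\ddagger)$'' (so $2$ precedes every entry exceeding the first entry of $\rho$) and ``no decreasing bond'' (so $3$ does not immediately precede $2$, forcing at least three entries before $2$) do the real work, exactly as in Theorem~\ref{theorem:bonds-dec1} and Theorem~\ref{theorem:bonds-dec3}. I expect the write-up to be short, citing those two proofs for the repeated arguments and only spelling out the places where ``no decreasing bond'' replaces ``no increasing bond'' (which, under the antidiagonal symmetry $\pi \mapsto \pi^{-1}$-composed-with-reverse-complement, is essentially a mirror of the $1n\cdots 2$, no-increasing-bond case already handled in Theorem~\ref{theorem:bonds-dec2} — it may even be cleanest to note that the form $1n\cdots2$ is preserved by the reverse-complement symmetry and deduce Theorem~\ref{theorem:bonds-dec4} directly from Theorem~\ref{theorem:bonds-dec2}, if that symmetry swaps increasing and decreasing bonds).
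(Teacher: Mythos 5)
There is a genuine gap, and it comes from transplanting the $(\ddagger)$-style argument into a setting where its geometry is reversed. In Theorems~\ref{theorem:bonds-dec1} and \ref{theorem:bonds-dec3} the entry $2$ occurs early in $\pi$, so once $a$ plays the $1$ and the inserted point plays the $2$, the rest of $\pi$ must lie to the \emph{right} of the inserted point, hence in $\delta$, where $\delta\in\Av(\rho)$ and condition $(\ddagger)$ can be brought to bear. For $\pi=1n\cdots 2$ the entry $2$ is the \emph{last} entry, so if your $x$ (placed just above the bottommost entry $a$ of $\alpha$ and adjacent to the leftmost entry of $\delta$) plays the $2$, then all of $\hat\pi=\pi\setminus\{1,2\}$ must lie to the \emph{left} of $x$ and above it -- that is, inside $\alpha$ -- not in $\delta$ as you assert; indeed $\hat\pi\subseteq\delta$ is impossible under your own placement, since every entry of $\delta$ lies to the right of $x$. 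Worse, this placement can genuinely create a new copy of $\pi$: take $\pi=15342$ (which satisfies the hypotheses) and an $\omega$ whose longest maximal interval $\alpha$ contains a $1423$ pattern beginning at its bottommost entry; then $a$, the $423$ inside $\alpha$, and $x$ form $15342$, and no entry of $\omega$ can substitute for $x$ (entries of $\epsilon$ are too low, entries of $\delta$ too high). So your first subcase does not close, and the second subcase inherits the same defect when you invoke the final paragraph of Theorem~\ref{theorem:bonds-dec3}, whose ``all remaining entries lie in $\delta$'' step again depends on $2$ not being last.

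The paper's proof uses a different insertion entirely: with $a$ the \emph{topmost} entry of $\alpha$ and $d$ the \emph{bottommost} entry of $\delta$, it places $x$ just below $a$ and just to the right of $d$, so that $d$ and $x$ are adjacent in position and nearly adjacent in value; the ``no decreasing bond'' hypothesis then forces $a$ into any occurrence of $\pi$, forces $a$ to play the $n$ and $x$ the $n-1$, and a short value count leaves no room for the rest of $\pi$ unless $\pi=132$. Finally, your closing suggestion to deduce this theorem from Theorem~\ref{theorem:bonds-dec2} by symmetry does not work: reverse--complement sends $1n\cdots 2$ to $(n-1)\cdots 1\,n$, which is not of the required form, and in any case reverse--complement (like inversion and $180^\circ$ rotation) \emph{preserves} the orientation of bonds rather than exchanging increasing with decreasing ones; no symmetry of the square simultaneously preserves the form $1n\cdots2$ and swaps the two bond types, which is precisely why the paper must prove Theorems~\ref{theorem:bonds-dec2} and \ref{theorem:bonds-dec4} separately.
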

\begin{proof}
	Let $a$ be the topmost entry in $\alpha$ and let $d$ be the bottommost entry of $\delta$. Place a splitting point $x$ just below $a$ and just to the right of $d$, as in Figure~\ref{figure:1plus-13}.
	
\begin{figure}
	\minipage{0.5\textwidth}
		\begin{center}
			\begin{tikzpicture}[scale = .6]
				\draw[draw=none, fill=black!15] (2,0) rectangle ++(2,2);
				\draw[draw=none, fill=black!15] (0,2) rectangle ++(2,2);
				\draw[draw=none, fill=black!15] (2,4) rectangle ++(2,2);
				\draw[draw=none, fill=black!15] (4,2) rectangle ++(2,2);
				\draw[draw=none, fill=black!15] (0,0) rectangle ++(2,2);
				\draw[draw=none, fill=black!15] (4,4) rectangle ++(2,.4);
				\foreach \i in {2,4} {
					\draw (\i,0) -- (\i,6);
					\draw (0,\i) -- (6,\i);
				}   
				\draw[fill]  (5.3,4.4) circle (.08) node [above] {$d$};
				\draw[fill] (3,4) circle (.08) node [above] {$a$};
				\draw[fill] (5.5, 3.8) circle (.08) node [below] {$x$};
				\draw[dashed, thick] (3,3.8)--(5.5,3.8)--(5.5,4.4);
				\node at (3,3) {$\alpha$};
			\end{tikzpicture} 
			\caption{A diagram corresponding to $\omega^+$ in Theorem~\ref{theorem:bonds-dec4}.}
			\label{figure:1plus-13}
		\end{center}
	\endminipage\hfill
	\minipage{0.5\textwidth}
		\begin{center}
			\begin{tikzpicture}[scale = .6]
				\draw[draw=none, fill=black!15] (2,0) rectangle ++(2,2);
				\draw[draw=none, fill=black!15] (0,2) rectangle ++(2,2);
				\draw[draw=none, fill=black!15] (2,4) rectangle ++(2,2);
				\draw[draw=none, fill=black!15] (4,2) rectangle ++(2,2.7);
				\draw[draw=none, fill=black!15] (0,0) rectangle ++(2,2);
				\foreach \i in {2,4} {
					\draw (\i,0) -- (\i,6);
					\draw (0,\i) -- (6,\i);
				}
				\draw[fill=black] (2.2, 4.5) circle (.08) node[above] {$x$};
				\draw[fill=black] (5, 4.7) circle (.08) node[above] {$d$};
				\draw[fill=black] (2, 3) circle (.08) node[left] {$a$};
				\draw[fill=black] (.5, 4.2) circle (.08) node[left] {$c$};
				\draw[dashed] (2.2,3) -- (2.2, 4.5) -- (5, 4.5);
				\node at (3,3) {$\alpha$};
			\end{tikzpicture}
		\end{center}
		\caption{A diagram of the first case in the proof of Theorem~\ref{theorem:1...2}}
		\label{figure:1x2-1}
	\endminipage
\end{figure}

If there is an occurrence of $\pi$ and $a$ is not involved, then $d$ can substitute for $x$, as both cannot be involved simultaneously. Hence, $a$ must be involved, and there is no entry of $\pi$ in $\epsilon$. Moreover, $a$ cannot be the $1$ in an occurrence of $\pi$, since $x$ must be involved. Therefore, $a$ must be the $n$, forcing $x$ to be the $n-1$. It follows there is no allowed location for any other entries which play a role in $\pi$. Hence $\pi=132$. As $132$ is known to be deflatable, this is a contradiction.
\end{proof}

Theorems~\ref{theorem:three-comp-sum}-\ref{theorem:bonds-dec4} tell us that if a principal class $\Av(\pi)$ is deflatable for sum-decomposable $\pi$, then $\pi$ must have the form $1 \oplus \rho$, where $\rho$ is sum-indecomposable and contains both an increasing and decreasing bond. However, as the next theorem shows, it is still possible that $\pi$ can have these properties and $\Av(\pi)$ still be non-deflatable.


\begin{theorem}
	\label{theorem:1...2}
	Let $\pi = 1 \oplus \rho$ for $\rho$ of the form $x\cdots1$ with $x \neq 2$ and $x \neq |\rho|$, i.e., $\pi = 1z\cdots2$ with $z \neq 3$ and $z \neq |\pi|$. Then, $\Av(\pi)$ is not deflatable.
\end{theorem}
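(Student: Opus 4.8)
The plan is to show that $\Av(\pi)$ is \emph{extendible} and then invoke Lemma~\ref{lemma:extendible}. Fix an indecomposable non-simple $\omega \in \Av(\pi)$, a longest maximal interval $\alpha$, and the regions $\beta,\gamma,\delta,\epsilon$ of Figure~\ref{figure:typical-omega}. Since $\lambda = 1$, the general setup of this section lets us assume $\beta$ is empty and $\delta \in \Av(\rho)$; indecomposability of $\omega$ forces $\gamma$ and $\epsilon$ not to be both empty, and (with $\beta$ empty) it also forces $\delta$ and $\epsilon$ not to be both empty. The structural facts about $\pi$ that I would exploit are: its first entry has value $1$, its second entry has value $z$, its last entry has value $2$; since $z \neq 3$ the value $3$ occurs at some position strictly between $2$ and $n$; and since $z \neq n$ the value $n$ also occurs at some position strictly between $2$ and $n$. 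In particular $n \geq 5$, and considering positions $1,2$, the positions of the values $n$ and $3$, and position $n$ shows that $\pi$ contains $14352$ or $14532$ — a useful reminder that $\Av(\pi)$ is generous.

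The splitting construction should mirror those of Theorems~\ref{theorem:bonds-dec1}--\ref{theorem:bonds-dec4}: place a point $x$ with value just above the bottom (or just below the top) of $\alpha$ and with position just outside $\alpha$ on the right, immediately past the topmost point $e$ of $\epsilon$ or the leftmost point $d$ of $\delta$; the point one steps past then cuts $\alpha \cup \{x\}$, so $\alpha$ is genuinely split by value and $\SD$ drops by Lemma~\ref{lemma:MI}. I would split into cases according to whether $\epsilon$ is empty and, when it is not, according to the relative positions of the extreme points of $\gamma$, $\delta$, and $\epsilon$ — the same bookkeeping that appears in the proofs of Theorems~\ref{theorem:bonds-dec2} and~\ref{theorem:bonds-dec3}. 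In each case, suppose $\omega^+$ contains an occurrence of $\pi$; since $\omega$ avoids $\pi$, the point $x$ is used, and because $\delta \in \Av(\rho)$ the point $x$ cannot play the role of the initial $1$ of $\pi$ (that would force $\rho$ into $\delta$). Hence $x$ lies in the $\rho$-part, and the $1$ of $\pi$ is either the chosen corner point $a$ of $\alpha$ or a point of $\gamma$ or $\epsilon$.

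If the $1$ of $\pi$ lies in $\gamma$ or $\epsilon$, a substitution argument of the kind used repeatedly above (``$d$ substitutes for $x$'', ``$e$ substitutes for $x$'') shows that the relevant extreme point of that region can play $x$'s role, producing an occurrence of $\pi$ already in $\omega$ — a contradiction. If instead the $1$ of $\pi$ is the corner point $a \in \alpha$, then $x$ is pinned down to be the second entry of $\pi$ (value $z$) or the last entry (value $2$), and this is exactly where the hypotheses $z \neq 3$ and $z \neq n$ are used: they force the remaining entries of $\pi$ to be squeezed entirely into $\delta$ (impossible, as $\delta \in \Av(\rho)$), or else force $\pi$ to be a three-component sum (handled by Theorem~\ref{theorem:three-comp-sum}) or to equal a short pattern already known to be deflatable such as $132$ or $1342$ — all contradictions. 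As in Theorems~\ref{theorem:bonds-dec1} and~\ref{theorem:bonds-dec3}, in the sub-cases where this first choice of $x$ can still fail, I would fall back on a second splitting point $x'$ — placed just above $a$ and just to the right of the leftmost point of $\delta$ — and rerun the argument; now the surviving entries of $\pi$ demonstrably cannot be placed, so $\omega^+ \in \Av(\pi)$ and $\omega$ is breakable.

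I expect the main obstacle to be the case analysis itself: verifying that every configuration of the corner regions $\gamma,\delta,\epsilon$ is covered and, within each, that every way the new point could be slotted into a putative occurrence of $\pi$ is eliminated. The genuinely delicate sub-case is the one where the $1$ of $\pi$ lands on the corner point of $\alpha$ adjacent to $x$: there the argument is not a clean substitution but a placement/counting contradiction that must invoke both $z \neq 3$ and $z \neq n$ simultaneously, together with a possible appeal back to Theorem~\ref{theorem:three-comp-sum}.
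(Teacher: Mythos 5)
Your framework is the right one --- extendibility via Lemma~\ref{lemma:extendible}, splitting the longest maximal interval $\alpha$ by a point $x$ satisfying Lemma~\ref{lemma:MI}, and eliminating putative occurrences of $\pi$ by substitution arguments --- and that is indeed how the paper proceeds. But the proposal stops exactly where the proof begins: the choice of where to put $x$ and the verification that no occurrence of $\pi$ is created are the entire content of the theorem, and you have deferred both. The paper's argument is in fact short and specific: it takes $a$ to be the \emph{leftmost} entry of $\alpha$ and $d$ the \emph{bottommost} entry of $\delta$, splits into two cases according to whether some entry of $\gamma$ lies below $d$, and places $x$ just to the right of $a$ and either just below or just above $d$ (so $x$ cuts $\alpha$ by position and is separated from it by value). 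Your proposed placement is transposed (value inside $\alpha$'s range, position outside it, past an extreme point of $\epsilon$ or $\delta$), and your proposed case division (emptiness of $\epsilon$ and relative positions of the extremes of $\gamma,\delta,\epsilon$) is imported from Theorems~\ref{theorem:bonds-dec2} and~\ref{theorem:bonds-dec3}, which concern permutations with different structure (missing bonds, condition $(\ddagger)$). Nothing you have written shows that your placements eliminate all occurrences of $\pi$.

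More concretely, the sketched contradictions do not hold up. If $a$ plays the $1$ and $x$ plays the \emph{last} entry of $\pi$ (the value $2$), then the remaining entries of the occurrence form $\rho$ minus its last entry, not $\rho$, so ``$\delta \in \Av(\rho)$'' yields no contradiction; the paper disposes of this sub-case by showing $d$ substitutes for $x$, which works only because $x$ was placed just below $d$. Likewise the fallbacks you invoke are vacuous here: $\rho$ ends in its minimum, so $\pi = 1 \oplus \rho$ is exactly a two-component sum and Theorem~\ref{theorem:three-comp-sum} never applies, and the hypotheses $z \neq 3$, $z \neq |\pi|$ already exclude $132$ and $1342$. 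The actual use of $z \neq |\pi|$ is positional: when $a$ is the $1$ and $x$ the second entry, some entry of the occurrence must lie above $x$ and to the right of $\alpha$, while the final entry (value $2$) must lie below $x$, above $a$, and to the right of that --- and one checks there is no such location. That placement-by-placement check, for a correctly chosen $x$, is the missing proof.
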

\begin{proof}
	Let $\omega \in \Av(\pi)$ be indecomposable (and, as always, not simple). Let $a$ be the leftmost entry of $\alpha$ and let $d$ be the bottommost entry in $\delta$. We proceed in two separate cases.

	First suppose that there is an entry $c$ in $\gamma$ which lies below $d$. In this case, insert an entry $x$ just to the right of $a$ and just below $d$ (see Figure~\ref{figure:1x2-1}). Suppose that an occurrence of $\pi$ is created. If $x$ is the last entry of this $\pi$, then $d$ substitutes for $x$, contradicting the assumption that $\omega \in \Av(\pi)$. So suppose that $x$ is not the last entry of the occurrence of $\pi$.

	If $a$ is the $1$ of $\pi$, then $x$ must be the second entry of $\pi$, which is not the biggest entry of $\pi$. Therefore, $\pi$ must contain an entry larger than $x$ to the right, and hence larger than $d$ and to the right of $\alpha$. However, the final entry of $\pi$ must be smaller than $x$ and larger than $a$, and there is no such entry. If $\pi$ starts with an entry in $\gamma$, then since $x$ is not the last entry of an occurrence of $\pi$, there is no place for the last entry of $\pi$ anywhere. This completes the first case.
	
	Now assume otherwise, that no entry in $\gamma$ lies below $d$, as in Figure~\ref{figure:1x2-2}. Place a new entry $x$ just to the right of $a$ and just above $d$. An occurrence of $\pi$ cannot involve $x$ as the $1$, since otherwise the remainder of the pattern would lie in the upper right quadrant, and so the entry $a$ must play the role of the $1$.
	
	Thus, $x$ plays the role of the first entry of $\rho$, which is neither the biggest nor the smallest entry in $\rho$. However, the last entry of $\pi$ is $2$, and now we see that there is no place for the $2$.	
\begin{figure}
	\minipage{0.5\textwidth}
		\begin{center}
			\begin{tikzpicture}[scale = .6]
				\draw[draw=none, fill=black!15] (2,0) rectangle ++(2,2);
				\draw[draw=none, fill=black!15] (0,2) rectangle ++(2,2.9);
				\draw[draw=none, fill=black!15] (2,4) rectangle ++(2,2);
				\draw[draw=none, fill=black!15] (4,2) rectangle ++(2,2.7);
				\draw[draw=none, fill=black!15] (0,0) rectangle ++(2,2);
				\foreach \i in {2,4} {
					\draw (\i,0) -- (\i,6);
					\draw (0,\i) -- (6,\i);
				}
				\draw[fill=black] (2.2, 4.9) circle (.08) node[label=above:$x$] (x) {};
				\draw[fill=black] (5, 4.7) circle (.08) node[label=above:$d$] (c) {};
				\draw[fill=black] (2, 3) circle (.08) node[label=left:$a$] (a) {};
				\draw[dashed] (x) -- ++(2.8,0);
				\draw[dashed] (x) -- ++(0,-1.8);
				\node at (3,3) {$\alpha$};
			\end{tikzpicture}
		\end{center}
		\caption{A diagram of the second case in the proof of Theorem~\ref{theorem:1...2}}
		\label{figure:1x2-2}
	\endminipage\hfill
	\minipage{0.5\textwidth}
		\begin{center}
		\begin{tikzpicture}[scale=.3]
			\filldraw[dark-gray](0,3) rectangle (1,4);
			\filldraw[dark-gray](1,3) rectangle (2,4);
			\filldraw[dark-gray](1,6) rectangle (2,7);
			\filldraw[dark-gray](1,7) rectangle (2,8);
			\filldraw[dark-gray](2,3) rectangle (3,4);
			\filldraw[dark-gray](2,6) rectangle (3,7);
			\filldraw[dark-gray](2,7) rectangle (3,8);
			\filldraw[dark-gray](3,2) rectangle (4,3);
			\filldraw[dark-gray](3,3) rectangle (4,4);
			\filldraw[dark-gray](4,0) rectangle (5,1);
			\filldraw[dark-gray](4,1) rectangle (5,2);
			\filldraw[dark-gray](5,0) rectangle (6,1);
			\filldraw[dark-gray](5,1) rectangle (6,2);
			\filldraw[dark-gray](5,5) rectangle (6,6);
			\filldraw[dark-gray](5,6) rectangle (6,7);
			\filldraw[dark-gray](5,7) rectangle (6,8);
			\filldraw[dark-gray](5,8) rectangle (6,9);
			\filldraw[dark-gray](6,5) rectangle (7,6);
			\filldraw[dark-gray](7,3) rectangle (8,4);
			\filldraw[dark-gray](7,4) rectangle (8,5);
			\filldraw[dark-gray](8,3) rectangle (9,4);
			\filldraw[dark-gray](8,4) rectangle (9,5);

			\draw[black, fill=black] (1,2) circle (0.2);
			\draw[black, fill=black] (2,5) circle (0.2);
			\draw[black, fill=black] (3,1) circle (0.2);
			\draw[black, fill=black] (4,7) circle (0.2);
			\draw[black, fill=black] (5,3) circle (0.2);
			\draw[black, fill=black] (6,4) circle (0.2);
			\draw[black, fill=black] (7,8) circle (0.2);
			\draw[black, fill=black] (8,6) circle (0.2);

			\draw (0,0) grid (9,9);
		\end{tikzpicture}
		\caption{The permutation diagram of the permutation $25173486 \in \Av(251364)$.}
		\label{figure:25173486}
	\end{center}
	\endminipage
\end{figure}
\end{proof}

\section{Deflatable Permutation Classes}
\label{section:def}

Given the results of the previous section, one may wonder whether any principal classes are deflatable other than $\Av(12)$, $\Av(231)$ and their symmetries. For the larger group of finitely-based classes, the answer is clear: any class with finitely many simples (and infinitely many permutations) must be deflatable, and there are infinitely many such classes. Moreover, the results referred to in the introduction make use of the fact that many classes $\Av(\alpha, \beta)$ with $|\alpha| = |\beta| = 4$ turn out to be deflatable. In this section, we first provide a criterion by which we may prove deflatability of $\Av(\pi)$. We use this criterion to show examples of deflatable classes $\Av(\pi)$ for which $\pi$ is decomposable, simple, or neither.

In a deflatable class $\C$, there are permutations $\tau$ which cannot be extended to a simple permutation, i.e., there exists no simple $\sigma \in \C$ such that $\sigma \geq \tau$. Therefore, if we find a $\tau$ with this property in a class $\C$, it follows that $\C$ is deflatable. We call such a $\tau$ a \emph{witness of deflatability}.

In this section, we represent permutations by their diagrams, as shown in Section~\ref{section:introduction} and as produced by PermLab~\cite{albert:permlab}. A square in a permutation diagram is shaded gray if inserting an entry in that square would create a forbidden pattern. 

The lemma we now prove aids in finding witnesses of deflatability.

\begin{lemma}
	\label{lemma:witness-extension}
	Let $\omega \in \C$ contain a bond such that no entry may be placed in any square either horizontally or vertically between the two entries of the bond, with the possible exception of the four adjacent squares to the bond. 
	Then, $\omega$ cannot be extended to a simple permutation in $\C$.
\end{lemma}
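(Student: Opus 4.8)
The plan is to argue by contradiction: suppose $\omega$ is a proper subpermutation of some simple $\sigma \in \C$. Since $\omega$ is obtained from $\sigma$ by deleting some entries, the bond $\{p,q\}$ of $\omega$ (say $p$ immediately precedes $q$ in $\omega$, with $q$ the successor in value, or the decreasing analogue) consists of two entries of $\sigma$. In $\sigma$, these two entries need not be adjacent in position or in value, but between them --- positionally or by value --- can only lie entries of $\sigma$ that were deleted when forming $\omega$. The hypothesis says that no entry can be legally placed (without creating a basis element of $\C$) in any of the squares strictly between the two entries of the bond, horizontally or vertically, except possibly the four squares immediately flanking the bond. So I first want to show that, in fact, $\sigma$ restricted to the rows and columns strictly between $p$ and $q$ can only use those four exceptional squares.

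The key step is to pin down where the "extra" entries of $\sigma$ (those between $p$ and $q$) can sit. Because every entry of $\sigma$ lying positionally between $p$ and $q$ is, in particular, a valid one-point extension of $\omega$ inserted into that column range, it must occupy a square that is \emph{not} shaded gray in $\omega$ --- i.e.\ a square where insertion does not create a forbidden pattern. By hypothesis the only non-gray squares in the column-strip between $p$ and $q$ are the (at most) four squares adjacent to the bond; similarly for the row-strip between $p$ and $q$ by value. So any entry of $\sigma$ strictly between $p$ and $q$ in position must have value either immediately above or immediately below the bond (one of the two exceptional columns of squares), and any entry strictly between $p$ and $q$ in value must have position immediately before or immediately after the bond. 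A short case analysis then shows that such an entry, sitting in an exceptional square, together with $p$ and $q$ forms a $3$-element interval in $\sigma$ (the three entries occupy contiguous rows and contiguous columns) --- contradicting simplicity unless there are \emph{no} such entries at all.

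Once we know $\sigma$ has no entries positionally between $p,q$ and no entries by value between $p,q$, the pair $\{p,q\}$ is an interval of $\sigma$ of size $2$; since $|\sigma| > |\omega| \ge 2$, this is a nontrivial proper interval, contradicting that $\sigma$ is simple. That finishes the argument.

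The main obstacle is the exceptional-square analysis: handling the four adjacent squares carefully and checking that placing an entry there genuinely produces a $3$-element interval with the bond (so that it cannot occur in a simple $\sigma$), including the degenerate situations where two of the exceptional squares coincide or where the bond is at the boundary of $\omega$ and only two exceptional squares exist. One must also be careful that "between $p$ and $q$" is interpreted correctly for both increasing and decreasing bonds, but by applying one of the eight symmetries of pattern containment --- which preserve both simplicity and membership in a (suitably transformed) class --- it suffices to treat, say, the increasing bond case, and the decreasing case follows.
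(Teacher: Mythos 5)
Your overall strategy (every entry of $\sigma$ lying positionally or by value between the bond entries must occupy one of the exceptional squares, and such entries force a nontrivial interval) is in the spirit of the paper's proof, but the pivotal step is wrong as stated. You claim that an entry $z$ sitting in an exceptional square ``together with $p$ and $q$ forms a $3$-element interval in $\sigma$.'' That triple is indeed an interval of the one-point extension $\omega \cup \{z\}$, but it need not be an interval of $\sigma$, because the \emph{other} inserted entries can cut it. Concretely, for an increasing bond put $z_1$ in the column gap between $p$ and $q$ with value just above $q$, and $z_2$ in the row gap between $p$ and $q$ with position just after $q$ (both are exceptional squares): then $\{p,z_1,q\}$ is cut by value by $z_2$, and $\{p,q,z_2\}$ is cut by position by $z_1$, so neither triple is an interval of $\sigma$; the genuine nontrivial interval here is the $4$-element set $\{p,z_1,q,z_2\}$, of pattern $1432$. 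In general a cluster of inserted points around the bond has to be analysed as a whole, and the interval one eventually exhibits can be arbitrarily large, so your ``short case analysis'' on single entries cannot deliver the contradiction, and the conclusion ``unless there are no such entries at all'' does not follow.

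The paper deals with exactly this difficulty by forming the maximal box $\nu$ of the extension that contains the two bond points and is cut by no other point of $\omega$, extracting a nontrivial skew-indecomposable summand $\theta$ of $\nu$, and proving that $\theta$ itself is an interval of the extension. The crucial ingredient, absent from your argument, is that \emph{any} increasing pair inside $\nu$ can replace the original bond in an occurrence of $\omega$, so the forbidden-square hypothesis gets reapplied to many different pairs, not just to the original $p$ and $q$. Applying the hypothesis only to the original pair, as you do, is not enough to rule out a simple extension; to repair your proof you would need some analogue of this re-choosing of the bond, at which point you essentially reconstruct the paper's argument.
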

\begin{proof}
The figure below gives an example of the configuration in question, where the diagonally shaded quadrants may contain any entries.
	
	\begin{center}
		\begin{tikzpicture}[scale=.31]

			\filldraw[dark-gray] (3,0) rectangle (4,2);
			\filldraw[dark-gray] (5,3) rectangle (7,4);
			\filldraw[dark-gray] (3,5) rectangle (4,7);
			\filldraw[dark-gray] (0,3) rectangle (2,4);

			\draw[black, fill=black] (3,3) circle (0.2);
			\draw[black, fill=black] (4,4) circle (0.2);

			\draw[thick] (0,0) grid (7,7);
			
			\draw[pattern=north west lines, pattern color=black] (.5,.5) rectangle (2.5,2.5);
			\draw[pattern=north west lines, pattern color=black] (4.5,.5) rectangle (6.5,2.5);
			\draw[pattern=north west lines, pattern color=black] (.5,4.5) rectangle (2.5,6.5);
			\draw[pattern=north west lines, pattern color=black] (4.5,4.5) rectangle (6.5,6.5);
		\end{tikzpicture}
	\end{center}
			
	Let $\omega \in \C$ be as in the statement of the Lemma. Assume without loss of generality that the bond of interest is an increasing bond; the proof follows, mutatis mutandis, when the bond is a decreasing bond. Let $\omega^+ \in \C$ contain $\omega$, and fix an occurrence of $\omega$ in $\omega^+$. In the rest of the proof we refer to this occurrence as $\omega$. Let $\nu$ be the maximal box in $\omega^+$ which contains the two points from the bond of $\omega$, and which is cut by no other point of $\omega$. Note that we could replace these two points by any pair of points of pattern $12$ inside $\nu$ and still have an occurrence of $\omega$.
	
Since $\nu$ has at least one pair of increasing entries, we know that its skew-decomposition (which may have only one summand) has at least one non-trivial skew-indecomposable summand, which we will call $\theta$. Note that it is possible that $\theta = \nu$. We show that $\omega^+$ is not simple by showing that $\theta$ is an interval of $\omega^+$

Suppose toward a contradiction that $\omega^+$ has an entry $x$ which cuts $\theta$. Then, $x$ does not lie in $\nu$ because $\theta$ was chosen as an interval of $\nu$. So $x$ must lie in one of the four regions adjacent to $\nu$, and separated from $\nu$ by an element of $\omega$; without loss of generality, we assume that $x$ lies in the region above $\nu$. It follows that every entry of $\theta$ which lies to the left of $x$ is greater in value than every entry of $\theta$ which lies to the right of $x$; otherwise, $x$ would lie in the forbidden region defined by the embedded occurrence of $\omega$. This contradicts the assumption that $\theta$ is skew-indecomposable. Hence, $\theta$ is an interval of length greater than $1$ and thus $\omega^+$ is not simple.
\end{proof}

We can now proceed to identify a number of deflatable principal classes. For example, consider the diagram of the permutation $25173486 \in \Av(251364)$, as shown in Figure~\ref{figure:25173486}. By Lemma~\ref{lemma:witness-extension}, the permutation $25173486$ is a witness of deflatability for the class $\Av(251364)$, proving that $\Av(251364)$ is deflatable. We list below a sporadic collection of deflatable classes and witnesses which prove their deflatability. These witnesses were found through a mixture of computer search and ``by hand'' construction.

\begin{center}
	\begin{tabular}{||c|c||}
		\hline\hline
		Permutation Class & Witness of Deflatability\\
		\hline\hline
		$\Av(134652)$ & $6\;8\;9\;3\;4\;1\;10\;14\;7\;13\;5\;12\;11\;2$\\
		\hline
		$\Av(246135)$ & $4\;7\;2\;9\;11\;5\;6\;1\;10\;3\;8$\\
		\hline
		$\Av(246513)$ & $5\;9\;3\;11\;8\;2\;10\;6\;7\;1\;4$\\
		\hline
		$\Av(251364)$ & $2\;5\;1\;7\;3\;4\;8\;6$\\
		\hline
		$\Av(251463)$ & $2\;6\;1\;8\;4\;3\;7\;9\;5$\\
		\hline
		$\Av(254613)$ & $5\;9\;3\;11\;2\;8\;10\;6\;7\;1\;4$\\
		\hline
		$\Av(256413)$ & $4\;7\;9\;2\;10\;8\;5\;6\;1\;3$\\
		\hline
		$\Av(1523764)$ & $11\;18\;14\;16\;8\;19\;6\;7\;22\;13\;1\;10\;5\;24\;2\;3\;9\;17\;23\;4\;21\;20\;15\;12$\\
		\hline
		$\Av(2613475)$ & $2\;6\;1\;3\;9\;4\;5\;7\;10\;8$\\
		\hline
		$\Av(2631574)$ & $2\;6\;3\;1\;9\;5\;4\;8\;10\;7$\\
		\hline\hline
	\end{tabular}
\end{center}

One should first note that the classes $\Av(134652)$ and $\Av(1523764)$ are listed in the above table. That these classes are deflatable proves that, in fact, not all classes of the form $\Av(\pi)$ for decomposable $\pi$ are non-deflatable. Both classes consists of basis elements which have both increasing and decreasing bonds, in some sense justifying the care taken in the previous section when dealing with permutations which contained at most one type of bond.

Many of the other basis elements of classes in the list are simple. It is of particular interest that the class $\Av(246135)$ is deflatable, as it is a special type of simple permutation: a \emph{parallel alternation}. In fact, the classes $\Av(24681357)$, $\Av(2\;4\;6\;8\;10\;1\;3\;5\;7\;9)$, $\Av(2\;4\;6\;8\;10\;12\;1\;3\;5\;7\;9\;11)$, and $\Av(2\;4\;6\;8\;10\;12\;14\;1\;3\;5\;7\;9\;11\;13)$ are also deflatable, as shown by the witnesses
	\begin{center}
		\begin{tabular}{l}
			$5\;8\;11\;2\;13\;4\;14\;16\;18\;9\;10\;6\;1\;15\;17\;3\;7\;12$,\\
			$2\;7\;10\;13\;4\;16\;9\;18\;6\;20\;8\;22\;24\;14\;15\;11\;1\;19\;21\;3\;23\;5\;12\;17$, \\
			$3\;8\;13\;16\;5\;19\;9\;12\;21\;2\;7\;23\;11\;25\;27\;29\;17\;18\;14\;1\;22\;24\;4\;26\;6\;28\;10\;15\;20$, and\\
			$3\;8\;12\;16\;20\;5\;23\;9\;13\;18\;25\;2\;7\;27\;11\;29\;14\;31\;33\;35\;21\;22\;17\;1\;26\;28\;4\;30\;6\;32\;10\;15\;34\;19\;24$,
		\end{tabular}
	\end{center}
respectively. This leads to the following conjecture.

\begin{conjecture}
	Let $\pi$ be a parallel alternation with $|\pi| \geq 6$. Then, $\Av(\pi)$ is deflatable.
\end{conjecture}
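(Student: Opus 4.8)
The plan is to prove the conjecture by producing, for every $k\ge 3$, an explicit \emph{witness of deflatability} for $\Av(\pi_k)$, where $\pi_k = 2\,4\,\cdots\,(2k)\,1\,3\,\cdots\,(2k-1)$ is the parallel alternation of length $2k$. Every witness already exhibited for a parallel alternation — the entry $4\,7\,2\,9\,11\,5\,6\,1\,10\,3\,8$ for $\Av(246135)$ and the four witnesses displayed for $\Av(24681357)$, $\Av(2\,4\,6\,8\,10\,1\,3\,5\,7\,9)$, and so on — has the same shape: it contains a single increasing bond $b$ that is \emph{isolated}, meaning no other entry shares a row or a column with $b$, and the entire ``cross'' of that bond is blocked. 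So the target is a uniform family $\tau_k \in \Av(\pi_k)$ possessing an isolated increasing bond $b$ such that inserting a point into any cell of the two rows or two columns meeting $b$, other than the four cells adjacent to $b$, creates an occurrence of $\pi_k$. Lemma~\ref{lemma:witness-extension} then applies verbatim: $\tau_k$ has no simple extension in $\Av(\pi_k)$, hence $\Av(\pi_k)$ is deflatable.

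The construction I would attempt realizes $\tau_k$ as the isolated bond $b$ together with a constellation of points placed in the four open quadrants around $b$, the exact positions being given by explicit $k$-dependent formulas. The constellation must satisfy two opposing demands. First, $\tau_k$ must avoid $\pi_k$; since $\pi_k$ is simple it contains no bond, so any occurrence of $\pi_k$ in $\tau_k$ uses at most one point of $b$, and one arranges the quadrant points so that the remaining entries cannot on their own realize the rigid interleaving $\pi_k$ requires (an increasing run of length $k$ followed by an increasing run of length $k$, with the $j$th entry of the second run below the $i$th entry of the first exactly when $j\le i$). Second, for each admissible insertion cell $s$ in the cross of $b$, the constellation must already contain a copy of $\pi_k$ with one entry removed whose ``hole'' lies at $s$, so that the new point $x$ — possibly together with one point of $b$ acting as a pair of position-consecutive, value-nonconsecutive entries of one of the two runs — fills that hole. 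The admissible cells fall into $O(k)$ regimes according to where $x$ sits relative to $b$, so it suffices to install one such ``$\pi_k$-minus-a-point'' gadget per regime; because parallel alternations are so regular, these gadgets admit closed descriptions, most naturally through an induction that passes from $\tau_k$ to $\tau_{k+1}$ by appending a bounded block of entries which upgrades a $\pi_k$-blocking constellation to a $\pi_{k+1}$-blocking one.

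The main obstacle is not the logical framework — Lemma~\ref{lemma:witness-extension} reduces everything to the two verifications above — but carrying them out uniformly in $k$. Making the constellation dense enough that \emph{every} insertion into the cross completes a copy of $\pi_k$ tends to force a copy of $\pi_k$ into $\tau_k$ outright, and loosening it to avoid $\pi_k$ tends to leave some cross cell unblocked; threading this needle simultaneously for all $k$ is the real work. I expect this to require maintaining, through the induction on $k$, an invariant that records exactly which ``$\pi_j$-minus-a-point'' partial patterns appear in each quadrant of $\tau_j$ and in what relative positions — strong enough both to survive the appending of the bounded upgrading block and to yield, at every stage, avoidance of $\pi_k$ together with blocking of every admissible cross cell of $b$. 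With such an invariant established the conclusion is immediate from Lemma~\ref{lemma:witness-extension}, and the base case $k=3$ is already supplied by the tabulated witness $4\,7\,2\,9\,11\,5\,6\,1\,10\,3\,8$ for $\Av(246135)$.
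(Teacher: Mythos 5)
The statement you are addressing is stated in the paper as a \emph{conjecture}, not a theorem: the authors prove deflatability only for the specific parallel alternations of lengths $6$, $8$, $10$, $12$, and $14$ by exhibiting one explicit witness for each, and they leave the general case open. Your write-up does not close that gap. What you have produced is a proof \emph{strategy} --- and, to be fair, almost certainly the strategy the authors themselves had in mind, since every tabulated witness does contain an isolated bond to which Lemma~\ref{lemma:witness-extension} applies --- but the entire mathematical content of a proof is deferred. You never define the family $\tau_k$: no formula for the positions of the ``constellation'' points, no statement of the invariant your induction is supposed to maintain, and no verification of either of the two properties everything hinges on (that $\tau_k$ avoids $\pi_k$, and that every insertion into the cross of the bond outside the four adjacent cells creates a copy of $\pi_k$). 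You explicitly acknowledge that ``threading this needle simultaneously for all $k$ is the real work''; that work is exactly what a proof of the conjecture would consist of, and it is absent. A plan that says ``find a $\tau_k$ with properties A and B, then apply Lemma~\ref{lemma:witness-extension}'' restates the problem rather than solving it.

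Two further points to keep in mind if you pursue this. First, the tension you identify is real and is why this is a conjecture: the sizes of the known witnesses grow roughly linearly faster than $|\pi|$ (lengths $11$, $18$, $24$, $29$, $35$ against basis lengths $6$, $8$, $10$, $12$, $14$), and they do not visibly arise from one another by ``appending a bounded block,'' so your proposed induction step would need to be reverse-engineered from data that does not obviously support it. Second, your family $\pi_k = 2\,4\cdots(2k)\,1\,3\cdots(2k-1)$ covers only even-length parallel alternations of one symmetry type; the conjecture as stated concerns all parallel alternations of length at least $6$, including those of odd length, so even a complete argument for your $\pi_k$ would prove only a special case.
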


There is one parallel alternation (up to symmetry) of length less than $6$: the permutation $2413$. We show in Section~\ref{section:concl} that $\Av(2413)$ is not deflatable.

We conclude this section by generalizing the deflatable class $\Av(251364)$ to an infinite family of deflatable classes. Set $\pi = 251364$ and consider the inflation $\pi^* = \pi[1,\theta, 1, 1, 1, 1]$ for any permutation $\theta$. Set $\omega = 25173486$ (the witness of deflatability for the class $\Av(\pi)$) and further define $\omega^* = \omega[1,\theta,1,\theta,1,1,1,1]$ for the same $\theta$ as before. Both $\pi^*$ and $\omega^*$ are shown in Figure~\ref{figure:pi-omega}.

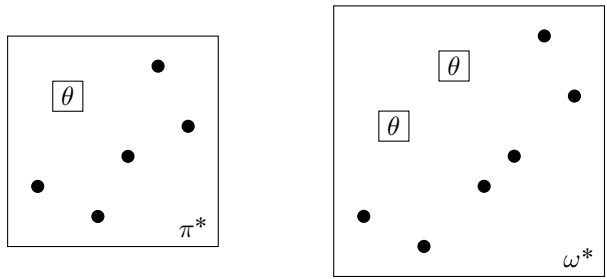
\begin{figure}
	\begin{center}
		\begin{tikzpicture}[scale=.4, baseline=(current bounding box.center)]
			\draw (0,0) rectangle (7,7);
			\draw[black, fill=black] (1,2) circle (0.2);
			\draw (1.5,4.5) rectangle (2.5, 5.5) node[midway] {$\theta$};
			\draw[black, fill=black] (3,1) circle (0.2);
			\draw[black, fill=black] (4,3) circle (0.2);
			\draw[black, fill=black] (5,6) circle (0.2);
			\draw[black, fill=black] (6,4) circle (0.2);
			\node[above left] at (7,0) {$\pi^*$};
		\end{tikzpicture}
		\qquad\qquad
		\begin{tikzpicture}[scale=.4, baseline=(current bounding box.center)]
			\draw (0,0) rectangle (9,9);
			\draw[black, fill=black] (1,2) circle (0.2);
			\draw (1.5,4.5) rectangle (2.5, 5.5) node[midway] {$\theta$};
			\draw[black, fill=black] (3,1) circle (0.2);
			\draw (3.5,6.5) rectangle (4.5, 7.5) node[midway] {$\theta$};
			\draw[black, fill=black] (5,3) circle (0.2);
			\draw[black, fill=black] (6,4) circle (0.2);
			\draw[black, fill=black] (7,8) circle (0.2);
			\draw[black, fill=black] (8,6) circle (0.2);
			\node[above left] at (9,0) {$\omega^*$};
		\end{tikzpicture}
		\caption{The permutations $\pi^*$ (on the left) and $\omega^*$ (on the right).}
	\label{figure:pi-omega}
	\end{center}
\end{figure}

It is fairly straight-forward to see that $\omega^* \in \Av(\pi^*)$, and it is routine to check that any one-point extension of $\omega^*$ by an entry $x$ which splits the interval formed by the entries $3$ and $4$ (without becoming a part of this interval) contains $\pi^*$. Hence, $\Av(\pi^*)$ is deflatable, proving the following theorem.

\begin{theorem}
	There are infinitely many deflatable principal classes.
\end{theorem}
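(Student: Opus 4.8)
The plan is to exhibit, for each permutation $\theta$, a single class $\Av(\pi^*)$ together with a concrete witness of deflatability $\omega^*$, and to argue that as $\theta$ ranges over all permutations we obtain infinitely many distinct such classes. The construction is already set up in the text: $\pi = 251364$, $\pi^* = \pi[1,\theta,1,1,1,1]$, $\omega = 25173486$, and $\omega^* = \omega[1,\theta,1,\theta,1,1,1,1]$. First I would verify that $\omega^* \in \Av(\pi^*)$. Since $\omega \in \Av(\pi)$ and $\pi$ is simple, any occurrence of $\pi^*$ in $\omega^*$ would have to map the (unique, by the Albert--Atkinson proposition) simple quotient $\pi = 251364$ of $\pi^*$ into the simple quotient $\omega = 25173486$ of $\omega^*$, in a way compatible with the inflations; since $\pi \not\le \omega$, there is no such occurrence. (One has to be slightly careful because $\theta$ itself might be simple of large length and create new simple quotients, but the clean way is: an occurrence of $\pi^*$ in $\omega^*$ restricted to the entries playing the roles of the singleton points of $\pi$ would give an occurrence of $251364$ in $25173486$, which is impossible; alternatively check directly that $\pi^* \not\le \omega^*$ by the block structure.)

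Next I would establish that $\omega^*$ is a witness of deflatability for $\Av(\pi^*)$, i.e.\ that $\omega^*$ cannot be extended to a simple permutation inside $\Av(\pi^*)$. The entries $5$ and $6$ of $\omega$ (the ``$3$'' and ``$4$'' of the displayed labelling in Figure~\ref{figure:pi-omega}) form an increasing bond in $\omega^*$, and the goal is to apply the idea behind Lemma~\ref{lemma:witness-extension}: show that no point can be inserted strictly between these two bonded entries, horizontally or vertically, without creating $\pi^*$, except possibly in the four squares immediately adjacent to the bond — and then argue that those four adjacent insertions leave the bond (or an enlargement of it) as a non-trivial interval. Concretely, one checks the finitely many regions of $\omega^*$ determined by the rows and columns through the bonded pair: for each such region, inserting a point there completes a copy of $\pi^* = 251364[1,\theta,\dots]$ using the surrounding skeleton of $\omega^*$ together with the inserted point. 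Because $\omega^* = \omega[1,\theta,1,\theta,1,1,1,1]$, every copy of $\pi$ that one builds out of $\omega$'s skeleton automatically carries along the two $\theta$-blocks in the right positions, so a copy of $\pi$ in the extended skeleton upgrades to a copy of $\pi^*$. This is the step the paper signals as ``routine to check'', so I would organize it as a short case analysis on where the new point $x$ lies relative to the bond, rather than grinding every coordinate.

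Having shown $\omega^*$ is a witness, $\Av(\pi^*)$ is deflatable for every $\theta$. Finally I would argue there are infinitely many distinct classes among $\{\Av(\pi^*) : \theta \text{ a permutation}\}$. The point is that $\theta \le \pi^* = 251364[1,\theta,1,1,1,1]$ as a subpermutation (it is literally a block), so $|\pi^*| = |\theta| + 5$, and distinct values of $|\theta|$ give basis elements of distinct lengths, hence distinct principal classes. (Even restricting to $\theta$ increasing, say $\theta = 12\cdots k$, already yields one new class for each $k$.) Therefore there are infinitely many deflatable principal classes, completing the proof.

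The main obstacle I anticipate is the verification in the second paragraph that \emph{every} legal one-point extension of $\omega^*$ splitting the $\{3,4\}$-interval introduces $\pi^*$: one must make sure the case analysis is exhaustive over the placement of the new point and that in each case the ambient entries of $\omega^*$ genuinely supply the remaining four singleton points of $\pi^*$ in the correct relative order, with the $\theta$-blocks landing where $\pi^*$ needs them. The cleanest route is probably to first do this for the base case $\theta = 1$ (so $\pi^* = \pi$, $\omega^* = \omega$, which is exactly the already-asserted witness in Figure~\ref{figure:25173486}) and then observe that inflating both $\pi$ and $\omega$ by the same $\theta$ in matching positions preserves all the containment relations used in that base-case argument — turning the general case into a formal consequence of the $\theta = 1$ case rather than a fresh computation.
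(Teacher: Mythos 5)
Your proposal is correct and follows essentially the same route as the paper: the same construction $\pi^* = 251364[1,\theta,1,1,1,1]$ with witness $\omega^* = 25173486[1,\theta,1,\theta,1,1,1,1]$, the same appeal to Lemma~\ref{lemma:witness-extension} via the bond formed by the entries $3$ and $4$, and the observation that varying $|\theta|$ yields infinitely many distinct principal classes. Your added remarks --- being careful that block structure alone does not immediately give $\omega^*\in\Av(\pi^*)$, and reducing the witness verification to the $\theta=1$ case --- only flesh out steps the paper labels ``straight-forward'' and ``routine.''
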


\section{Open Questions}
\label{section:concl}

Although we have shown that there are both infinitely many deflatable principal classes and infinitely many non-deflatable principal classes, the task of classifying exactly which principal classes are deflatable, to say nothing of non-principal classes, remains unfinished. The theorems proved in Section~\ref{section:notdef} combine to prove the non-deflatability of all classes $\Av(\pi)$ for $|\pi| = 4$ (up to symmetry) with the exception of $\Av(2413)$, a special case which we now prove.

\begin{proposition}
	\label{prop:2413}
	The principal class $\Av(2413)$ is not deflatable.
\end{proposition}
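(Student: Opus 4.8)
The plan is to show that $\Av(2413)$ is extendible, and then invoke Lemma~\ref{lemma:extendible}. By Lemma~\ref{lemma:embed-indec}, $2413$ is not on the exceptional list, so it suffices to handle indecomposable, non-simple $\omega \in \Av(2413)$. Fix a longest maximal interval $\alpha$ of $\omega$ with corner regions $\beta,\gamma,\delta,\epsilon$ as in Figure~\ref{figure:typical-omega}. We must produce a one-point extension $\omega^+$ by a point $x$ cutting $\alpha$ (with $\alpha\cup\{x\}$ not an interval) such that $\omega^+ \in \Av(2413)$; Lemma~\ref{lemma:MI} then does the bookkeeping. Since $2413$ is simple and has no corner point, the setup from the start of Section~\ref{section:notdef} does not directly apply, so the main work is a case analysis driven by which of $\beta,\gamma,\delta,\epsilon$ are empty; note that by indecomposability not all four can be empty, and we may exploit the eight-fold symmetry of $2413$ (which is centrally symmetric and closed under inverse, so its symmetry group is small — essentially generated by one reflection and one rotation), cutting the number of genuinely distinct configurations substantially.

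The key observation to use repeatedly is that any occurrence of $2413$ in $\omega^+$ that is \emph{new} must use $x$, and because $2413$ is simple, the three entries of $\omega$ playing the other three roles cannot all lie inside $\alpha$ (they would then, together with $x$ cutting $\alpha$, be confined in a way that forces a $12$ or a monotone pattern rather than $2413$) — more precisely, at most one entry of $\alpha$ can participate, since $\alpha$ is an interval and the four roles of $2413$ are pairwise "separated" in both coordinates. So a new occurrence of $2413$ using $x$ must draw at least two of its remaining three points from the corner regions. I would choose the placement of $x$ so that the available corner regions simply cannot supply the required sub-pattern. For instance, if we place $x$ immediately adjacent to $\alpha$ on, say, the left side and at a height strictly between two consecutive values of $\alpha$ that are "framed" on both sides (using a point of $\gamma$ or $\epsilon$ to witness the cut), then $x$ lies deep enough inside the bounding box of $\omega$ that the only roles it could play in $2413$ are the "$4$" (the top) or the "$1$" (the bottom), and in each case the forced positions of the other three points land in regions we have arranged to be empty or to avoid the needed pattern.

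The detailed bookkeeping splits by cases: (i) at least one of $\gamma,\epsilon$ is non-empty and at least one of $\beta,\delta$ is non-empty — here there is a point to the side of $\alpha$ and a point above/below it, giving plenty of room to place $x$ adjacent to $\alpha$ so that $\alpha\cup\{x\}$ is cut by an existing point, while $x$'s value/position traps it away from the three extreme roles; (ii) exactly one corner region is non-empty (so $\omega$ is, up to symmetry, of a very restricted shape, e.g.\ $\gamma$ alone non-empty forces $\alpha$ to sit in a corner and $\omega$ to be skew- or sum-decomposable unless $\gamma$ is positioned just so) — these degenerate shapes either contradict indecomposability or admit an obvious safe split; (iii) two non-adjacent corner regions non-empty, etc. In each sub-case one exhibits a concrete $x$ and argues, exactly as in the proofs of Theorems~\ref{theorem:three-comp-sum}--\ref{theorem:1...2}, that a hypothetical new $2413$ forces one of its four points into an empty region or forces a monotone relation where $2413$ demands otherwise.

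I expect the main obstacle to be the degenerate configurations where $\omega$ has only one or two non-empty corner regions: there the interval $\alpha$ is nearly a corner block, the room to insert a cutting point without either recreating the interval or creating $2413$ is tight, and one may need to first argue (using indecomposability plus the non-existence of a corner point in $2413$) that such $\omega$ cannot actually occur, or handle them by a bespoke insertion. The generic case (i) should be routine once the placement is pinned down; verifying that $\alpha\cup\{x\}$ fails to be an interval is immediate from the presence of a separating corner point, and the avoidance check is a short argument about where the three non-$x$ roles of $2413$ must go.
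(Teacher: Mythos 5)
There is a genuine gap here: what you have written is a programme rather than a proof. You never fix a concrete insertion point $x$, and you explicitly concede that the degenerate configurations (few non-empty corner regions) may require either a non-existence argument or a ``bespoke insertion'' that you do not supply. Worse, the structural claim on which your reduction rests --- that at most one entry of $\alpha$ can participate in a new occurrence of $2413$, so that at least two of the three non-$x$ points must come from the corner regions --- is false. In $2413$ the pair consisting of the ``$2$'' and the ``$1$'' (values $2$ and $1$, positions $1$ and $3$) is cut \emph{only} by the ``$4$''; so if $x$ plays the role of the ``$4$'', both the ``$2$'' and the ``$1$'' can lie inside the interval $\alpha$ without any point of $\omega$ cutting $\alpha$. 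Your claim that the four roles are ``pairwise separated in both coordinates'' is exactly what fails for a pair of entries that are contiguous in value, and with it the case analysis by empty corner regions loses its footing.

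The idea your proposal is missing is the one that makes the paper's argument work in a single stroke: exploit the invariance of $2413$ under rotation. If, in every one of the four rotations of $\omega$, the entry immediately following $\alpha$ by position had value \emph{below} all of $\alpha$, those four entries would themselves form a $2413$ in $\omega$; hence in some rotation the entry $d$ immediately following $\alpha$ lies above $\alpha$. One then inserts $x$ just above the bottommost entry $a$ of $\alpha$ and just to the right of $d$, and kills each of the four possible roles of $x$ by a substitution argument ($a$ substitutes for $x$ when $x$ would be the ``$2$'' or the ``$3$''; the rightmost separator $d'$ substitutes when $x$ would be the ``$4$''; and the ``$1$'' case produces a $2413$ among $d$, $d'$ and two further entries). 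Without an orientation-fixing step of this kind, and without a verified substitution argument for each role, your sketch does not establish extendibility.
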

\begin{proof}
	Let $\omega \in \Av(2413)$ be indecomposable and not simple. Let $\alpha$ be a longest maximal interval.
	
	We would like to make the assumption that the entry immediately following $\alpha$ by position has value greater than all entries in $\alpha$. Since $2413$ is invariant under all rotations, we can consider an appropriate rotation of $\omega$ so that this is true, for if all rotations had the property that the entry immediately following $\alpha$ in position had value less than all entries of $\alpha$, it would follow that $\omega$ contained an occurrence of $2413$ formed by these four entries (one for each rotation).
	
	Thus, we can assume without loss of generality that the entry immediately following $\alpha$ by position, which we denote by $d$, has value greater than all entries in $\alpha$. Define $d'$ to be the rightmost entry of $\omega$ which separates $\alpha$ from $d$. If there is no such entry, set $d' = d$. Let $a$ be the bottommost entry of $\alpha$. 
	
	Form $\omega^+$ by inserting an entry $x$ into $\omega$ that lies just above $a$ and just to the right of $d$, as in Figure~\ref{figure:2413-1}. We need to show that $\omega^+ \in \Av(2413)$, so suppose toward a contradiction that the entry $x$ plays a role in an occurrence of $\pi$.

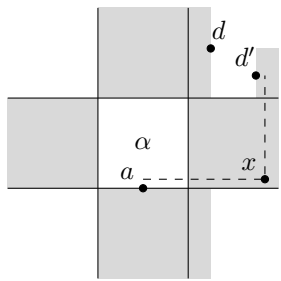
\begin{figure}
	\begin{center}
		\begin{tikzpicture}[scale = .6]
			\draw[draw=none, fill=black!15] (2,0) rectangle ++(2,2);
			\draw[draw=none, fill=black!15] (0,2) rectangle ++(2,2);
			\draw[draw=none, fill=black!15] (2,4) rectangle ++(2,2);
			\draw[draw=none, fill=black!15] (4,2) rectangle ++(2,2);
			\draw[draw=none, fill=black!15] (4,4) rectangle ++(.5,2);
			\draw[draw=none, fill=black!15] (4,0) rectangle ++(.5,2);
			\draw[draw=none, fill=black!15] (5.5,4) rectangle ++(.5,1.1);
			\foreach \i in {2,4} {
				\draw (\i,0) -- (\i,6);
				\draw (0,\i) -- (6,\i);
			}
			\draw[fill=black] (5.7, 2.2) circle (.08) node[left=6pt, above] {$x$};
			\draw[fill=black] (4.5, 5.1) circle (.08) node[right=3pt, above] {$d$};
			\draw[fill=black] (3, 2) circle (.08) node[left=6pt, above] {$a$};
			\draw[fill=black] (5.5, 4.5) circle (.08) node[left=4pt, above] {$d'$};
			\draw[dashed] (3,2.2) -- (5.7,2.2) -- (5.7, 4.5);
			\node at (3,3) {$\alpha$};
		\end{tikzpicture}
		\caption{The permutation diagram of $\omega^+$ in Proposition~\ref{prop:2413}.}
		\label{figure:2413-1}
	\end{center}
\end{figure}
	
	If $x$ played the role of the $2$ in an occurrence of $2413$, then $a$ substitutes for $x$. If $x$ played the role of the $4$ in an occurrence of $2413$, then $d'$ substitutes for $x$. If $x$ plays the role of the $3$ in an occurrence of $2413$, then the $1$ must lie to the left of $\alpha$ (otherwise it acts as the $1$ in an occurrence of $2413$ using $a$, $d$, and $d'$), and so $a$ substitutes for $x$. 
	
	Therefore, $x$ must play the role of the $1$ in some occurrence of $2413$. The entry $d$ cannot play the role of the $4$, because then there are no entries that can play the role of the $3$. The role of $4$ also cannot be played by any entry to the left of $\alpha$, because then $a$ could substitute for $x$. Therefore, the role of $4$ must be played by an entry, say $y$, above $d$ and positionally between $d$ and $d'$. However, this would imply that the role of $3$ was played by an entry, say $z$ above $d$ and to the right of $x$. This in turn implies that $d$, $y$, $d'$, and $z$ form a copy of $2413$, a contradiction.	
\end{proof}

Of the classes $\Av(\pi)$ for $|\pi| = 5$, we have shown that $\Av(\pi)$ is not deflatable for all decomposable $\pi$. The remaining classes $\Av(\pi)$ to be checked are $\Av(25314)$, $\Av(24153)$, $\Av(23514)$, and $\Av(24513)$. Note that the former two bases consist of simple permutations while the latter two consist of inflations of $2413$.  This raises the following question: 

\begin{question}
	\label{question:len-5-def}
	Are the classes $\Av(25314)$, $\Av(24153)$, $\Av(23514)$, and $\Av(24513)$ deflatable?
\end{question}

It is already known that $\pi = 134652$ is a minimal length decomposable $\pi$ such that $\Av(\pi)$ is deflatable. The resolution to Question~\ref{question:len-5-def} would determine whether or not $\pi$ is a minimal length such $\pi$ among all permutations. Moreover, there are three other length $6$ decomposable permutations $\pi$ (up to symmetry) such that the deflatability of $\Av(\pi)$ is unknown. The answer to the Question~\ref{question:len-6-def} might be helpful in determining a more broad classification of deflatable and non-deflatable classes.

\begin{question}
	\label{question:len-6-def}
	Are the classes $\Av(146523)$, $\Av(154623)$, and $\Av(164532)$ deflatable?
\end{question}

The reader may have noticed that, despite proving that many principal classes contain simple permutations which are actually contained in a proper subclass, we have not once specified what that proper subclass is. This is not out of neglect; rather, the only way currently known to calculate the smallest proper subclass $\D \subset \C$ for which $ \C  \subseteq \langle \D \rangle$ is by direct calculation. For the time being, computational power is not sufficient to perform this calculation for the classes in question.

\bigskip

\noindent{\bf Acknowledgments:} The authors are grateful to Vince Vatter for participating in discussions which furthered this research. In particular he was in part responsible for the original proof of Proposition \ref{prop:2413} which convinced us that ``except in trivial cases principal classes aren't deflatable'' was perhaps not as obvious or as easy as one might initially think -- and indeed of course we now know it to be false. Additionally, Cheyne Homberger and Jay Pantone wish to thank Michael Albert and Mike Atkinson for their hospitality at the University of Otago in March and April of 2014.
\bibliographystyle{acm}

\bibliography{../../../../refs/library}

\end{document}